\title{Extensions of categoricity relative to a degree}
\author{Java Darleen Villano}
\date{\today}
\subjclass[2020]{03C57, 03D25}
\thanks{The author would like to thank their adviser Reed Solomon for providing many helpful comments in the preparation of this paper. The author was partially supported by a Focused Research Group grant from the National Science Foundation of the United States, DMS-1854355.}
\keywords{Computable structure theory, computable categoricity, relative computable categoricity, computable categoricity relative to a degree}
\address{Department of Mathematics\\ University of Connecticut\\ Storrs, Connecticut 06269}
\email{javavill@uconn.edu}
\newcommand{\NN}{\mathbb{N}}
\newcommand{\A}{\mathcal{A}}
\newcommand{\G}{\mathcal{G}}
\newcommand{\B}{\mathcal{B}}
\renewcommand{\epsilon}{\varepsilon}
\newcommand{\<}{\langle}
\renewcommand{\>}{\rangle}
\newcommand{\lessT}{<_{\text{T}}}
\renewcommand{\phi}{\varphi}
\newcommand{\age}{\mathrm{age}}
\newtheorem{theorem}{Theorem}[section]
\newtheorem{lemma}[theorem]{Lemma}
\newtheorem{question}[theorem]{Question}
\theoremstyle{definition}
\newtheorem{definition}[theorem]{Definition}
\numberwithin{equation}{section}
\def\XXint#1#2#3{{\setbox0=\hbox{$#1{#2#3}{\int}$ }
\vcenter{\hbox{$#2#3$ }}\kern-.6\wd0}}
\begin{document}
\begin{abstract}
In this paper, we apply the machinery developed in \cite{villano2024computable} to study the behavior of computable categoricity relativized to non-c.e.\ degrees. In particular, we show that we can build a computable structure which is not computably categorical but is computably categorical relative to a $1$-generic degree. Additionally, we show that other classes of structures besides directed graphs admit a computable example which can change its computable categorical behavior relative to different degrees.
\end{abstract}
\maketitle



\section{Introduction}
In computable structure theory, we are interested in effectivizing model theoretic notions and constructions. For a general background on computable structure theory, see Ash and Knight \cite{Ash2000-ASHCSA} or Montalb{\' a}n \cite{Montalban2021}. In particular, many people have examined the complexity of isomorphisms between structures within the same isomorphism type. We restrict ourselves to countable structures in a computable language and assume their domain is $\omega$. 

A computable structure $\mathcal{A}$ is \textit{computably categorical} if for any computable copy $\mathcal{B}$ of $\mathcal{A}$, there exists a computable isomorphism between $\mathcal{A}$ and $\mathcal{B}$. In this paper, we continue the investigation started in \cite{MR4291596} and \cite{villano2024computable} into the following relativized version of computable categoricity. For $X\in2^\NN$, we say that a computable structure $\mathcal{A}$ is \textit{computably categorical relative to} $X$ if for all $X$-computable copies $\mathcal{B}$ of $\mathcal{A}$, there exists an $X$-computable isomorphism between $\mathcal{A}$ and $\mathcal{B}$. 

By relativized versions of results due to Ash, Knight, Manasse, and Slaman \cite{ASH1989195}, Chisholm \cite{Chisholm90}, and Goncharov \cite{MR622606}, we have for every computable structure $\mathcal{A}$ that this relativization stabilizes on a cone above $\mathbf{0}''$. That is, either $\mathcal{A}$ is computably categorical relative to all degrees above $\mathbf{0}''$, or to no degree above $\mathbf{0}''$. Downey, Harrison-Trainor, Melnikov in \cite{MR4291596} and the author in \cite{villano2024computable} later showed that in the c.e.\ degrees, this relativization behaves chaotically in the following sense.

\begin{theorem}[Downey, Harrison-Trainor, Melnikov \cite{MR4291596}]\label{DHM result}
    There is a computable structure $\A$ and c.e. degrees $0=Y_0\lessT X_0\lessT Y_1\lessT X_1\lessT\dots$ such that
    \begin{itemize}
        \item[(1)] $\A$ is computably categorical relative to $Y_i$ for each $i$,
        \item[(2)] $\A$ is not computably categorical relative to $X_i$ for each $i$,
        \item[(3)] $\A$ is relatively computably categorical to $\mathbf{0'}$.
    \end{itemize}
\end{theorem}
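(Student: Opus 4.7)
My plan is a finite-injury priority construction producing simultaneously the computable directed graph $\A$, the c.e.\ sets $X_i, Y_i$ in the required Turing order, and, for each $i$, an $X_i$-computable bad copy $\B_i$ of $\A$. I would take $\A$ to be an infinite disjoint union of finite ``gadgets'' organized into blocks indexed by the level $i$. Each gadget comes in two isomorphism types controlled by a single coding bit; that bit is enumerated during the construction into $X_i$ and, after a delay, into $Y_{i+1}$, while $Y_i$ is kept ignorant of it. Under this permitting pattern, $\0'$ uniformly decodes all coding bits, $Y_{i+1}$ decodes the bits used at level $i$, $X_i$ decodes the bits at its own level as they appear, but $Y_i$ cannot.

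At each level $i$ three groups of requirements are met. The negative requirements $N_e^i$, acting on $\B_i$, defeat $\Phi_e^{X_i}$ as an isomorphism $\B_i\to\A$: reserve a fresh gadget pair, wait for $\Phi_e^{X_i}$ to commit on that pair, and then enumerate into $X_i$ to flip the $\A$-side gadget, so that $\Phi_e^{X_i}$ is seen to fail. The positive requirements $P^i$ witness categoricity relative to $Y_i$: any $Y_i$-computable copy $\mathcal{C}$ of $\A$ cannot, by the permitting design, reproduce the level-$i$ shape data that $\B_i$ exploits, so matching components of $\mathcal{C}$ to components of $\A$ reduces to a uniformly $Y_i$-computable block-by-block search. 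The coding requirements handle the Turing inequalities $\0\lessT X_0\lessT Y_1\lessT X_1\lessT\cdots$ by standard diagonalization and code $X_i$ into $Y_{i+1}$ to achieve $X_i\lessT Y_{i+1}$. Finally, $\A$ is computably categorical relative to $\0'$ because $\0'$ uniformly computes every $X_i$ and $Y_i$ and can thus run the $Y_i$-matching procedure globally on any $\0'$-computable copy.

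The main obstacle is the interaction of diagonalizations across levels: an enumeration into $X_i$ by some $N_e^i$ must neither corrupt a previously certified $Y_j$-matching for $j\le i$ nor accidentally supply a uniformly $X_i$-computable resolution of the ambiguity that would render $\A$ \emph{categorical} relative to $X_i$ after all. This is managed by a strict priority ordering on gadget allocation, by large-gap spacing ensuring each gadget is modified only finitely often, and by a careful choice of coding so that the ambiguity witnessing non-categoricity at level $i$ is genuinely unresolvable from $X_i$ alone at the moment of diagonalization. The standard true-path argument then verifies that each requirement is satisfied and that the final c.e.\ sets realize the desired Turing order.
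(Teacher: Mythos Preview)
The paper does not prove this theorem; it is quoted as a result of Downey--Harrison-Trainor--Melnikov and used only as background, so there is no proof in the paper to compare your proposal against.

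On its merits, your sketch has a genuine gap on the positive side. You assert that ``any $Y_i$-computable copy $\mathcal{C}$ of $\A$ cannot, by the permitting design, reproduce the level-$i$ shape data that $\B_i$ exploits,'' but this is not correct: a $Y_i$-computable copy is simply any $Y_i$-computable structure isomorphic to $\A$, and nothing about how you built $Y_i$ constrains what such a copy looks like (indeed $\A$ itself is a $Y_i$-computable copy and carries all the level-$i$ shape data). To make $\A$ computably categorical relative to $Y_i$ you must, for every index $j$, actively construct a $Y_i$-computable isomorphism $f_j^{Y_i}:\A\to\mathcal{M}_j^{Y_i}$ whenever $\mathcal{M}_j^{Y_i}\cong\A$. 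The mechanism that makes this possible --- visible in the $S$-strategies of the present paper and in the original DHM construction --- is to define $f_j^{Y_i}$ with explicit uses, and whenever a diagonalization step alters a gadget in $\A$, to \emph{enumerate the corresponding use into $Y_i$} so that the now-incorrect piece of $f_j^{Y_i}$ is destroyed and can be redefined correctly. Your ``permitting'' description runs in the wrong direction and does not provide this repair step; without it, a single gadget flip after $f_j^{Y_i}$ has committed leaves an incorrect $Y_i$-map you cannot fix. The cross-level interaction you flag at the end is a real issue, but it is downstream of this missing ingredient: the priority machinery only becomes meaningful once the basic $S$-type strategy and its use-enumeration repair are in place.
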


\begin{theorem}[\cite{villano2024computable}]
    Let $P=(P,\leq)$ be a computable partially ordered set and let $P=P_0\sqcup P_1$ be a computable partition. Then, there exists a computable computably categorical directed graph $\G$ and an embedding $h$ of $P$ into the c.e.\ degrees where $\G$ is computably categorical relative to each degree in $h(P_0)$ and is not computably categorical relative to each degree in $h(P_1)$.
\end{theorem}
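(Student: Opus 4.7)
The plan is to prove the theorem in two coupled stages: first realizing $P$ as an order-preserving and order-reflecting image in the c.e.\ degrees, and then constructing a single computably categorical directed graph $\G$ whose relative computable categoricity behavior varies between the two pieces of the partition.

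For the embedding, I would invoke (or carry out) a Sacks/Thomason-style priority construction producing c.e.\ sets $\{A_p\}_{p\in P}$ with $A_p\leq_T A_q$ if and only if $p\leq q$, and let $h(p)$ be the Turing degree of $A_p$. This step is essentially orthogonal to the categoricity construction and lets me work henceforth with fixed oracles $A_p$ rather than with abstract degrees.

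For the graph, I would follow the Downey--Harrison-Trainor--Melnikov template and build $\G$ out of ``coding modules'' $M_p$, one per $p\in P$, whose available symmetries are governed by $A_p$. Two families of requirements drive the design. For each $p\in P_1$, I would diagonalize against every $A_p$-computable functional $\Phi_e^{A_p}$ by constructing an $A_p$-computable copy $\B_p$ of $\G$ in which a finite symmetry-breaking choice inside $M_p$ is made using $A_p$'s enumeration in a way that prevents any $\Phi_e^{A_p}$ from yielding an isomorphism $\G\to\B_p$. For each $p\in P_0$, I would show that every $A_p$-computable copy of $\G$ admits an $A_p$-computable isomorphism via an $A_p$-oracle back-and-forth, by establishing that the potential symmetries of each module are enumerable from $A_p$ in a manner that permits uniform resolution of orbit-membership. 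Keeping $\G$ itself computably categorical forces the modules to be rigid at the purely computable level: all ambiguity must be hidden behind c.e.\ oracles and invisible to computable functions with no oracle.

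The main obstacle, I expect, will be coordinating the modules against the partial-order structure, especially for incomparable elements. For $p\in P_0$ and $q\in P_1$ with $q\not\leq p$, the diagonalization witnessing non-categoricity relative to $h(q)$ must not create any obstruction that some clever $A_p$-computable procedure could exploit to break categoricity at $h(p)$. Moreover, the positive requirements (computable categoricity relative to $h(p)$ for $p\in P_0$) must hold against \emph{every} $A_p$-computable copy, not only ones we construct, which is the subtler obligation to meet. I anticipate a tree-based priority argument indexed by a computable enumeration of the principal ideals of $P$, with priorities arranged so that the positive back-and-forth requirements can continually monitor and bound what the negative diagonalizations are permitted to do inside each module.
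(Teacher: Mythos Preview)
This theorem is cited from \cite{villano2024computable} and is not proved in the present paper, so there is no proof here to compare against directly. Nevertheless, the machinery in Sections~2--3 (which adapts \cite{villano2024computable} to the $1$-generic setting) makes the shape of the original argument clear, and your proposal contains a genuine gap.

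The problem is your claim that embedding $P$ into the c.e.\ degrees is ``essentially orthogonal'' to the graph construction and can be carried out first, leaving you to work with fixed oracles $A_p$. It cannot. In the actual construction the sets $A_p$ and the graph $\G$ are built simultaneously: the positive requirements (categoricity relative to $h(p)$ for $p\in P_0$) are met by defining partial $A_p$-computable embeddings $f^{A_p}$ from $\G$ into arbitrary $A_p$-computable copies, while the negative requirements add diagonalizing loops to specific components of the single computable graph $\G$. Each such diagonalization invalidates already-defined values of the maps $f^{A_p}$ on those components, and the repair is to \emph{enumerate the use of the injured computation into $A_p$}, destroying that computation so that $f^{A_p}$ can be redefined correctly---precisely the ``challenge'' mechanism described in Section~2.2 of this paper. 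With the $A_p$ fixed in advance you have no control over their enumeration and cannot perform this repair, so your positive requirements will fail as soon as a diagonalization for some $q\in P_1$ touches a component on which $f^{A_p}$ was already defined. Your module picture, in which the diagonalization for $q$ lives inside an isolated $M_q$, misreads the conflict: the diagonalizing loops alter the computable graph $\G$ itself and are visible to every oracle. The resolution is not an orbit-membership back-and-forth over fixed sets but the permitting-style interaction above, which forces the embedding $h$ and the graph $\G$ to be built together in a single priority construction.
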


We now discuss some restrictions of the techniques in \cite{villano2024computable}. We first introduce the following definitions.

\begin{definition}
    A computable structure $\A$ is $\mathbf{d}$\textit{-computably categorical} if for all computable $\B\cong\A$, there exists a $\mathbf{d}$-computable isomorphism between $\A$ and $\B$.
\end{definition}

\begin{definition}
    A structure $\A$ has \textit{degree of categoricity} $\mathbf{d}$ if $\A$ is $\mathbf{d}$-computably categorical and for all $\mathbf{c}$, if $\A$ is $\mathbf{c}$-computably categorical, then $\mathbf{d}\leq\mathbf{c}$. A degree $\mathbf{d}$ is a \textit{degree of categoricity} if there is some structure with degree of categoricity $\mathbf{d}$.
\end{definition}

Finding a characterization of degrees of categoricity in the Turing degrees has recently been an active topic in computable structure theory. For a survey paper of development up until 2017, see Franklin \cite{franklin17}. Degrees which are \textit{not} degrees of categoricity exist, with Anderson and Csima producing several examples in \cite{AC2016}. One important example is the following.

\begin{theorem}[Anderson, Csima \cite{AC2016}]
    There is a $\Sigma_2^0$ degree that is not a degree of categoricity.
\end{theorem}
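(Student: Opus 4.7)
The plan is to construct, via a $\mathbf{0}'$-priority argument, a $\Sigma^0_2$ set $D$ whose Turing degree $\mathbf{d}$ fails to minimize the categoricity spectrum of any computable structure. Since a degree is a degree of categoricity if and only if it is the minimum of some such spectrum, this suffices. Concretely, I would meet, for each index $e$ of a computable structure $\A_e$, the requirement
\[
R_e\colon \text{either } \A_e \text{ is not } \mathbf{d}\text{-computably categorical, or } \A_e \text{ is } \mathbf{c}\text{-computably categorical for some } \mathbf{c}\not\geq_T\mathbf{d}.
\]

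To meet a single $R_e$, I would use $\mathbf{0}'$ to enumerate computable copies $\B$ of $\A_e$ together with candidate Turing functionals $\Phi_i$, and search for a pair on which $\Phi_i^D$ defines an isomorphism $\A_e\to\B$. If no such pair exists then the first alternative of $R_e$ holds vacuously. Otherwise, I would execute a splitting module that produces sets $A_e,B_e<_T D$ with $A_e\oplus B_e\equiv_T D$, and then argue, by analyzing the $D$-use of $\Phi_i^D$ along the isomorphism, that at least one side of the splitting already suffices to compute an isomorphism between $\A_e$ and every computable copy. The degree of that side then witnesses the second alternative of $R_e$, and this degree is strictly below $\mathbf{d}$ and therefore not $\geq_T\mathbf{d}$.

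The main obstacle is the conflict between the splittings required by different $R_e$: revising $D$ for the sake of a later requirement may wreck a splitting already committed to for an earlier one, and the cumulative modifications must keep $D$ in $\Sigma^0_2$. I would resolve this by a tree-of-strategies construction in which each node guesses the true convergence behavior of the relevant $\Phi_i^D$, with the splittings calibrated to those guesses so that only finitely many requirements act on the true path. The subtlest step, and the one I expect to be the main technical difficulty, is the structural lemma asserting that one side of any sufficiently generic splitting of $D$ already computes the needed isomorphism; this is where the specific recursive structure of $\A_e$ matters, and where a permitting-style argument relating the $\Phi_i^D$-uses to the bits assigned to $A_e$ versus $B_e$ will need to be carefully set up.
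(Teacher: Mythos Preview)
The paper does not prove this theorem; it is quoted from Anderson--Csima \cite{AC2016} as background. The paper does, however, indicate the method actually used: the $\Sigma^0_2$ degree they build is \emph{low for isomorphism}. That is, one constructs a noncomputable $\Sigma^0_2$ set $D$ such that whenever two computable structures are $D$-computably isomorphic they are already computably isomorphic. Any such $\mathbf{d}>\mathbf{0}$ is automatically not a degree of categoricity, since if $\A$ is $\mathbf{d}$-computably categorical then $\A$ is computably categorical, so $\mathbf{0}$ lies in the categoricity spectrum while $\mathbf{0}\not\geq\mathbf{d}$.

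Your proposal takes a genuinely different route, and the key step does not go through. You want, for each $e$ with $\A_e$ $\mathbf{d}$-computably categorical, to find a single degree $\mathbf{c}<\mathbf{d}$ such that $\A_e$ is $\mathbf{c}$-computably categorical. But $\mathbf{c}$-computable categoricity requires $\mathbf{c}$ to compute an isomorphism from $\A_e$ to \emph{every} computable copy, not just the one $\B$ and the one functional $\Phi_i$ your search turns up. Your splitting module, even if it succeeds in making one side of $A_e\oplus B_e$ recover the map $\Phi_i^D\colon\A_e\to\B$, gives you nothing for the infinitely many other copies $\B'$, each of which may need a different $D$-functional with uses distributed unpredictably across the two halves of the split. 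The ``structural lemma'' you flag as the hard part is not merely subtle; in the form you state it (one side of a generic splitting computes isomorphisms to all copies) there is no reason to expect it to hold, and the specific recursive structure of $\A_e$ is of no help since $\A_e$ is arbitrary.

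The low-for-isomorphism approach sidesteps this entirely: the requirements are indexed by pairs of computable structures rather than by single structures, and one only has to defeat a single putative $D$-computable isomorphism per requirement, not manufacture a degree below $\mathbf{d}$ that handles all copies at once.
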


In fact, the $\Sigma_2^0$ degree that they built to witness this result is low for isomorphism.

\begin{definition}
    A degree $\mathbf{d}$ is \textit{low for isomorphism} if for every pair of computable structures $\mathcal{A}$ and $\mathcal{B}$, $\A$ is $\mathbf{d}$-computably isomorphic to $\B$ if and only if $\A$ is computably isomorphic to $\B$.
\end{definition}

There are currently no known characterizations for LFI degrees, but they are connected to the generic degrees. We quickly recall the definition of an $n$-generic set.

\begin{definition}
    A set $A$ is $n$\textit{-generic} if for all $\Sigma_n^0$ set of strings $S\subseteq 2^{<\omega}$, there exists an $m$ such that either $A\restriction m\in S$ or for all $\tau\supseteq A\restriction m$, $\tau\not\in S$. A degree $\mathbf{d}$ is $n$\textit{-generic} if it contains an $n$-generic set.
\end{definition}

\begin{theorem}[Franklin, Solomon \cite{franklinsolomon2014}]
    Every $2$-generic degree is low for isomorphism.
\end{theorem}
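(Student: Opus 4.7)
The plan is to build a computable isomorphism $f \colon \mathcal{A} \to \mathcal{B}$ by effectivizing the back-and-forth method, using the $2$-genericity of $G$ to certify that the back-and-forth cannot run into a dead end. Let $\Phi$ be a Turing functional with $\Phi^G : \mathcal{A} \to \mathcal{B}$ an isomorphism, and let $T$ denote the computable tree of finite partial isomorphisms from $\mathcal{A}$ to $\mathcal{B}$ under extension.

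For each $p \in T$ and each $n \in \omega$, consider the c.e.\ set
\[
U_{p,n} = \bigl\{\sigma \in 2^{<\omega} : \Phi^\sigma \text{ is a partial isomorphism extending } p \text{ with } n \in \dom(\Phi^\sigma)\cap\rng(\Phi^\sigma)\bigr\}
\]
and the $\Sigma_2^0$ set $D_{p,n} = \{\tau : \tau \text{ has no extension in } U_{p,n}\}$. For each $p \subseteq \Phi^G$, since $\Phi^G$ is a total surjection onto $\mathcal{B}$, some initial segment of $G$ lies in $U_{p,n}$; hence $G$ cannot have an initial segment in $D_{p,n}$, and by $2$-genericity there is $\tau_{p,n} \prec G$ no extension of which lies in $D_{p,n}$. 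Every extension of $\tau_{p,n}$ therefore has a further extension in $U_{p,n}$---a density statement that is purely combinatorial in $\Phi, \mathcal{A}, \mathcal{B}$ and in particular does not mention $G$.

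Using these density conditions, I would run the back-and-forth computably: starting from $p_0 = \emptyset$ and a suitable $\tau_0$, at stage $s$ I search for $\sigma \succeq \tau_s$ lying in $U_{p_s, n_s}$, where the parameter $n_s$ alternates so as to place every element of $\mathcal{A}$ into $\dom$ and every element of $\mathcal{B}$ into $\rng$. Then set $p_{s+1} = \Phi^\sigma$ and $\tau_{s+1} = \sigma$. The density statement forces the search to terminate, and $f = \bigcup_s p_s$ is a computable total bijection preserving the atomic diagram, hence a computable isomorphism. The main obstacle is that the density condition is a priori only verified along the true $\Phi^G$-path, so the construction must not wander off into some $p$ for which density fails. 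The fix is to apply $2$-genericity uniformly to the countable family $\{D_{p,n}\}_{p \in T, n \in \omega}$: any $p$ that can actually arise during the construction is itself of the form $\Phi^\sigma$ for a string $\sigma$ extending the previously chosen $\tau$'s, and one shows by induction that such $p$ remain compatible with $G$ in the sense needed for the corresponding $\tau_{p,n}$ to exist. Formalizing this uniformity so that the resulting $f$ is genuinely computable (and not merely $G$-computable) is the delicate technical point of the proof.
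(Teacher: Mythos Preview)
The paper does not prove this theorem; it is quoted as a result of Franklin and Solomon and used only as background motivation. There is therefore no proof in the paper to compare against.

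As for your proposal on its own merits: you have the right overall strategy and you have correctly identified the central obstacle, but the ``fix'' you sketch does not close the gap. Once the construction leaves the true branch --- that is, once $\tau_s \not\subset G$ --- you no longer know that $p_s \subseteq \Phi^G$, and your density witnesses $\tau_{p_s,n_s}$, which are initial segments of $G$ and were produced only for $p \subseteq \Phi^G$, simply need not exist or need not lie below $\tau_s$. The inductive claim that the $p_s$ ``remain compatible with $G$'' has no force: nothing in the construction enforces it, and in general it is false.

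The standard repair is to collapse the family $\{D_{p,n}\}$ into a \emph{single} $\Sigma_2^0$ set
\[
W \;=\; \bigl\{\sigma : (\exists n)(\forall \sigma' \supseteq \sigma)\ \Phi^{\sigma'}\text{ is not a finite partial isomorphism with } n \in \dom(\Phi^{\sigma'}) \cap \rng(\Phi^{\sigma'})\bigr\},
\]
dropping the parameter $p$ entirely (it is redundant: if $\sigma' \supseteq \sigma$ then $\Phi^{\sigma'} \supseteq \Phi^{\sigma}$ automatically). Since $\Phi^G$ is a total surjective isomorphism, $G$ meets no $\sigma \in W$, so by $2$-genericity there is a single $\tau \subset G$ no extension of which lies in $W$. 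Below this one $\tau$ the density condition holds for \emph{every} $n$ and \emph{every} $\sigma \supseteq \tau$ simultaneously, with no further reference to $G$. Starting the construction at $\sigma_0 = \tau$ and always extending the oracle string, each search terminates by density, and $f = \bigcup_s \Phi^{\sigma_s}$ is the desired computable isomorphism. This is essentially Franklin and Solomon's argument.
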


This tells us that for a $2$-generic degree $\mathbf{d}$, there exists \textit{no} computable structure $\A$ where $\A$ is not computably categorical but is computably categorical relative to $\mathbf{d}$, one of the first examples of a degree where techniques from \cite{villano2024computable} do not apply. Here, we will show that this is optimal in the generic degrees, that is, we can build a $1$-generic degree $\mathbf{d}$ and a computable structure $\A$ which is not computably categorical but is computably categorical relative to $\mathbf{d}$. 

\begin{theorem}\label{thm: categoricity relative to a 1-generic}
    There exists a (properly) $1$-generic $G$ such that there is a computable directed graph $\A$ where $\A$ is not computably categorical but is computably categorical relative to $G$.
\end{theorem}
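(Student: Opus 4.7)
My plan is to simultaneously construct, through a finite-extension forcing argument, a $1$-generic real $G$ together with a computable directed graph $\A$ and a computable copy $\B \cong \A$ witnessing that $\A$ is not computably categorical. I would adapt the gadget framework from \cite{villano2024computable}, where $\A$ has a computable backbone uniformly identifying gadgets $A_e$ for $e \in \omega$, each having two possible ``configurations'' that are abstractly isomorphic but whose identification in a copy requires decoding a $\Sigma_1^0$ event. The central idea is to design the gadgets so that reading off the configuration of the $e$-th gadget in any copy $\mathcal{C}$ of $\A$ reduces to asking whether a specific $\Sigma_1^G$ event of the $1$-generic-decidable form (namely, whether $G$ extends some string in a uniformly chosen c.e.\ set $U_e$) holds; by $1$-genericity, $G$ decides such events.

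I would process three families of requirements in priority order. The diagonalization requirement $R_e$ ensures $\varphi_e$ is not an isomorphism $\A \to \B$: at its assigned stage, observe $\varphi_e$'s image on the distinguished point of $A_e$ and set the configurations of the relevant gadgets in $\A$ and $\B$ so as to force disagreement. The $1$-genericity requirement $P_e$ attacks the $e$-th uniformly enumerated $\Sigma_1^0$ set $V_e$ by extending the current initial segment of $G$ into $V_e$ if possible, or else certifying that no extension of it lies in $V_e$. The coordination requirement $Q_e$ picks the $\Sigma_1^0$ set $U_e$ uniformly during the construction so that the gadget configurations for $A_e$ and $B_e$ are bound to the same $1$-generic decision. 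Because each requirement acts at most once at a single stage, the construction is injury-free.

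To verify that $\A$ is $G$-computably categorical, given a $G$-computable copy $\mathcal{C}$, the backbone of $\mathcal{C}$ is uniformly $G$-computable, so $G$ locates each gadget $C_e$. The configuration of $C_e$ is then read by $G$ via a $\Sigma_1^G$ predicate which, by the design above, is exactly one that $G$ decides by $1$-genericity; matching the decoded configurations yields a $G$-computable isomorphism $\A \to \mathcal{C}$ uniformly in $e$. Non-categoricity is witnessed by $\B$ via the $R_e$ requirements. Moreover $G$ is automatically properly $1$-generic, since by the theorem of Franklin and Solomon any $2$-generic is low for isomorphism, contradicting the existence of the computable $\A$ we have built.

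The main obstacle is engineering the gadgets so that reading the configuration of a gadget in an arbitrary $G$-computable copy really does reduce to a $\Sigma_1^G$ event of the $1$-generic-decidable form, rather than a general $\Sigma_1^G$ event that could require $G'$. The directed-graph gadgets from \cite{villano2024computable} encode their configurations through the presence of a specific $\Sigma_1^0$ marker, and this is where the coordination sets $U_e$ enter: they are chosen exactly so that the marker appears inside $C_e$ iff $G$ extends a string in $U_e$, making the configuration readable by $G$. Standard tree-of-strings bookkeeping, choosing the $U_e$ with pairwise disjoint supports on $2^{<\omega}$, then ensures that the $Q_e$ and $P_e$ requirements do not interfere and all requirements are simultaneously satisfiable.
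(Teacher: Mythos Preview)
Your proposal rests on a false premise: $1$-genericity of $G$ does \emph{not} allow $G$ to compute whether $G$ meets a given c.e.\ set $U_e$ of strings. Genericity guarantees only that $G$ either meets or strongly avoids each such set; it says nothing about $G$ computing which alternative holds. In fact the set $\{e : G \text{ meets } W_e\}$ is Turing-equivalent to $G'$ (since $e\in G'$ iff $G$ meets the c.e.\ set $\{\sigma:\Phi_e^\sigma(e)\!\downarrow\}$), and $G'>_T G$ always. So the events you single out as ``$1$-generic-decidable'' are exactly the general $\Sigma_1^G$ events you say you are trying to avoid. The obvious repair---coding the configuration of $A_e$ into a reserved bit $G(n_e)$---fails too: if $S=\{e:A_e\text{ was diagonalized}\}$ is an infinite c.e.\ set and $G(n_e)=S(e)$ on a computable set of reserved positions, then the c.e.\ set $W=\{\sigma:(\exists e)[\,\sigma(n_e)=0\wedge e\in S\,]\}$ is neither met nor strongly avoided by $G$, so $G$ is not $1$-generic. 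Your injury-free finite-extension picture has a further mismatch: the diagonalization requirement for $\Phi_e$ must wait for $\Phi_e$ to commit on its gadget, which may never happen, so it cannot be discharged at a single predetermined stage while keeping $\A$ computable.

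The paper's construction is entirely different and does not use $1$-genericity as a decoding tool. It builds $G$ as a $\Delta_2^0$ approximation on a priority tree with injury. The $S_i$-strategies build partial embeddings $f_\alpha^G$ into each $G$-computable $\mathcal{M}_i^G$ with explicit large uses $u_{\alpha,n}$; when a $P_e$-strategy later adds diagonalizing loops to the $n$th pair of components, it \emph{enumerates $u_{\alpha,n}$ into $G$}, destroying the now-incorrect $f_\alpha^G$-computation so that $\alpha$ can redefine it once the new loops surface in $\mathcal{M}_i^G$. The $R_j$-strategies for $1$-genericity are separate tree nodes that search for extensions into $W_j$; when one succeeds it may roll $G$ back below earlier uses, and the resulting conflict with already-placed diagonalizing loops is resolved by the homogenization step described in Section~\ref{subsection: interactions}. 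Genericity here is a requirement to be met alongside the others, not a mechanism for reading configurations.
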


The proof is a priority construction on a tree of strategies, using largely the same machinery developed in \cite{villano2024computable}. In Section \ref{section: informal strategies for 1-generic}, we
introduce informal descriptions for the strategies we need to satisfy our requirements for the construction and discuss important interactions between certain strategies. In Section \ref{section: main proof for generic result}, we detail the formal strategies, state and prove auxiliary lemmas about our construction, and state and prove the main verification lemma. We then end this paper with Section \ref{section: other classes of structures} where we discuss for which classes of structures admit or do not admit a computable example that can change its categorical behavior relative to a degree above $\mathbf{0}$.

\section{Informal strategies for Theorem \ref{thm: categoricity relative to a 1-generic}}\label{section: informal strategies for 1-generic}
We first establish the informal strategies to meet three goals for our construction: to build a $1$-generic set $G$, to make our graph $\A$ not computably categorical, and to make our graph $\A$ computably categorical relative to $G$. We will then describe the interactions that occur when we use all three strategies together, and how to resolve any issues that arise.

\subsection{Building a $1$-generic $G$}\label{section: informal building a generic}
We will build a $1$-generic $G$ via infinitely many strategies. Recall that for a set $G$ to be $1$-generic, we must have that for all $e\in\omega$, there exists an initial segment $\sigma$ of $G$ where either $\sigma\in W_e$ or for all extensions $\tau\supseteq\sigma$, $\tau\not\in W_e$. That is, either $G$ meets or avoids each c.e.\ set. 

For each $j\in\omega$, we meet the requirement
\[
R_j : (\exists\sigma\subseteq G)(\sigma\in W_j \lor (\forall\tau\supseteq\sigma)(\tau\not\in W_j)).
\]

We will define sets $G[s]$, where $s$ is a stage number, which are a $\Delta_2^0$ approximation to our $1$-generic $G$. Let $\alpha$ be the highest priority $R$-strategy. When it is first eligible to act at a stage $s_0$, $\alpha$ will define a parameter $n_\alpha>0$ large (and so in particular, $n_\alpha>s_0$), and its goal is to find an extension $\tau_0\supseteq G[s_0]\restriction n_\alpha$ where $\tau_0\in W_j$. At each $\alpha$-stage, it searches for such an extension.

If $\alpha$ never finds such an extension, then we have that all extensions of $G[s_0]\restriction n_\alpha$ avoid $W_j$ and $\alpha$ succeeds trivially. If $\alpha$ finds an extension $\tau_0$ such that $\tau_0\in W_j$ at a stage $s_1\geq s_0$, then $\alpha$ will define $G[s_1]=\tau_0^\frown 0^\omega$ and will initialize all lower priority strategies. In particular, all lower priority $R$-strategies will now have to redefine new, larger parameters (and so these parameters will be bigger than $|\tau_0|$) and must now use $G[s_1]$ as the current approximation of $G$ at the end of stage $s_1$ for all stages $s\geq s_1$. We also have that $G[s_1]\restriction n_\alpha=G[s_0]\restriction n_\alpha$, so $\alpha$ does not cause changes on numbers below $n_\alpha$.

If all $R_j$-strategies succeeded, then we define $G=\lim\limits_{s\to\infty} G[s]$ to be our $1$-generic set. This limit exists by the observation in the previous paragraph.

\subsection{Making $\A$ not computably categorical}\label{section: informal not c.c.}
We will build our directed graph $\A$ in stages. At stage $s=0$, we set $\A=\emptyset$. Then, at stage $s>0$, we add two new connected components to $\A[s]$ by adding the root nodes $a_{2s}$ and $a_{2s+1}$ for those components, and attaching to each node a $2$-loop (a cycle of length $2$). We then attach a $(5s+1)$-loop to $a_{2s}$ and a $(5s+2)$-loop to $a_{2s+1}$. This gives us the configuration of loops:
    \begin{align*}
        a_{2s} &: 2, 5s+1  \\
        a_{2s+1} &: 2, 5s+2.
    \end{align*}
The connected component consisting of the root node $a_{2s}$ with its attached loops will be referred to as the $2s$\textit{th connected component} of $\A$. During the construction, we might add more loops to connected components of $\A$, which causes them to have the following configuration:
    \begin{align*}
        a_{2s} &: 2, 5s+1, 5s+2, 5s+3  \\
        a_{2s+1} &: 2, 5s+1, 5s+2, 5s+4
    \end{align*}
This configuration, if certain interactions occur during the construction, can also transform into the following:
    \begin{align*}
        a_{2s} &: 2, 5s+1, 5s+2, 5s+3, 5s+4  \\
        a_{2s+1} &: 2, 5s+1, 5s+2, 5s+3, 5s+4.
    \end{align*}

Note that the last configuration has that the $2s$th and $(2s+1)$st components of $\A$ are isomorphic, which may be necessary as a result of a special interaction between strategies of all three types of requirements in this construction (see \ref{subsection: interactions}).

In order to make $\A$ not computably categorical, it is sufficient to construct a computable copy $\B$ such that for all $e\in\omega$, the computable function $\Phi_e$ is not an isomorphism between $\A$ and $\B$. 

Similarly to $\A$, we build the directed graph $\B$ in stages. At stage $s=0$, we set $\B=\emptyset$. At stage $s>0$, we add root nodes $b_{2s}$ and $b_{2s+1}$ to $\B$ and attach to each one a $2$-loop. Next, we attach a $(5s+1)$-loop to $b_{2s}$ and a $(5s+2)$-loop to $b_{2s+1}$. However, throughout the construction, we may add new loops to specific components of $\B$. For the $2s$th and $(2s+1)$st components of $\B$, we have three possible final configurations of the loops. If we never start the process of diagonalizing using these components, then they will remain the same forever:
    \begin{align*}
        b_{2s} : 2, 5s+1  \\
        b_{2s+1} : 2, 5s+2. 
    \end{align*}
If we use these components to diagonalize against a computable map $\Phi_e$, they will end in the following configuration:
    \begin{align*}
        b_{2s} : 2, 5s+1, 5s+2, 5s+4  \\
        b_{2s+1} : 2, 5s+1, 5s+2, 5s+3. 
    \end{align*}
If these components were used to diagonalize against $\Phi_e$ and then certain interactions occur between all three types of requirements, these components may have the final configuration:
    \begin{align*}
        b_{2s} : 2, 5s+1, 5s+2, 5s+3, 5s+4  \\
        b_{2s+1} : 2, 5s+1, 5s+2, 5s+3, 5s+4. 
    \end{align*}

For all $e\in\omega$, we meet the following requirement
\[
P_e : \text{$\Phi_e:\A\to\B$ is not an isomorphism}.
\]

To satisfy this requirement, we will diagonalize against $\Phi_e$. Let $\alpha$ be a $P_e$-strategy.

When $\alpha$ is first eligible to act, it picks a large number $n_\alpha$, and for the rest of this strategy, let $n=n_\alpha$. This parameter indicates which connected components of $\B$ will be used in the diagonalization. At future stages, $\alpha$ checks if $\Phi_e$ maps the $2n$th and $(2n+1)$st connected component of $\A$ to the $2n$th and $(2n+1)$st connected component of $\B$, respectively. If not, $\alpha$ does not take any action. If $\alpha$ sees such a computation, it acts in the following way.

At this point, our connected components in $\A[s]$ and $\B[s]$ are as follows:
    \begin{align*}
        a_{2n} : 2, 5n+1 & \ \ \ \ b_{2n} : 2, 5n+1 \\
        a_{2n+1} : 2, 5n+2 & \ \ \ \ b_{2n+1} : 2, 5n+2.
    \end{align*}
Since $\Phi_e$ looks like a potential isomorphism between $\A$ and $\B$, $\alpha$ will now take action to eventually force the true isomorphism to match $a_{2n}$ with $b_{2n+1}$ and to match $a_{2n+1}$ with $b_{2n}$.

$\alpha$ adds a $(5n+2)$- and $(5n+3)$-loop to $a_{2n}$ and a $(5n+1)$- and $(5n+4)$-loop to $a_{2n+1}$ in $\A[s]$. It also attaches a $(5n+2)$- and $(5n+4)$-loop to $b_{2n}$ and a  $(5n+1)$- and $(5n+3)$-loop to $b_{2n+1}$ in $\B$[s]. Our connected components in $\A[s]$ and in $\B[s]$ are now:
    \begin{align*}
        a_{2n} : 2, 5n+1, 5n+2, 5n+3 & \ \ \ \ b_{2n} : 2, 5n+1, 5n+2, 5n+4 \\
        a_{2n+1} : 2, 5n+1, 5n+2, 5n+4 & \ \ \ \ b_{2n+1} : 2, 5n+1, 5n+2, 5n+3.
    \end{align*}

For all higher priority $S$-strategies $\beta$ such that $\beta^\frown\<\infty\>\subseteq\alpha$, $\alpha$ enumerates the use $u_{\beta,n_\alpha}$ into $G$ and sets $n_\beta=n_\alpha$. This enumeration occurs because if we have that $\beta^\frown\<\infty\>\subseteq\alpha$, then $\alpha$ believes $\beta$ will define a total function $f_\beta^G$ on $\A$. Therefore, $\alpha$ doesn't restart when $\beta$ extends its definition of $f_\beta^G$ and so this map may be defined on the $2n_\alpha$th and $(2n_\alpha+1)$st components of $\A$ when $\alpha$ acts. In this case, $f_\beta^G[s-1]$ maps the $2n_\alpha$th and $(2n_\alpha+1)$st components of $\A$ to their copies in $\mathcal{M}_i^G$ with some use $u_{\beta,n_\alpha}$. We will choose this use carefully so that we know $u_{\beta,n_\alpha}\not\in G[s-1]$, and in fact, $u_{\beta,n_\alpha}\not\in G[t]$ for all $t<s$. This allows $\alpha$ to enumerate the use $u_{\beta,n_\alpha}$ into $G[s]$ to destroy the $f_\beta^G$ computation on the $2n_\alpha$th and $(2n_\alpha+1)$st components in $\A$.

We will refer to these two actions as $\alpha$ \textit{issuing a challenge} to $\beta$. $\alpha$ now takes the outcome $w_2$. If by the next $\alpha$-stage we have that $\alpha$ has not been initialized, then it takes the success outcome $s$. By $\alpha$'s actions above, we have that $\Phi_e(a_{2n})=b_{2n}$ and $\Phi_e(a_{2n+1})=b_{2n+1}$, and so $\Phi_e$ fails to be a computable isomorphism between $\A$ and $\B$.

\subsection{Being computably categorical relative to $G$}\label{section: informal categoricity strategy}

Since we want to make $\A$ computably categorical relative to our $1$-generic $G$, we must define embeddings using $G$ as an oracle. Additionally, we are looking at (partial) $G$-computable directed graphs $\mathcal{M}_i^G$ with domain $\omega$ whose edge relation is given by $\Phi_i^G$. Any changes in initial segments of $G$ can cause changes for both our partial $G$-computable embeddings and in $\mathcal{M}_i^G$ throughout the construction.

We define the following terms to keep track of certain finite subgraphs which appear and may remain in $\mathcal{M}_i^G$ throughout our construction.

\begin{definition}
    Let $C_0$ and $C_1$ be isomorphic finite distinct subgraphs of $\mathcal{M}_i^G[s]$. The \textit{age of} $C_0$ is the least stage $t\leq s$ such that all edges in $C_0$ appear in $\mathcal{M}_i^G[t]$ and remained in $\mathcal{M}_i^G[s']$ for all $t\leq s'\leq s$, denoted by $\age(C_0)$. We say that $C_0$ is \textit{older than} $C_1$ when $\age(C_0)\leq\age(C_1)$.

    We say that $C_0$ is the \textit{oldest} if for all finite distinct subgraphs $C\cong C_0$ of $\mathcal{M}_i^G[s]$, $\age(C_0)\leq\age(C)$.
\end{definition}

\begin{definition}
    Let $C_0=\<a_0,a_1,\dots,a_k\>$ and $C_1=\<b_0,b_1,\dots,b_k\>$ be isomorphic finite distinct subgraphs of $\mathcal{M}_i^G[s]$ with $a_0<a_1<\dots<a_k$ and $b_0<b_1<\dots<b_k$. We say that $C_0<_{\text{lex}} C_1$ if for the least $j$ such that $a_j\neq b_j$, $a_j<b_j$.

    We say that $C_0$ is the \textit{lexicographically least} if for all finite distinct subgraphs $C\cong C_0$ of $\mathcal{M}_i^G[s]$, $C_0<_{\text{lex}} C$.
\end{definition}

If $\A\cong\mathcal{M}_i^G$, then we need to build a $G$-computable isomorphism between these graphs. For each $i\in\omega$, we meet the following requirement
    \[
    S_i : \text{if $\A\cong\mathcal{M}_i^G$, then there exists a $G$-computable isomorphism $f_i^G:\A\to\mathcal{M}_i^G$}.
    \]

We will show in the verification that if $\A\cong\mathcal{M}_i^G$, ``true'' copies of components from $\A$ will eventually appear and remain in $\mathcal{M}_i^G$ (and thus become the oldest and lex-least finite subgraph which is isomorphic to a component in $\A$), and so our $S_i$-strategy below will be able to define the correct $G$-computable isomorphism between the two graphs. 

Let $\alpha$ be an $S_i$-strategy. When $\alpha$ is first eligible to act, it sets its parameter $n_\alpha=0$ and defines $f_\alpha^G$ to be the empty map. Once $\alpha$ has defined $n_\alpha$, then when $\alpha$ acted at the previous $\alpha$-stage $s_0$, we have the following situation: 
\begin{enumerate}
    \item For each $m<n_\alpha$, $f_\alpha^G[s_0]$ maps the $2m$th and $(2m+1)$st components of $\A[s_0]$ to isomorphic copies in $\mathcal{M}_i^G[s_0]$.
    \item For $m<n_\alpha$, let $l_m$ be the maximum $\Phi_i^G[s_0]$-use for the loops in the copies in $\mathcal{M}_i^G[s_0]$ for the $2m$th and $(2m+1)$st components in $\A$. We can assume that if $m_0<m_1<n_\alpha$, then $l_{m_0}<l_{m_1}$.
    \item For $m<n_\alpha$, let $u_{\alpha,m}$ be the $f_\alpha^G[s_0]$-use for the mapping of the $2m$th and $(2m+1)$st components of $\A$. This use will be constant for all elements in these components.
    \item By construction, we will have that $l_m<u_{\alpha,k}$ for all $m\leq k<n_\alpha$.
\end{enumerate}

Suppose $\alpha$ is acting at stage $s$ and that $n_\alpha>0$ and let $s_0$ be the previous $\alpha$-stage in the construction. We now break into cases.

If $\alpha$ took an outcome $w_n$ at $s_0$, then no strategy could have changed $G$ below the use $u_{\alpha,n_\alpha-1}$, and so the value of $n_\alpha$ remains unchanged. 

If instead we have that $\alpha$ took the $\infty$ outcome at $s_0$, then $G$ may have changed underneath $u_{\alpha,n_\alpha-1}$. We will show in the verification that the only strategies that can change $G$ below this use are $P$ or $R$-strategies $\beta$ such that $\beta\supseteq\alpha^\frown\<\infty\>$. If $G[s]\restriction(u_{\alpha,n_\alpha-1}+1)\neq G[s_0]\restriction(u_{\alpha,n_\alpha-1}+1)$, then some $\beta\supseteq\alpha^\frown\<\infty\>$ caused a change at stage $s_0$ after $\alpha$ acted. Furthermore, at most one such $\beta$ can cause this change at stage $s_0$, as every other strategy extending $\beta^\frown\<s\>$ will define new and large witnesses for the remainder of stage $s_0$. 

If $\beta$ is a $P$-strategy which changed $G$ under $u_{\alpha,n_\alpha-1}$, then it added diagonalizing loops to the $2n_\beta$th and $(2n_\beta+1)$st components in $\A$. Assuming that $n_\beta<n_\alpha$, then $\alpha$ has already defined $f_\alpha^G[s_0]$ on these components. Hence, $\beta$ enumerates the use $u_{\alpha,n_\beta}$ into $G$ to destroy this computation. To account for this change, $\alpha$ redefines $n_\alpha$ to be the least $m$ such that $f_\alpha^G$ is not currently defined on the $2m$th and $(2m+1)$st components of $\G$.

If instead we have that the $\beta$ is an $R$-strategy, then suppose $\beta$ redefines $G$ by setting $G[s_0]=\tau^\frown 0^\omega$. This definition may have changed $G$ on numbers as small as $n_\alpha$ and hence may have injured previously defined $f_\alpha^G$ computations. So, at stage $s$, $\alpha$ resets $n_\alpha$ to be the least $m$ such that $f_\alpha^G$ is not currently defined on the $2m$th and $(2m+1)$st components of $\G$.



We now carry out the main action of the $S_i$-strategy: we check whether we can extend $f_\alpha^G[s-1]$ to the $2n_\alpha$th and $(2n_\alpha+1)$st components of $\A[s]$. Search for isomorphic copies in $\mathcal{M}_i^G[s]$ of these components. If there are multiple copies in $\mathcal{M}_i^G[s]$, choose the oldest such copy to map to, and if there are multiple equally old copies, choose the lexicographically least oldest copy. If there are no copies in $\mathcal{M}_i^G[s]$, then keep the value of $n_\alpha$ the same, $f_\alpha^G$ unchanged, and let the next requirement act. Otherwise, extend $f_\alpha^G[s-1]$ to $f_\alpha^G[s]$ to include the $2n_\alpha$th and $(2n_\alpha+1)$st components of $\A$ with a large use $u_{\alpha,n_\alpha}$. Since $u_{\alpha,n_\alpha}$ was chosen large, we have that $u_{\alpha, n_\alpha}>l_k$ for all $k\leq n_\alpha$. Increment $n_\alpha$ by $1$ and go to the next requirement. If $\alpha$ had to redefine $n_\alpha$ because it was challenged by an $S$-strategy $\beta$ of higher priority, then $\alpha$ continues to take finitary outcomes until the value of $n_\alpha$ exceeds $n_\beta$.
    
If $\A\cong\mathcal{M}_i^G$, then for each $n$, eventually the real copies of the $2n$th and $(2n+1)$st components of $\A$ will appear in $\mathcal{M}_i^G$. Moreover, they will eventually be the oldest and lexicographically least copies in $\mathcal{M}_i^G$. After $G$ has settled down on the the maximum true $G$-use on the edges in the loops in these $\mathcal{M}_i^G$ components, we will define $f_\alpha^G$ correctly on these components with a large use $u_{\alpha,n}$. Therefore, eventually our map $f_\alpha^G$ is never injured again on the $2n$th and $(2n+1)$st components. It follows that if $A\cong\mathcal{M}_i^G$, then $f_\alpha^G$ will be an embedding of $\A$ into $\mathcal{M}_i^G$ which can be extended to a $G$-computable isomorphism defined on all of $\A$. 

\subsection{An interaction caused by genericity}\label{subsection: interactions}

The following interaction arises because we are building a $1$-generic. The fact that initial segments of $G$ can change several times throughout the construction requires our isolated strategies to be more flexible than what was done previously in \cite{villano2024computable}, and we outline the needed changes to affected strategies below.

Let $\alpha$ be an $S_i$-strategy, $\beta$ be an $R_j$-strategy, and $\gamma$ be a $P_e$-strategy where $\alpha^\frown\<\infty\>\subseteq\beta^\frown\<w_1\>\subseteq\gamma$, where $\beta^\frown\<w_1\>$ indicates that $\beta$ defined its parameter $n_\beta$ and took the waiting outcome at the last $\beta$-stage. Suppose that $n_\beta$, at a stage $s_0$, and $n_\gamma$ have been defined and that $n_\beta<n_\gamma$. Suppose that at stage $s_1$, $\alpha$ is able to define $f_\alpha^G[s_1]$ with a use  $u_{\alpha,n_\gamma}$ on the $2n_\gamma$th and $(2n_\gamma+1)$st components of $\A$, with the configuration of the loops in $\A[s_1]$ and $\mathcal{M}_i^G[s_1]$ being:
    \begin{align*}
        a_{2n_\gamma} : 2, 5n_\gamma+1 & \ \ \ \ c : 2, 5n_\gamma+1 \\
        a_{2n_\gamma+1} : 2, 5n_\gamma+2 & \ \ \ \ d : 2, 5n_\gamma+2.
    \end{align*}
Note that $f_\alpha^G[s_1]$ on these two $\A$-components is being protected by the initial segment $G[s_1]\restriction(u_{\alpha,n_\gamma}+1)$. Then, at the end of stage $s_1$, $\alpha$ takes the $\infty$ outcome, and $\beta$ is eligible to act. Suppose that $\beta$ continues to take the $w_1$ outcome, and so $\gamma$ is eligible to act at a stage $s_2>s_1$. Suppose that $\gamma$ sees that the map $\Phi_e[s_2]$ maps its chosen $\A$-components isomorphically to their copies in $\B[s_2]$, and thus begins to diagonalize by adding new loops to the following $\A$-components and $\B$-components:
    \begin{align*}
        a_{2n_\gamma} : 2, 5n_\gamma+1, 5n_\gamma+2, 5n_\gamma+3 & \ \ \ \ b_{2n_\gamma} : 2, 5n_\gamma+1, 5n_\gamma+2, 5n_\gamma+4 \\
        a_{2n_\gamma+1} : 2, 5n_\gamma+1, 5n_\gamma+2, 5n_\gamma+4 & \ \ \ \ b_{2n_\gamma+1} : 2, 5n_\gamma+1, 5n_\gamma+2, 5n_\gamma+3.
    \end{align*}
When $\gamma$ adds these loops, it enumerates $u_{\alpha,n_\gamma}$ into $G[s_2]$, and so $G[s_2]\restriction(u_{\alpha,n_\gamma}+1)\neq G[s_1]\restriction(u_{\alpha,n_\gamma}+1)$ and $f_\alpha^G[s_1]$ disappears on the the $2n_\gamma$th and $(2n_\gamma+1)$st components of $\A$. Let $s_3\geq s_2$ be a stage such that newly added loops first appeared in $\mathcal{M}_i^G[s_3]$ in the following positions:
    \begin{align*}
        a_{2n_\gamma} : 2, 5n_\gamma+1, 5n_\gamma+2, 5n_\gamma+3 & \ \ \ \ c : 2, 5n_\gamma+1, 5n_\gamma+2, 5n_\gamma+4 \\
        a_{2n_\gamma+1} : 2, 5n_\gamma+1, 5n_\gamma+2, 5n_\gamma+4 & \ \ \ \ d : 2, 5n_\gamma+1, 5n_\gamma+2, 5n_\gamma+3.
    \end{align*}
$\alpha$ is now able to recover its map $f_\alpha^G$ on these $\A$-components with a new large use $u_{\alpha,n_\gamma}'$ by mapping $a_{2n_\gamma}$ to $d$, $a_{2n_\gamma+1}$ to $c$, and all nodes in each component to their respective copies in $\mathcal{M}_i^G[s_3]$. Note that this new map $f_\alpha^G[s_3]$ on these $\A$-components is being protected by $G[s_3]\restriction(u_{\alpha,n_\gamma}'+1)$. At the end of stage $s_3$, $\alpha$ takes the $\infty$ outcome and suppose $\beta$ is now eligible to act again. Now, suppose that $\beta$ has found an extension $\tau_{n_\beta}$ of $G[s_0]\restriction n_\beta$ which is in $W_j[s_4]$ where $s_4>s_3$. $\beta$ then extends $G[s_0]\restriction n_\beta$ to $\tau_{n_\beta}$ and defines $G[s_4]=\tau_{n_\beta}^\frown0^\omega$. Furthermore, suppose that $G[s_4]\restriction(u_{\alpha,n_\gamma}+1)=G[s_1]\restriction(u_{\alpha,n_\gamma}+1)$, and so the original map $f_\alpha^G[s_1]$ is restored on the $2n_\gamma$th and $(2n_\gamma+1)$st components of $\A$. $\beta$ taking the success outcome initializes $\gamma$ since $\gamma\supseteq\beta^\frown\<w_1\>$, however because $\A$ and $\B$ need to be computable directed graphs, the loops added at stage $s_2$ must remain in both graphs. In particular, if the true outcome for $\mathcal{M}_i^G$ is to have the following connected components
    \begin{align*}
        c : 2, 5n_\gamma+1, 5n_\gamma+2, 5n_\gamma+4 \\
        d : 2, 5n_\gamma+1, 5n_\gamma+2, 5n_\gamma+3,
    \end{align*}
then because $f_\alpha^G[s_1]$ was restored on the two $\A$-components, it is now incorrect on the $G$-computable embedding from $\A$ into $\mathcal{M}_i^G$ since $f_\alpha^G[s_1](a_{2n_\gamma})=c$ and $f_\alpha^G[s_1](a_{2n_\gamma+1})=d$, in particular:
    \begin{align*}
        a_{2n_\gamma} : 2, 5n_\gamma+1, 5n_\gamma+2, 5n_\gamma+3 & \ \ \ \ c : 2, 5n_\gamma+1, 5n_\gamma+2, 5n_\gamma+4 \\
        a_{2n_\gamma+1} : 2, 5n_\gamma+1, 5n_\gamma+2, 5n_\gamma+4 & \ \ \ \ d : 2, 5n_\gamma+1, 5n_\gamma+2, 5n_\gamma+3.
    \end{align*}

We have then that if $\alpha^\frown\<\infty\>$ is on the true path, it will define an incorrect $G$-computable embedding from $\A$ into $\mathcal{M}_i^G$. To resolve this issue, after $\beta$ finds its extension, we will add an additional step to the $R_j$-strategy where we homogenize any affected $\A$-components defined so far, namely the $2n_\gamma$th and $(2n_\gamma+1)st$ components of $\A$ in this example, in the following way:
    \begin{align*}
        a_{2n_\gamma} : 2, 5n_\gamma+1, 5n_\gamma+2, 5n_\gamma+3, 5n_\gamma+4 \\
        a_{2n_\gamma+1} : 2, 5n_\gamma+1, 5n_\gamma+2, 5n_\gamma+3, 5n_\gamma+4.
    \end{align*}
Then, if the newly added loops reappear in the corresponding copies of each component in $\mathcal{M}_i^G$ and remain, it does not matter which positions they appear in since $f_\alpha^G[s_1]$ will be correct no matter what. If other cycles or the loops never appear again after $\beta$'s success, then we have that $\A\not\cong\mathcal{M}_i^G$ and so $\alpha$ trivially wins. Furthermore, the path now goes to the left of $\gamma$, initializing $\gamma$, and so we do not hurt our construction by undoing $\gamma$'s diagonalization by homogenizing.

Additionally, since we need that $\A\cong\B$, we must also homogenize the corresponding components in $\B$ as well:
    \begin{align*}
        b_{2n_\gamma} : 2, 5n_\gamma+1, 5n_\gamma+2, 5n_\gamma+3, 5n_\gamma+4 \\
        b_{2n_\gamma+1} : 2, 5n_\gamma+1, 5n_\gamma+2, 5n_\gamma+3, 5n_\gamma+4.
    \end{align*}

Lastly, we will also homogenize the $\A$-components which received diagonalizing loops from $P$-strategies which are to the right of the current true path.

\section{Proof of Theorem \ref{thm: categoricity relative to a 1-generic}}\label{section: main proof for generic result}

\subsection{Requirements}
We have three requirements for our construction:
\[
R_j : (\exists\sigma\subseteq G)(\sigma\in W_j \lor (\forall\tau\supseteq\sigma)(\tau\not\in W_j));
\]
\[
P_e : \text{$\Phi_e:\A\to\B$ is not an isomorphism};
\]
\[
S_i : \text{if $\A\cong\mathcal{M}_i^G$, then there exists a $G$-computable isomorphism $f_i^G:\A\to\mathcal{M}_i^G$}.
\]


\subsection{Construction}
Let $\Lambda=\{\infty<_\Lambda\dots<_\Lambda s <_\Lambda w_2 <_\Lambda w_1<_\Lambda w_0\}$ be the set of outcomes, and let $T=\Lambda^{<\omega}$ be our tree of strategies. The construction will be performed in $\omega$ many stages $s$.

We define the \textit{current true path} $\pi_s$, the longest strategy eligible to act at stage $s$, inductively. For every $s$, $\lambda$, the empty string, is eligible to act at stage $s$. Suppose the strategy $\alpha$ is eligible to act at stage $s$. If $|\alpha|<s$, then follow the action of $\alpha$ to choose a successor $\alpha^\frown\<o\>$ on the current true path. If $|\alpha|=s$, then set $\pi_s=\alpha$. For all strategies $\beta$ such that $\pi_s <_L\beta$, initialize $\beta$ (that is, set all parameters associated to $\beta$ to be undefined). Also, if $\pi_s<_L\beta$ and $\beta$ is a $P$-strategy such that the $2n_\beta$th and $(2n_\beta+1)$st components of $\A$ and $\B$ have been diagonalized (that is, the $(5n+3)$- and $(5n+4)$-loops have been added), then homogenize these components in $\A$ and $\B$. If $\beta <_L \pi_s$ and $|\beta|<s$, then $\beta$ retains the same values for its parameters.

We will now give formal descriptions of each strategy and their outcomes in the construction.

\subsection{$R_j$-strategies and outcomes}
We first cover the $R_j$-strategies used to build our $1$-generic set $G$. Let $\alpha$ be an $R_j$-strategy eligible to act at stage $s$.

\textit{Case 1}: If $\alpha$ is acting for the first time at stage $s$ or has been initialized since the last $\alpha$-stage, it defines $m_\alpha=\max\{n_\beta : n_\beta \ \text{defined for $\beta\subset\alpha$}\}$ only when $n_\beta$ has been defined for all $\beta\subset\alpha$. Once $m_\alpha$ has been defined, $\alpha$ waits to see if $f_\gamma^G$ converges on the $2m_\alpha$th and $(2m_\alpha+1)$st components of $\A$ for all $S$-strategies $\gamma^\frown\<\infty\>\subseteq\alpha$. Until $f_\gamma^G$ converges on all of those components, it remains in \textit{Case 1} by taking the $w_0$ outcome and does not define its parameter $n_\alpha$.

\textit{Case 2}: If $\alpha$ took the $w_0$ outcome at the previous $\alpha$-stage and all maps $f_\gamma^G$ have converged on the $2m_\alpha$th and $(2m_\alpha+1)$st components of $\A$ for $S$-strategies $\gamma^\frown\<\infty\>\subseteq\alpha$, it defines its parameter $n_\alpha$ to be large. In particular, $n_\alpha$ is greater than all of the uses of $f_\gamma^G$ on the $2n_\beta$th and $(2n_\beta+1)$st components for all $P$- and $R$-strategies $\beta\subset\alpha$ (since $\alpha$ won't define $n_\alpha$ until all the $n_\beta$'s are defined). Then, take outcome $w_1$.

\textit{Case 3}: If $\alpha$ took the $w_1$ outcome at the end of the previous $\alpha$-stage, check if there is an extension $\tau$ of $G[s-1]\restriction n_\alpha$ such that $\tau\in W_j[s]$. If not, take the $w_1$ outcome. If such a $\tau$ is found, define $G[s]=\tau^\frown0^\omega$ and take outcome $w_2$.

\textit{Case 4}: If $\alpha$ took the $w_2$ outcome the last time it was eligible to act and has not been initialized, take outcome $s$.

\textit{Case 5}: If $\alpha$ took the $s$ outcome the last time it was eligible to act and has not been initialized, continue taking outcome $s$.



\subsection{$P_e$-strategies and outcomes}
Let $\alpha$ be a $P_e$-strategy eligible to act at stage $s$.

\textit{Case 1}: If $\alpha$ is first eligible to act at stage $s$ or has been initialized, it defines $m_\alpha=\max\{n_\beta : n_\beta \ \text{defined for $\beta\subset\alpha$}\}$ only when $n_\beta$ has been defined for all $\beta\subset\alpha$. Once $m_\alpha$ has been defined, $\alpha$ waits to see if $f_\gamma^G$ converges on the $2m_\alpha$th and $(2m_\alpha+1)$st components of $\A$ for all $S$-strategies $\gamma^\frown\<\infty\>\subseteq\alpha$. Until $f_\gamma^G$ converges on all of those components, $\alpha$ remains in \textit{Case 1} by taking the $w_0$ outcome and does not define define its parameter $n_\alpha$.

\textit{Case 2}: If $\alpha$ took the $w_0$ outcome at the previous $\alpha$-stage and all maps $f_\gamma^G$ have converged on the $2m_\alpha$th and $(2m_\alpha+1)$st components of $\A$ for $S$-strategies $\gamma^\frown\<\infty\>\subseteq\alpha$, it defines the parameter $n_\alpha=n$ to be large and takes outcome $w_1$.

\textit{Case 3}: If $\alpha$ took the $w_1$ outcome at the last $\alpha$-stage, check whether $\Phi_e[s]$ maps the $2n$th and $(2n+1)$st components of $\A$ isomorphically into $\B$. If not, take outcome $w_1$.

If so, add a $(5n+2)$- and $(5n+3)$-loop to $a_{2n}$ and a $(5n+1)$- and $(5n+4)$-loop to $a_{2n+1}$ in $\A[s]$. Add a $(5n+2)$- and $(5n+4)$-loop to $b_{2n}$ and a $(5n+1)$- and $(5n+3)$-loop to $b_{2n+1}$ in $\B[s]$. If a map $f_\beta^G$ had already been defined on these components with a use $u_{\beta,n}$ by an $S$-strategy $\beta$ where $\beta^\frown\<\infty\>\subseteq\alpha$, enumerate $u_{\beta,n}$ into $G$, and issue a challenge to each such $S$-strategy. Take outcome $w_2$.

\textit{Case 4}: If $\alpha$ took the $w_2$ outcome at the last $\alpha$-stage and has not been initialized, take outcome $s$.

\textit{Case 5}: If $\alpha$ took the $s$ outcome at the previous $\alpha$-stage and has not been initialized, take outcome $s$ again.

\subsection{$S_i$-strategies and outcomes}
Let $\alpha$ be an $S_i$-strategy eligible to act at stage $s$.

\textit{Case 1}: If $\alpha$ is acting for the first time or has been initialized since the last $\alpha$-stage, set $n_\alpha=0$, define $f_\alpha^G[s]$ to be the empty function, and take the $w_0$ outcome.

\textit{Case 2}: If we are not in \textit{Case 1}, let $t$ be the last $\alpha$-stage and we break into the following subcases.

\textit{Subcase 1}: $\alpha$ is not currently challenged by a $P$-strategy and one of the following conditions holds:
\begin{enumerate}
    \item $\alpha$ took the $w_{n_\alpha}$ outcome at stage $t$,
    \item $\alpha$ took the $\infty$ outcome at stage $t$ but no $P$ or $R$-strategy $\beta\supseteq\alpha^\frown\<\infty\>$ took the $w_1$ outcome at $t$, or
    \item $\alpha$ took the $\infty$ outcome at $t$ and an $R$-strategy $\beta\supseteq\alpha^\frown\<\infty\>$ took the $w_1$ outcome at $t$ but $G[s-1]\restriction(u_{\alpha,n_\alpha-1}+1)=G[t]\restriction(u_{\alpha,n_\alpha-1}+1)$.
\end{enumerate}
In this case, $\alpha$ passes to the module for checking for extensions of $f_\alpha^G$ below.

\textit{Subcase 2}: $\alpha$ is currently challenged by a $P$-strategy $\beta\supseteq\alpha^\frown\<\infty\>$. In the verification, we will show that $\beta$ is unique.

If $\alpha$ was challenged by $\beta$ at stage $t$ (that is, $s$ is the first $\alpha$-stage since $\alpha$ was challenged), then set $n_\alpha$ to be the least $m$ such that $f_\alpha^G[s-1]$ is not defined on the $2m$th and $(2m+1)$st components of $\A$. In the verification, we will show that $n_\alpha\leq n_\beta$. Go to the module for checking for extensions of $f_\alpha^G$. If $f_\alpha^G$ is extended to the $2n_\alpha$th and $(2n_\alpha+1)$st components, then increment $n_\alpha$. If $n_\alpha>n_\beta$, take the $\infty$ outcome and remove the challenge on $\alpha$. Otherwise, take the $w_{n_\alpha}$ outcome and the challenge on $\alpha$ remains. 

If $\alpha$ was challenged before stage $t$, then $\alpha$ acts in the same way except it skips the initial redefining of $n_\alpha$.


\textit{Subcase 3}: $\alpha$ is not currently challenged by a $P$-strategy, but $\alpha$ took the $\infty$ outcome at stage $t$, an $R$-strategy $\beta\supseteq\alpha^\frown\<\infty\>$ took outcome $w_1$ at stage $t$, and $G[s-1]\restriction(u_{\alpha,n_\alpha-1}+1)\neq G[t]\restriction(u_{\alpha,n_\alpha-1}+1)$. Reset $n_\alpha$ to be the least $m$ such that $f_\alpha^G[s-1]$ is not defined on the $2m$th and $(2m+1)$st components of $\A$. Go to the module for checking for extensions of $f_\alpha^G$.


\textit{Module for checking for extensions of $f_\alpha^G$}: $\alpha$ searches for the oldest and lex-least copies of the $2n_\alpha$th and $(2n_\alpha+1)$st components in $\mathcal{M}_i^G[s]$. If no copies are found, leave $f_\alpha^G$ and $n_\alpha$ unchanged and take outcome $w_{n_\alpha}$. If $\alpha$ is challenged, then it remains challenged. Otherwise, if such copies are found, extend $f_\alpha^G$ by mapping the $2n_\alpha$th and $(2n_\alpha+1)$st components in $\A$ onto their copies in $\mathcal{M}_i^G[s]$. Define $u_{\alpha,n_\alpha}$ large and set it as the use of the newly defined computations of $f_\alpha^G$. Increment $n_\alpha$ by $1$ and take the $\infty$ outcome (unless $\alpha$ is challenged and we still have that $n_\alpha\leq n_\beta$, in which case, take the $w_{n_\alpha}$ outcome).

\subsection{Verification}
We begin with proving several auxiliary lemmas based on key observations of the construction. Afterwards, we will prove the main verification lemma. It is easy to see that the first two lemmas below are true.

\begin{lemma}\label{lemma: embedding of a graph into itself is an isomorphism}
    If $f:\A\to\A$ is an embedding of $\A$ into itself, then $f$ is an isomorphism.
\end{lemma}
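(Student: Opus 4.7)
The plan is to exploit the special combinatorial shape of the components of $\A$: each connected component is a finite structure consisting of a root node with a finite collection of cycles (``loops'') attached, and the multiset of loop sizes is the complete isomorphism invariant for such a component. I would first observe that the components come in pairs indexed by $s$: the $2s$th component has loops drawn from $\{2,5s+1,5s+2,5s+3,5s+4\}$, and similarly for the $(2s+1)$st. Because the loop sizes $5s+1,\dots,5s+4$ belong to the residue classes $1,2,3,4$ modulo $5$, two components with different indices $s\neq s'$ can never be isomorphic. Within a pair, the only possibilities produced by the construction are (a) the non-isomorphic configurations described in Section~\ref{section: informal not c.c.}, or (b) the homogenized configuration in which $C_{2s}$ and $C_{2s+1}$ are identical in loop-type.

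Given this, I would then argue as follows. Since $f$ is an embedding, it is injective and both preserves and reflects the edge relation, hence preserves and reflects the connected-component relation. So $f$ sends every connected component $C$ of $\A$ injectively into a connected component $C'$ of $\A$ with $C'\cong C$ (because the image must carry an isomorphic copy of $C$ and lie inside a single component). By the residue-class observation above, the only components isomorphic to $C_{2s}$ or $C_{2s+1}$ are $C_{2s}$ and $C_{2s+1}$ themselves. Hence $f$ maps $C_{2s}\cup C_{2s+1}$ into $C_{2s}\cup C_{2s+1}$ for every $s$.

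Finally, I would close with the pigeonhole step. Each $C_{2s}\cup C_{2s+1}$ is finite, so an injective map from this finite set into itself is automatically surjective; equivalently, an injective homomorphism from a finite cycle (or finite bouquet of cycles attached at a common vertex) into an isomorphic finite structure is a bijection. Taking the union over all $s$ shows that $f$ is surjective on all of $\A$. Together with injectivity and the fact that $f$ preserves and reflects edges, this makes $f$ an isomorphism.

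I do not expect any real obstacle: the only subtlety is keeping track of the homogenized case, in which the $2s$th and $(2s+1)$st components become isomorphic and $f$ might swap them. The pigeonhole argument handles this uniformly, since we only ever work within the pair $C_{2s}\cup C_{2s+1}$ and use finiteness there.
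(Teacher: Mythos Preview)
The paper does not actually prove this lemma; it simply declares that it (together with the next lemma) is ``easy to see.'' Your argument is a correct way to fill in that gap: the loop lengths $5s+1,\dots,5s+4$ determine the pair index $s$, so an embedding must carry $C_{2s}\cup C_{2s+1}$ into itself, and then finiteness of that pair forces surjectivity.

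One small correction to your phrasing. The parenthetical you give for ``$C'\cong C$'' (that the image carries an isomorphic copy of $C$ inside a single component) does not by itself yield $C'\cong C$; a priori $C'$ could be strictly larger than $C$. What your argument actually needs---and what the residue-class observation directly gives---is the weaker fact that any component $C'$ into which $C_{2s}$ embeds must contain a $(5s{+}k)$-loop for some $k\in\{1,2,3,4\}$, and hence must already be one of $C_{2s}$, $C_{2s+1}$. With that adjustment (drop the unjustified intermediate claim $C'\cong C$ and go straight to $C'\in\{C_{2s},C_{2s+1}\}$), the pigeonhole step and the rest of your proof go through exactly as written.
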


\begin{lemma}\label{lemma: graph isomorphisms}
    If $\A\cong\mathcal{M}_i^G$ for a $G$-computable directed graph $\mathcal{M}_i^G$ and $f^G:\A\to\mathcal{M}_i^G$ is an embedding defined on all of $\A$, then $f^G$ is an isomorphism.
\end{lemma}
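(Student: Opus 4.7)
The plan is to reduce Lemma \ref{lemma: graph isomorphisms} to the preceding Lemma \ref{lemma: embedding of a graph into itself is an isomorphism} by pulling back through an abstract isomorphism. Since the hypothesis supplies $\A \cong \mathcal{M}_i^G$, I would fix some isomorphism $h \colon \mathcal{M}_i^G \to \A$ (no effectiveness required, merely existence). Then $h \circ f^G \colon \A \to \A$ is a composition of embeddings and hence is itself an embedding of $\A$ into itself.

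Now I would invoke Lemma \ref{lemma: embedding of a graph into itself is an isomorphism} to conclude that $h \circ f^G$ is an isomorphism of $\A$. Since $h$ is invertible, so is $f^G = h^{-1} \circ (h \circ f^G)$, a composition of isomorphisms, which gives an isomorphism $\A \to \mathcal{M}_i^G$. Because $f^G$ was already assumed to be an embedding defined on all of $\A$, the only new information extracted is surjectivity onto $\mathcal{M}_i^G$, and this is exactly what follows from the surjectivity of $h \circ f^G$ combined with the bijectivity of $h$.

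Since the substantive work is entirely encapsulated in the earlier Lemma \ref{lemma: embedding of a graph into itself is an isomorphism}, I do not expect any real obstacle here. The only delicate point for that earlier lemma is that a self-embedding of $\A$ must actually be surjective; this follows from the observation that the connected components of $\A$ are finite rooted ``bouquets'' of loops, and that each isomorphism type of component occurs in $\A$ with multiplicity at most $2$ (since the loop lengths involve $5s+1$ and $5s+2$, which pin down the index $s$). An embedding must respect isomorphism type of components and cannot collapse two distinct roots, so its restriction to the finitely many components of any fixed isomorphism type is an injection of a finite set into itself, hence a bijection.
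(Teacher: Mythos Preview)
Your proposal is correct; the paper itself offers no explicit argument beyond stating that the lemma is easy to see, so your reduction to Lemma~\ref{lemma: embedding of a graph into itself is an isomorphism} via an abstract isomorphism $h$ is exactly the kind of routine verification the paper leaves implicit.
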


We now prove the following lemma on challenges issued by $P$-strategies during the construction.

\begin{lemma}
    An $S$-strategy $\alpha$ will be challenged by at most one $P$-strategy at any given stage.
\end{lemma}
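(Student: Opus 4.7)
The plan is to rule out two simultaneous active challengers by analyzing when challenges can be issued and when they must be removed. Suppose for contradiction that at the end of some stage $s$ the strategy $\alpha$ is challenged by two distinct $P$-strategies $\gamma_1$ and $\gamma_2$. A $P$-strategy $\gamma$ issues a challenge to $\alpha$ only while acting in Case 3 of the $P$-strategy module, which requires $\gamma\subseteq\pi_t$ with $\alpha^\frown\<\infty\>\subseteq\gamma$; hence both $\gamma_1$ and $\gamma_2$ extend $\alpha^\frown\<\infty\>$. For $i=1,2$, let $s_i\leq s$ be the stage at which the currently active challenge from $\gamma_i$ was issued; by the definition of ``currently active,'' the $\gamma_i$-challenge has been continuously active from $s_i$ through $s$. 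Without loss of generality assume $s_1\leq s_2$.

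The first step is to show the strict inequality $s_1<s_2$. Suppose for contradiction that $s_1=s_2=:t$. Both $\gamma_1,\gamma_2\subseteq\pi_t$, and since $\pi_t$ is linearly ordered by $\subseteq$, we may assume $\gamma_1\subsetneq\gamma_2$. At stage $t$ the strategy $\gamma_1$ is in Case 3 and takes outcome $w_2$, forcing $\pi_t\supseteq\gamma_1^\frown\<w_2\>$ and hence $\gamma_2\supseteq\gamma_1^\frown\<w_2\>$. For $\gamma_2$ to be in Case 3 at $t$, $\gamma_2$ must have taken outcome $w_1$ at its previous $\gamma_2$-stage. But $\gamma_2$ is on the path only when $\gamma_1$ takes $w_2$, and after any such earlier $w_2$-stage of $\gamma_1$ the strategy $\gamma_1$ remains in Case 4 or 5 taking outcome $s$ until it is initialized. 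If $\gamma_1$ was initialized between its previous $w_2$-stage and $t$, then $\gamma_2\supsetneq\gamma_1$ was also initialized, so at its next visit $\gamma_2$ would be in Case 1 and certainly not in Case 3. This contradicts $\gamma_2$ issuing its challenge at $t$, so $s_1<s_2$.

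The second step derives a contradiction from $s_1<s_2$. Since $\gamma_2\subseteq\pi_{s_2}$ and $\alpha^\frown\<\infty\>\subseteq\gamma_2$, we have $\alpha^\frown\<\infty\>\subseteq\pi_{s_2}$, so $\alpha$ takes outcome $\infty$ at stage $s_2$. Because the $\gamma_1$-challenge is continuously active from $s_1<s_2$ through $s$, it is active at the start of $s_2$, so $\alpha$ acts at $s_2$ via Subcase 2 of Case 2 of the $S_i$-strategy. Inspecting that subcase, $\alpha$ takes outcome $\infty$ only in the branch where $n_\alpha>n_{\gamma_1}$, and in that branch the $\gamma_1$-challenge is explicitly removed. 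Hence the $\gamma_1$-challenge is inactive at the end of stage $s_2$, contradicting continuous activity through $s\geq s_2$.

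The main obstacle is the simultaneous-issuance subcase $s_1=s_2$; once that is excluded, the strict inequality $s_1<s_2$ combines with the definition of Subcase 2 to force $\alpha$ to remove $\gamma_1$'s challenge before $\gamma_2$ can act. Ruling out $s_1=s_2$ requires carefully tracking what outcomes $\gamma_1$ must have taken at previous stages in order for $\gamma_2\supsetneq\gamma_1$ to be eligible and in Case 3 at the same stage at which $\gamma_1$ issues its challenge.
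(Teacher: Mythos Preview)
Your proof is correct and follows essentially the same approach as the paper's. The paper argues more tersely: when a $P$-strategy $\beta$ challenges $\alpha$ it takes $w_2$ for the first time, so any $P$-strategy extending $\beta^\frown\langle w_2\rangle$ is in Case~1 and cannot challenge (your same-stage argument in contrapositive form), and thereafter $\alpha$ takes finitary outcomes until the challenge is removed, so no $P$-strategy extending $\alpha^\frown\langle\infty\rangle$ is eligible (your different-stage argument in contrapositive form). Your version simply unpacks these two observations into an explicit contradiction argument.
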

\begin{proof}
    Let $\alpha$ be an $S$-strategy and let $\beta$ be a $P$-strategy such that $\beta\supseteq\alpha^\frown\<\infty\>$. If $\beta$ challenges $\alpha$ at some stage $s$, then it takes the $w_2$ outcome for the first time. The $P$-strategies extending $\beta^\frown\<w_2\>$ will then wait as in \textit{Case 1}, and so will not challenge $\alpha$. Until $\alpha$ can meet its challenge, it will continue to take the $w_{n_\beta}$ outcome, and so $P$-strategies extending $\alpha^\frown\<w_{n_\beta}\>$ will not be able to challenge $\alpha$ since $w_{n_\beta}\neq\infty$.
\end{proof}

The next two lemmas will prove facts about our $1$-generic $G$ that will be useful for our $S$-strategies.

\begin{lemma}\label{lemma: G is Delta_2}
    $G$ is $\Delta_2^0$.
\end{lemma}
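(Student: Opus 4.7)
The plan is to verify that the computable approximation $G[s]$ produced during the construction converges pointwise, since this immediately yields that $G$ is the limit of a uniformly computable sequence and hence $\Delta_2^0$. The map $s \mapsto G[s]$ is computable because $G[s]$ is initialized to $\emptyset$ and at each stage only finitely many strategies act, each modifying $G$ in an explicit, computable way. Thus the content of the lemma is the existence of $\lim_s G[s](n)$ for every $n$.

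First I would classify the actions that can alter the approximation: only $R$-strategies in \textit{Case 3}, which set $G[s] = \tau^\frown 0^\omega$ for some $\tau \supseteq G[s-1]\restriction n_\alpha$, and $P$-strategies in \textit{Case 3}, which enumerate a previously declared $S$-use $u_{\beta,n_\alpha}$ into $G[s]$, change the approximation. In the first case, $G[s-1]\restriction n_\alpha$ is preserved, so only positions $\geq n_\alpha$ can flip; in the second case, only the single position $u_{\beta,n_\alpha}$ is affected. Hence any change at a specific position $n$ must be triggered by a strategy whose relevant parameter ($n_\alpha$ or $u_{\beta,n_\alpha}$) is at most $n$.

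Now fix $n$ and bound the number of changes at position $n$. Every parameter of the form $n_\alpha$ or $u_{\beta,n_\alpha}$ is declared ``large'' the moment it is defined, meaning it exceeds the current stage and all previously mentioned numbers. Consequently, any such parameter defined at a stage $s > n$ is automatically $> n$ and cannot contribute to a change at position $n$. So only parameters associated with strategies that have already acted by stage $n+1$ can ever be $\leq n$, and there are only finitely many such strategies. Within a single uninitialized lifetime, each such strategy changes $G$ at most once: an $R$-strategy passes through \textit{Case 3} exactly once before settling into the $w_2 / s$ outcomes, and a $P$-strategy enumerates its use exactly once. If such a strategy is ever initialized after stage $n+1$, the parameters it redefines are again large and therefore exceed $n$, producing no further changes at position $n$.

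Summing over the finitely many strategies that can ever hold a parameter $\leq n$ and the finitely many lifetimes each contributes before its parameters permanently exceed $n$, we conclude that $G[s](n)$ stabilizes, so $G(n) = \lim_s G[s](n)$ exists for every $n$ and $G \in \Delta_2^0$. The only mildly delicate point is ensuring that initialization genuinely resets the parameters, so that any post-initialization redefinition of $n_\alpha$ or $u_{\beta,n_\alpha}$ is forced to be large relative to the stage at which it occurs; this is immediate from the construction but deserves an explicit mention in the write-up.
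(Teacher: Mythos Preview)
Your proposal is correct and follows essentially the same approach as the paper: both argue that only $R$- and $P$-strategies alter the approximation, that each such change below a fixed position $n$ is tied to a parameter chosen ``large'' (hence exceeding the stage at which it is declared), and that consequently only finitely many strategy-lifetimes can ever affect position $n$. Your write-up is in fact slightly more explicit than the paper's about which parameter governs which kind of change ($n_\alpha$ for $R$-strategies, $u_{\beta,n_\alpha}$ for $P$-strategies) and about handling multiple lifetimes before stage $n+1$, but the underlying idea is identical.
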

\begin{proof}
    Let $m$ be arbitrary and we will show that $G$ can only change finitely often below $m$. The only strategies which can change $G$ are the $R$- and $P$-strategies. If $\alpha$ is a $P$-strategy, then $\alpha$ changes $G$ below $m$ if and only if its parameter $n_\alpha\leq m$. If $\alpha$ is instead an $R$-strategy, then $\alpha$ changes $G$ below $m$ if and only if there is an $S$-strategy $\beta^\frown\<\infty\>\subseteq\alpha$ such that $u_{\beta,n_\alpha}\leq m$. In either case, $\alpha$ will only act once to change $G$ according to its strategy unless it is initialized and chooses new large parameters. Since parameters are always chosen large (and so are never reused), only finitely many strategies can change $G$ below $m$.
\end{proof}

\begin{lemma}\label{lemma: true copies of components remain}
    If $\A\cong\mathcal{M}_i^G$, then for each $n$, there is an $s$ such that for all $t\geq s$, the true copies of the $2n$th and $(2n+1)$st components of $\A$ in $\mathcal{M}_i^G$ are the oldest and lexicographically least isomorphic copies in $\mathcal{M}_i^G[t]$ of these components.
\end{lemma}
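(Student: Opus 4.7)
The approach is to combine the $\Delta_2^0$-ness of $G$ (Lemma \ref{lemma: G is Delta_2}) with the observation that each fixed pair of $\A$-components is modified by only finitely many strategies during the construction. Fixing $n$, I would first argue that the $2n$th and $(2n+1)$st components of $\A$ reach a final configuration at some stage $s_A$: these components are created at a single stage and are subsequently modified only by diagonalization from at most one $P$-strategy with $n_\beta=n$ (which adds specific loops) and by homogenization when such a $P$-strategy falls to the left of the true path (producing a fixed terminal configuration). Since each of these actions occurs only finitely many times on this pair of components, they stabilize by some stage $s_A$.

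Next, I would locate the true copies $T_{2n}$ and $T_{2n+1}$ of these components inside the limit $\mathcal{M}_i^G$, which exist because $\A\cong\mathcal{M}_i^G$. Each edge of $T_{2n}\cup T_{2n+1}$ is witnessed by a $\Phi_i^G$-computation with finite use; let $u$ be the maximum of these uses. By Lemma \ref{lemma: G is Delta_2}, there is a stage $s_G\geq s_A$ such that $G[t]\restriction(u+1)$ is constant for all $t\geq s_G$, so every edge of $T_{2n}\cup T_{2n+1}$ is continuously present in $\mathcal{M}_i^G[t]$ from $s_G$ onward. Hence $T_{2n}$ and $T_{2n+1}$ have constant ages $a_{2n}$ and $a_{2n+1}$ at every such stage.

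Finally, I would argue that for large enough $t$, no competing isomorphic subgraph outstrips them. Any copy $D$ in $\mathcal{M}_i^G[t]$ distinct from $T_{2n}$ and $T_{2n+1}$ with $\age(D)\leq\max(a_{2n},a_{2n+1})$ must have all of its vertices already present in the finite graph $\mathcal{M}_i^G[\max(a_{2n},a_{2n+1})]$, so only finitely many such $D$ can occur. For each such $D$, either $D$ persists as a subgraph of the limit $\mathcal{M}_i^G$, in which case the fact that component isomorphism types are pinned down by their loops of sizes $5n+1,\ldots,5n+4$ forces $D\in\{T_{2n},T_{2n+1}\}$; or some edge of $D$ is absent from the limit and, by $\Delta_2^0$-ness of $G$, $D$ fails to be fully present from some finite stage onward. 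Choosing $s$ larger than $s_G$ and larger than the last stage of full presence of every false competitor then proves the lemma, with lex-least tiebreaking handling the homogenized case where $T_{2n}\cong T_{2n+1}$. The main subtle point, I expect, is this competitor-count argument: ruling out the scenario in which an unbounded parade of transient isomorphic subgraphs keeps producing new low-age witnesses forever, which the restriction to vertices already inside the finite-stage approximation $\mathcal{M}_i^G[\max(a_{2n},a_{2n+1})]$ is what makes it go through.
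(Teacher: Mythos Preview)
Your proposal is correct and follows the same approach as the paper, hinging on Lemma~\ref{lemma: G is Delta_2} to argue that once $G$ stabilizes below the relevant uses, the true copies persist and dominate. The paper's own proof is much terser: it simply asserts that after $G[s_n]\restriction u_{2n}=G\restriction u_{2n}$ the true copy ``will become the oldest and lexicographically least isomorphic copy,'' without spelling out the competitor argument you supply (finitely many candidate subgraphs with small age, each either coinciding with a true copy in the limit or eventually losing an edge), so your version is a legitimate fleshing-out rather than a different route.
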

\begin{proof}
    By Lemma \ref{lemma: G is Delta_2}, we have that $G$ is $\Delta_2^0$. If $\A\cong\mathcal{M}_i^G$, then for each $n$, we have true copies of the $2n$th and $(2n+1)$st components of $\A$ in $\mathcal{M}_i^G$. Suppose that the associated $G$-uses for the true copies of the $2n$th and $(2n+1)$st components are $u_{2n}$ and $u_{2n+1}$, respectively. Since $G$ is $\Delta_2^0$, there exists a sufficiently large stage $s_n$ such that $G[s_n]\restriction u_{2n}=G\restriction u_{2n}$, and so after stage $s_n$, the copy of the $2n$th component in $\A$ will become the oldest and lexicographically least isomorphic copy of the component in $\mathcal{M}_i^G$. The same holds true for the $(2n+1)$st component of $\A$.
\end{proof}





\subsubsection{Lemmas for each strategy}
For each strategy in our construction, we prove several key lemmas. We begin with the $P$- and $R$-strategies.

\begin{lemma}\label{lemma: waiting extends upwards}
    Let $\alpha$ be a $P$- or $R$-strategy. If $\beta$ is another $P$- or $R$-strategy such that $\beta^\frown\<w_0\>\subseteq\alpha$, then $\alpha$ takes the $w_0$ outcome at every $\alpha$-stage.
\end{lemma}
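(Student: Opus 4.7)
The plan is to observe that $\alpha$'s ability to move past its own Case 1 is directly blocked by $\beta$ sitting in Case 1. At any $\alpha$-stage $s$, the current true path $\pi_s$ passes through $\alpha$, and hence through the initial segment $\beta^\frown\langle w_0\rangle$. This means that at every $\alpha$-stage, $\beta$ is itself taking the $w_0$ outcome. Reading off the $P$- and $R$-strategy descriptions, the $w_0$ outcome is taken precisely in Case 1, and in Case 1 the parameter $n_\beta$ has not yet been defined (it is only set to a large value in Case 2). So I would first note that at every $\alpha$-stage, $n_\beta$ is undefined.

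Next I would feed this into $\alpha$'s own Case 1. Whether $\alpha$ is a $P$- or an $R$-strategy, its Case 1 requires $\alpha$ to wait to define $m_\alpha = \max\{n_{\beta'} : \beta' \subset \alpha\}$ until $n_{\beta'}$ has been defined for every $\beta' \subset \alpha$. Since $\beta \subset \alpha$ and $n_\beta$ is undefined at every $\alpha$-stage, the condition for defining $m_\alpha$ fails. Therefore $\alpha$ cannot define $m_\alpha$, cannot advance to Case 2, and by the construction takes the $w_0$ outcome at this stage.

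I should also briefly handle the possibility that $\alpha$ had already progressed past Case 1 at some earlier stage and is now retreating. If $\alpha$ had previously defined $n_\alpha$ (or had any parameters set), then at that earlier stage the true path $\pi_{s'}$ passed through $\alpha$; but then $\pi_{s'}$ also extended $\beta^\frown\langle w_0\rangle$, forcing $n_\beta$ to be undefined, which is impossible if $\alpha$'s construction requires $n_\beta$ already defined. So in fact $\alpha$ never leaves Case 1 at any $\alpha$-stage, and so always outputs $w_0$.

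The only mild subtlety is the bookkeeping about initialization: one must ensure that the characterization ``$w_0$ outcome $\Leftrightarrow$ Case 1 $\Leftrightarrow$ $n_\beta$ undefined'' is truly invariant across $\beta$-stages, including stages where $\beta$ may have been initialized. But this is immediate from the strategy descriptions, since $n_\beta$ is only defined upon transition from $w_0$ to $w_1$ (Case 2), and initialization can only move $\beta$ back into Case 1 without ever skipping ahead. Hence the argument is essentially a one-line consequence of the construction once the correct invariants are unwound, and no priority-tree combinatorics beyond ``the true path at an $\alpha$-stage extends $\alpha$'' are needed.
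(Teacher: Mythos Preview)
Your proposal is correct and follows essentially the same approach as the paper's proof: at any $\alpha$-stage the path passes through $\beta^\frown\langle w_0\rangle$, so $n_\beta$ is undefined, hence $\alpha$ cannot define $m_\alpha$ (and thus $n_\alpha$) and remains in Case~1 taking outcome $w_0$. The paper compresses this into two sentences, while you spell out the invariants and the initialization bookkeeping explicitly, but the underlying argument is identical.
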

\begin{proof}
    If $\beta^\frown\<w_0\>\subseteq\alpha$ and $s$ is an $\alpha$-stage, then $n_\beta$ is not defined at stage $s$. Therefore, $\alpha$ doesn't define $n_\alpha$ at stage $s$, and so it takes the $w_0$ outcome.
\end{proof}

Once an $R$-strategy $\alpha$ defines its parameter, it will stay defined for the rest of the construction unless $\alpha$ is initialized. We show that if $\alpha$ is on the true path, then it will define an initial segment of $G$ that either meets or avoids its given c.e.\ set. We first begin with a short lemma.

\begin{lemma}\label{lemma: G does not change under n alpha}
    Let $\alpha$ be an $R$- or $P$-strategy that acts in \textit{Case 3} at stage $s$. Then $G[s]\restriction n_\alpha=G[s-1]\restriction n_\alpha$.
\end{lemma}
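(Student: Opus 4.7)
The plan is to split into two cases based on whether $\alpha$ is an $R$-strategy or a $P$-strategy, since these are the only two strategy types that can change $G$ and since Case 3 has different behavior in each.

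First I would handle the $R$-strategy case, which is essentially immediate from inspection of the construction. When an $R$-strategy $\alpha$ acts in \textit{Case 3} at stage $s$ and finds an extension $\tau$ of $G[s-1]\restriction n_\alpha$ with $\tau\in W_j[s]$, it sets $G[s]=\tau^\frown 0^\omega$. Since $\tau\supseteq G[s-1]\restriction n_\alpha$ by definition, we get $G[s]\restriction n_\alpha = G[s-1]\restriction n_\alpha$ directly. If no such $\tau$ is found, $\alpha$ does not modify $G$ at stage $s$ and the equality is trivial.

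Next I would handle the $P$-strategy case. When a $P$-strategy $\alpha$ acts in \textit{Case 3} at stage $s$ and sees $\Phi_e[s]$ looking like an isomorphism on the $2n_\alpha$-th and $(2n_\alpha+1)$-st components, the only modifications to $G$ it makes are the enumerations of uses $u_{\beta,n_\alpha}$ into $G$ for each $S$-strategy $\beta$ with $\beta^\frown\langle\infty\rangle\subseteq\alpha$ whose map $f_\beta^G$ had been defined on those components. So the claim reduces to showing $u_{\beta,n_\alpha}>n_\alpha$ for every such $\beta$. The key observation is that $u_{\beta,n_\alpha}$ was defined by $\beta$ at the stage where $\beta$ extended $f_\beta^G$ to the $2n_\alpha$-th and $(2n_\alpha+1)$-st components, i.e.\ when its internal counter reached $n_\alpha$. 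Since $\beta$ extends $f_\beta^G$ one index at a time via the module for checking for extensions, and since $n_\alpha$ was chosen large (in particular larger than the current value of $n_\beta$) at the stage $n_\alpha$ was defined in \textit{Case 2}, $\beta$ can only reach $n_\beta=n_\alpha$ at some later stage $t$ at which $n_\alpha$ is already defined. At that stage, $u_{\beta,n_\alpha}$ is set large, hence in particular $u_{\beta,n_\alpha}>n_\alpha$.

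The main subtlety, which I expect to be the only non-routine point, is justifying that $u_{\beta,n_\alpha}$ cannot have been defined \emph{before} $n_\alpha$. This is precisely where the convention that parameters and uses are always chosen ``large'' (larger than any number previously mentioned in the construction) pays off: $\alpha$ sits strictly below $\beta^\frown\langle\infty\rangle$ in the tree, so when $\alpha$ first defined $n_\alpha$ it was chosen larger than all existing parameters including $n_\beta$ at that moment, and $\beta$'s counter only moves by increments of one via its extension module. Putting this together with the immediate $R$-case above completes the proof.
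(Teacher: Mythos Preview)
Your proposal is correct and follows the same two-case split as the paper's proof. The paper handles the $R$-case identically (immediate from $G[s-1]\restriction n_\alpha\subseteq\tau$) and for the $P$-case simply asserts that ``since $u_{\beta,n_\alpha}$ was chosen large, we have that $u_{\beta,n_\alpha}>n_\alpha$''; your timing argument that $u_{\beta,n_\alpha}$ must be defined after $n_\alpha$ is just an explicit unpacking of what ``chosen large'' means here, so the approaches are essentially the same.
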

\begin{proof}
    If $\alpha$ is an $R$-strategy, this follows immediately because $G[s-1]\restriction n_\alpha\subseteq\tau$. If $\alpha$ is a $P$-strategy, then $\alpha$ may enumerate a use $u_{\beta,n_\alpha}$ into $G$ where $\beta$ is an $S$-strategy where $\beta^\frown\<\infty\>\subseteq\alpha$. In this case, since $u_{\beta,n_\alpha}$ was chosen large, we have that $u_{\beta,n_\alpha}>n_\alpha$ and so the only change to $G$ occurs above $n_\alpha$.
\end{proof}

\begin{lemma}\label{lemma: R parameters stay defined}
    Let $\alpha$ be an $R_j$-strategy that defines $n_\alpha$ at a stage $s_0$.
    \begin{enumerate}
        \item Unless $\alpha$ is initialized, at all stages $s>s_0$, we have that $G[s]\restriction n_\alpha=G[s_0]\restriction n_\alpha$.
        \item Suppose that $\alpha$ acts as in \textit{Case 3} at a stage $s_1>s_0$ with the string $\tau$ where $\tau\in W_j[s_1]$. Unless $\alpha$ is initialized, for all stages $s>s_1$, we have that $G[s]\restriction|\tau|=\tau$.
    \end{enumerate}
\end{lemma}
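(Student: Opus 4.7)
The plan is to prove both items by induction on $s$, reducing the inductive step to Lemma \ref{lemma: G does not change under n alpha}. The key observation is that any stage $s$ at which $G$ changes is a stage where some $P$- or $R$-strategy $\gamma$ acts in its \textit{Case 3}, so that lemma automatically protects every initial segment of length at most $n_\gamma$; it therefore suffices to show that any such $\gamma$ that does not initialize $\alpha$ must satisfy $n_\gamma > n_\alpha$ (for item (1)) or $n_\gamma > |\tau|$ (for item (2)).

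For item (1), I would induct on $s > s_0$ under the hypothesis that $\alpha$ is not initialized in $(s_0,s]$, with inductive assumption $G[s-1]\restriction n_\alpha = G[s_0]\restriction n_\alpha$. Let $\gamma$ be the strategy responsible for any change in $G$ at stage $s$. A quick case analysis on the position of $\gamma$ in the tree relative to $\alpha$ disposes of everything but one case: if $\gamma$ is a proper prefix of $\alpha$, then its \textit{Case 3} action makes it take outcome $w_2$, and since $\alpha \supseteq \gamma^\frown\<w_1\>$ with $w_2 <_L w_1$, this moves $\pi_s$ strictly to the left of $\alpha$ and initializes $\alpha$, a contradiction; strategies incomparable with and to the left of $\alpha$ are excluded for the same reason, while those to the right of $\alpha$ are not on $\pi_s$ and thus do not act. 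We are left with $\gamma \supsetneq \alpha$. Because $\alpha$ enters \textit{Case 2} at $s_0$ only after a \textit{Case 1} stage, its outcome sequence in this incarnation reads $w_0,\dots,w_0,w_1,\dots$, and any $\gamma \supseteq \alpha^\frown\<w_1\>$ was ineligible (hence had all parameters undefined) at every stage in this incarnation prior to $s_0$. So $n_\gamma$, if currently defined, was defined at a stage in $[s_0,s]$, i.e.\ after $n_\alpha$ was already in place, and the ``large'' convention in $\gamma$'s \textit{Case 2} yields $n_\gamma > n_\alpha$. Combining with Lemma \ref{lemma: G does not change under n alpha} and the inductive hypothesis closes the induction.

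Item (2) follows by running exactly the same induction starting from $s_1$ in place of $s_0$. Here the situation is even cleaner: $\alpha$'s transition from $w_1$ to $w_2$ at $s_1$ moves $\pi_{s_1}$ leftward through $\alpha$'s subtree, which initializes every $\gamma \supsetneq \alpha$; so any $\gamma$ later changing $G$ without initializing $\alpha$ defines its $n_\gamma$ at some stage $> s_1$, and choosing $n_\gamma$ large forces $n_\gamma > s_1 \geq |\tau|$. Lemma \ref{lemma: G does not change under n alpha} then protects the initial segment $\tau$, and the induction proceeds as before.

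The main obstacle is the persistence argument in item (1) — specifically, ruling out the scenario in which a proper extension $\gamma$ of $\alpha$ retains a small parameter $n_\gamma$ inherited from an earlier incarnation that predates $s_0$. I expect this to be handled cleanly by the two-pronged observation above: prior to $s_0$, $\alpha$ was in \textit{Case 1} taking $w_0$, so either $\pi$ was extending $\alpha^\frown\<w_0\>$ (making every $\gamma \supseteq \alpha^\frown\<w_1\>$ ineligible and thus unable to define parameters) or $\pi$ did not reach $\alpha$ at all (in which case $\gamma$ was outright initialized), and in either case no small value of $n_\gamma$ can survive into the interval $[s_0,s]$.
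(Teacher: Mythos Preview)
Your approach is essentially the paper's: both reduce to showing that any $P$- or $R$-strategy $\gamma$ changing $G$ without initializing $\alpha$ must satisfy $\gamma\supsetneq\alpha$ with $n_\gamma$ defined after the relevant threshold, and then invoke Lemma~\ref{lemma: G does not change under n alpha}. Your write-up is in fact more explicit than the paper's about why $n_\gamma>n_\alpha$ for such $\gamma$.

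One correction: in item~(2) you assert that the $w_1\to w_2$ transition at $s_1$ ``initializes every $\gamma\supsetneq\alpha$,'' but this is false --- strategies extending $\alpha^\frown\langle s\rangle$ lie to the \emph{left} of $\pi_{s_1}$ and are not initialized, and those extending $\alpha^\frown\langle w_2\rangle$ may lie on $\pi_{s_1}$. Your conclusion is still correct, since such $\gamma$ were never eligible in $\alpha$'s current incarnation prior to $s_1$ (because $\alpha$ had not yet taken outcome $w_2$ or $s$), so their parameters remain undefined by exactly the persistence argument you spelled out for item~(1). The same remark applies in item~(1), where you restrict attention to $\gamma\supseteq\alpha^\frown\langle w_1\rangle$: the cases $\gamma\supseteq\alpha^\frown\langle w_2\rangle$ and $\gamma\supseteq\alpha^\frown\langle s\rangle$ also need a word, but are handled identically.
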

\begin{proof}
    Let $\alpha$ and $n_\alpha$ be as above. For (1), by Lemma \ref{lemma: G does not change under n alpha} all $R$- or $P$-strategies $\beta$ where $\beta\supset\alpha$ cannot change $G[s]$ below $n_\alpha$, and hence cannot change $G$ below $n_\alpha$. Therefore, if there is a change to $G[s]$ below $n_\alpha$, it was caused by some $R$- or $P$-strategy $\delta$ acting in \textit{Case 3} with $\delta^\frown\<w_1\>\subseteq\alpha$. But when $\delta$ acts, it takes the $w_2$ outcome, which initializes $\alpha$.

    For (2), we have that $\alpha$ takes the $w_2$ outcome at stage $s_1$ and the $s$ outcome after stage $s_1$. All $\beta\supseteq\alpha^\frown\<w_2\>$ or $\beta\supseteq\alpha^\frown\<s\>$ will define large parameters $n_\beta>|\tau|$. So if there are any changes to $G$ below $|\tau|$, it is caused by some $R$- or $P$-strategy $\delta^\frown\<w_1\>\subseteq\alpha$. However, if $\delta$ changes $G$, then it takes the $w_2$ outcome, but this initializes $\alpha$.
\end{proof}

We prove something similar for the $S$-strategies.

\begin{lemma}
    Let $\alpha$ be an $S$-strategy and let $s_0<s_1$ be $\alpha$-stages such that $n_\alpha[s]$ has already been defined and $\alpha$ is not initialized between $s_0$ and $s_1$. Then 
    \[
    G[s_0]\restriction n_\alpha[s_0]=G[s_1]\restriction n_\alpha[s_0]
    \]
    unless some $R$- or $P$-strategy $\beta\supseteq\alpha^\frown\<\infty\>$ with $n_\beta<n_\alpha[s_0]$ acts between stages $s_0$ and $s_1$.
\end{lemma}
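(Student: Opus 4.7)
The plan is to argue the contrapositive: if $G\restriction n_\alpha[s_0]$ changes at some stage $t\in(s_0,s_1]$, locate the $R$- or $P$-strategy $\delta$ responsible (only such strategies modify $G$, and only in their \emph{Case 3}), and show that $\delta\supseteq\alpha^\frown\langle\infty\rangle$ with $n_\delta<n_\alpha[s_0]$. The bulk of the work is then a case analysis on the position of $\delta$ relative to $\alpha$ in the tree $T$.

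In the first case $\delta<_L\alpha$ or $\delta\subsetneq\alpha$, the act of $\delta$ taking its $w_2$ outcome at $t$ pushes the current true path strictly to the left of $\alpha$: if $\delta\subsetneq\alpha$, the outcome $o$ through which $\alpha$ extends $\delta$ lies to the right of $w_2$ in $\Lambda$, and in the borderline case $o=w_2$, the construction forces $\delta$ to switch to outcome $s<_\Lambda w_2$ on the very next $\delta$-stage, again moving the path left of $\alpha$. Either way, $\alpha$ is initialized somewhere in $(s_0,s_1]$, contradicting the hypothesis.

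In the second case $\alpha<_L\delta$ or $\delta\supseteq\alpha^\frown\langle o\rangle$ with $o\ne\infty$, the true path at $s_0$ passes through $\alpha$ and so lies to the left of $\delta$, either at $s_0$ itself or by the first stage in $(s_0,t]$ at which $\alpha$'s outcome differs from $o$. Hence $\delta$ is initialized at some stage in $[s_0,t)$, and its replacement $n_\delta$ is chosen large, in particular $n_\delta>n_\alpha[s_0]$. For an $R$-strategy $\delta$, Case 3 restricts the $G$-change to positions $\ge n_\delta$, and for a $P$-strategy the enumerated use $u_{\gamma,n_\delta}$ is defined by the witnessing $S$-strategy $\gamma$ only after $n_\delta$ is set, hence also exceeds $n_\alpha[s_0]$. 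In either subcase, no change to $G$ occurs below $n_\alpha[s_0]$, contradicting our choice of $\delta$.

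The only remaining possibility is $\delta\supseteq\alpha^\frown\langle\infty\rangle$, and a change to $G$ below $n_\alpha[s_0]$ then forces $n_\delta<n_\alpha[s_0]$: immediately for an $R$-strategy since changes are confined to positions $\ge n_\delta$, and for a $P$-strategy because a use $u_{\gamma,n_\delta}<n_\alpha[s_0]$ can only have been defined before $n_\alpha[s_0]$ was chosen, which by the large-parameter convention places $n_\delta<n_\alpha[s_0]$. The main technical obstacle is the sub-case in step two where $\alpha$'s outcome at $s_0$ already equals $o=w_k$, so that $\delta$ need not be initialized at $s_0$; one handles this by tracing back to the most recent initialization of $\delta$ and invoking the large-parameter convention at that earlier stage, using the fact that $\alpha$ must have taken an outcome to the left of $w_k$ (for instance $\infty$, the last time $\alpha$ extended $f_\alpha^G$) between that initialization and $s_0$, guaranteeing that the current $n_\delta$ was in fact set after $n_\alpha[s_0]$ and is therefore large relative to it.
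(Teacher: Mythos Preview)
Your overall approach---contrapositive plus a case analysis on the tree position of the acting strategy $\delta$---is exactly the paper's, though the paper is far terser: it first observes (via the lemma that a Case~3 action leaves $G\restriction n_\delta$ unchanged) that any $\delta$ changing $G$ below $n_\alpha[s_0]$ must satisfy $n_\delta < n_\alpha[s_0]$, and then dismisses the positions $\beta<_L\alpha$, $\beta\subseteq\alpha$, and $\alpha^\frown\langle\infty\rangle<_L\beta$ in one line each.

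Your handling of the flagged sub-case, however, is wrong as written. You claim that $\alpha$ ``must have taken an outcome to the left of $w_k$ between [the most recent initialization of $\delta$] and $s_0$''; but any such outcome would itself initialize $\delta$, contradicting ``most recent,'' and it certainly does not force $n_\delta$ to be set \emph{after} $s_0$. The correct argument runs the other way. Let $t'$ be the stage at which $\delta$'s current $n_\delta$ was defined. If $t'\geq s_0$, then $n_\delta$ was chosen large after $n_\alpha[s_0]$ existed, so $n_\delta>n_\alpha[s_0]$ and there is nothing to do. If $t'<s_0$, then at $t'$ the path passed through $\alpha^\frown\langle w_k\rangle$, so $n_\alpha[t']=k$ and hence $n_\delta>k$; moreover $\delta$ is not initialized on $[t',t]$, so the path never moves strictly left of $\alpha^\frown\langle w_k\rangle$ there, forcing $\alpha$'s outcome at $s_0$ to be some $w_j$ with $j\leq k$. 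Thus $n_\alpha[s_0]=j\leq k<n_\delta$, and again no change occurs below $n_\alpha[s_0]$. The same shortcut (use $n_\delta<n_\alpha[s_0]$ directly via Lemma~\ref{lemma: G does not change under n alpha}) also tidies your Case~3 argument for $P$-strategies: reasoning about when $n_\alpha[s_0]$ was ``chosen'' does not quite parse, since for an $S$-strategy $n_\alpha$ is an incrementing counter rather than a large parameter.
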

\begin{proof}
    By Lemma \ref{lemma: R parameters stay defined}, $G$ can change below $n_\alpha[s_0]$ only if an $R$- or $P$-strategy $\beta$ with $n_\beta<n_\alpha[s_0]$ acts. Therefore, we cannot have that $\alpha^\frown\<\infty\><_L\beta$. We also cannot have that $\beta<_L\alpha$ or $\beta\subseteq\alpha$ since $\alpha$ would get initialized. Hence, it must be that $\alpha^\frown\<\infty\>\subseteq\beta$.
\end{proof}

We now prove that if a $P_e$-strategy $\alpha$ takes action to add diagonalizing loops to a pair of $A$-components, then the diagonalization against $\Phi_e$ on these components will remain at the end of the construction.

\begin{lemma}\label{lemma: P diagonalization works as intended}
    Let $\alpha$ be a $P$-strategy. Suppose that $\alpha$ adds diagonalizing loops to the $2n_\alpha$th and $(2n_\alpha+1)$st components of $\A$ at a stage $s_0$. Unless $\alpha$ is initialized, these components are not homogenized at a future stage.
\end{lemma}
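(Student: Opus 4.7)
The plan is to trace every place in the construction at which a pair of components of $\A$ can be homogenized, and to argue that the only such mechanism, applied to the specific components in the statement, forces $\alpha$ to be initialized. Inspecting the formal construction, the only clause that ever homogenizes $\A$-components (beyond the initial addition of loops) is the initialization step: at any stage $s$, for every $P$-strategy $\beta$ with $\pi_s <_L \beta$ whose $2n_\beta$-th and $(2n_\beta+1)$-st components have already received their $(5n_\beta+3)$- and $(5n_\beta+4)$-loops, we initialize $\beta$ and homogenize those two components in $\A$ (and in $\B$). So if the components from the statement are ever homogenized at some stage $s > s_0$, this clause must fire at $s$ for some $P$-strategy $\beta$ whose diagonalized pair is exactly the $2n_\alpha$-th and $(2n_\alpha+1)$-st components.

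The key auxiliary step is the uniqueness of ``large'' parameters. Each time any $P$- or $R$-strategy (re)defines its parameter, it picks a value strictly larger than everything mentioned previously in the construction. Consequently, the values ever assigned as parameters across the entire construction are pairwise distinct, and in particular no two $P$-strategies ever hold the same parameter value, either simultaneously or at distinct stages. The $(5n+3)$- and $(5n+4)$-loops on the $2n$-th and $(2n+1)$-st components of $\A$ can furthermore only be added by a $P$-strategy $\gamma$ executing Case 3 with $n_\gamma = n$ (no other case in the construction adds loops). Combining these, the $P$-strategy $\gamma$ that added the $(5n_\alpha+3)$- and $(5n_\alpha+4)$-loops at $s_0$ is uniquely $\alpha$, and the only $P$-strategy that could ever satisfy ``$n_\beta = n_\alpha$ and the $(5n_\beta+3)$- and $(5n_\beta+4)$-loops have been added'' is $\alpha$ itself.

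Putting the two pieces together gives the lemma by contrapositive: suppose the components are homogenized at some $s > s_0$. The triggering $P$-strategy $\beta$ from the initialization clause satisfies $n_\beta = n_\alpha$ and has diagonalized those components, so by uniqueness $\beta = \alpha$. Therefore $\pi_s <_L \alpha$, so at stage $s$ the general initialization rule applies to $\alpha$, and $\alpha$ is initialized at stage $s$.

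The main obstacle is the bookkeeping around ``large'': one has to verify that the intended convention really does make all parameter values globally distinct (including across reinitializations and across different requirement types) and that no clause outside Case 3 of a $P$-strategy can add $(5n+3)$- or $(5n+4)$-loops to the relevant components. Once those bookkeeping points are nailed down, the uniqueness-of-trigger argument above gives the result immediately.
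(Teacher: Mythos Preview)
Your proposal is correct and follows the same approach as the paper's proof: the only mechanism that homogenizes diagonalized components is the end-of-stage clause that fires when the current path moves strictly left of the $P$-strategy owning those components, which simultaneously initializes that strategy. Your write-up is more explicit than the paper's one-line proof in spelling out the parameter-uniqueness bookkeeping (so that the triggering $\beta$ must equal $\alpha$), but this is exactly the implicit content of the paper's assertion that ``we only homogenize these components if the path moves to the left of $\alpha$.''
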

\begin{proof}
    Let $\alpha$ be as in the statement of the lemma. We only homogenized these components if the path moves to the left of $\alpha$, in which case $\alpha$ is initialized.
\end{proof}

We now prove several key facts for the $S$-strategies that will help with the main verification lemma.

\begin{lemma}\label{lemma: P strategy - lower bounds}
    Let $\alpha$ be an $S$-strategy and $\beta$ be a $P$-strategy such that $\alpha^\frown\<\infty\>\subseteq\beta$. Suppose that there are stages $t_0<t_1<t_2<t_3$ such that $\beta$ defines $m_\beta$ at $t_0$, $\beta$ defines $n_\beta$ at $t_1$, $\beta$ acts in \textit{Case 3} at $t_2$, $t_3$ is the first $\alpha$-stage after $t_2$, and $\beta$ is not initialized between $t_0$ and $t_3$. Then,
    \begin{enumerate}
        \item$n_\alpha[t_1]<n_\beta[t_1]$, and for all $S$-strategies $\gamma$ such that $\gamma^\frown\<\infty\>\subseteq\beta$, $f_\gamma^G$ is defined on the $2m_\beta$th and $(2m_\beta+1)$st components of $\A$ with $n_\beta[t_1]>u_{\gamma,m_\beta}[t_1]$;
        \item for $t_1\leq t<t_2$, $n_\alpha[t]>m_\beta[t]=m_\beta[t_0]$; and
        \item $n_\alpha[t_3]>m_\beta[t_3]=m_\beta[t_0]$.
    \end{enumerate}
\end{lemma}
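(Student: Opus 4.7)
The plan is to establish each of the three items in turn using two principles: parameters chosen by $P$- and $R$-strategies are always ``large,'' and any strategy that can act during $[t_0,t_3]$ without initializing $\beta$ must extend $\beta^\frown\<w_1\>$ (once $\beta$ has reached its $w_1$ outcome at $t_1$). The main technical obstacle is item (2): showing $n_\alpha$ stays strictly above $m_\beta$ throughout $[t_1,t_2)$ requires ruling out every possible cause of the $f_\alpha^G$-computation on the $2k$-th and $(2k+1)$-st components vanishing for some $k\le m_\beta$.

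For item (1), I would unpack Case 1 of the $P$-strategy for $\beta$: between $t_0$ and $t_1$, $\beta$ sits at outcome $w_0$ waiting for $f_\gamma^G$ to converge on the $2m_\beta$-th and $(2m_\beta+1)$-st components of $\A$ for each $S$-strategy $\gamma^\frown\<\infty\>\subseteq\beta$. The transition to Case 2 at $t_1$ happens precisely when this waiting condition is met, which gives the convergence clause. The parameter $n_\beta$ is then chosen large, meaning strictly greater than every number used in the construction by $t_1$; this yields both $n_\beta[t_1]>n_\alpha[t_1]$ and $n_\beta[t_1]>u_{\gamma,m_\beta}[t_1]$ for each such $\gamma$.

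For item (2), the equality $m_\beta[t]=m_\beta[t_0]$ is immediate since $m_\beta$ is set once in Case 1 and only redefined upon initialization, which is excluded by hypothesis. Applying item (1) with $\gamma=\alpha$ tells us that $f_\alpha^G$ is defined on the $2m_\beta$-th and $(2m_\beta+1)$-st components at $t_1$, so $n_\alpha[t_1]>m_\beta$ by definition of $n_\alpha$. To preserve this across $[t_1,t_2)$, I would show that the $f_\alpha^G$-computation on the $2k$-th and $(2k+1)$-st components is never destroyed for any $k\le m_\beta$. Such a destruction would require a $G$-change at a position $\le u_{\alpha,m_\beta}[t_1]$, which can only come from an $R$- or $P$-strategy $\delta$ acting in its Case 3. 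Any such $\delta$ active in this interval without initializing $\beta$ must satisfy $\delta\supseteq\beta^\frown\<w_1\>$: a $\delta\subsetneq\beta$ taking the outcome $w_2$ would move the path strictly to the left of $\beta$, and any $\delta$ incomparable with $\beta$ being visited would likewise put the path left of $\beta$. Because $\delta$ is first visited strictly after $t_1$, it chooses $n_\delta$ large, so $n_\delta>u_{\alpha,m_\beta}[t_1]$, and the $G$-change it causes (either via a $P$-challenge enumerating $u_{\alpha,n_\delta}$ or via an $R$-strategy redefining $G$ beyond position $n_\delta$) occurs strictly above $u_{\alpha,m_\beta}[t_1]$. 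By monotonicity of $u_{\alpha,k}$ in $k$, only indices $>m_\beta$ are affected, and any ensuing reset of $n_\alpha$ via Subcase 2 or Subcase 3 leaves $n_\alpha>m_\beta$.

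For item (3), I consider $\beta$'s Case 3 action at $t_2$. If $f_\alpha^G$ is not defined on the $2n_\beta$-th and $(2n_\beta+1)$-st components of $\A$ at $t_2$, then $\beta$ only adds loops and does not enumerate into $G$; no newly visited strategy can change $G$ between $t_2$ and $t_3$ because all such strategies begin in their respective Case 1, so the argument of item (2) extends through $t_3$. Otherwise $\beta$ enumerates $u_{\alpha,n_\beta}[t_2]$ into $G$, which by monotonicity of uses destroys exactly the $f_\alpha^G$-computations for indices $\ge n_\beta$ while leaving all earlier indices intact. Then at $t_3$, $\alpha$ falls into Subcase 2 and resets $n_\alpha$ to the least index where $f_\alpha^G$ is undefined, which is $n_\beta$. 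Since $n_\beta>m_\beta$ by item (1), we conclude $n_\alpha[t_3]\ge n_\beta>m_\beta$ in this case as well.
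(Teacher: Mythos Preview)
Your argument is correct and follows essentially the same route as the paper's proof: for (1) you unpack the waiting condition in Case~1 and the largeness of $n_\beta$; for (2) you argue that any $R$- or $P$-strategy acting without initializing $\beta$ must have a freshly chosen (hence large) parameter, so it cannot disturb $f_\alpha^G$ on indices $\le m_\beta$; and for (3) you check that $\beta$'s enumeration at $t_2$ leaves the $f_\alpha^G$-computation on the $2m_\beta$th and $(2m_\beta+1)$st components intact. This is exactly the paper's strategy.

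One small overclaim in your item (3): you assert that the reset value of $n_\alpha$ at $t_3$ is \emph{exactly} $n_\beta$, on the grounds that only indices $\ge n_\beta$ are destroyed. This is stronger than needed and not fully justified, because $\beta$ enumerates $u_{\gamma,n_\beta}$ for \emph{every} $S$-strategy $\gamma$ with $\gamma^\frown\langle\infty\rangle\subseteq\beta$, not just $\alpha$; one of these other uses could in principle fall below $u_{\alpha,m}$ for some $m$ strictly between $m_\beta$ and $n_\beta$. What you can cleanly argue (and what the paper argues) is that every such use satisfies $u_{\gamma,n_\beta}>n_\beta>u_{\alpha,m_\beta}$, so the $f_\alpha^G$-computation on indices $\le m_\beta$ survives, giving $n_\alpha[t_3]>m_\beta$. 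That weaker bound is all the lemma asserts, so your conclusion stands.
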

\begin{proof}
    Note that the values of $m_\beta$ and $n_\beta$ do not change once they are defined since $\beta$ is not initialized. For ($1$), $\beta$ defines $n_\beta$ large at stage $t_1$, and so $n_\beta[t_1]>n_\alpha[t_1]$. In addition, $\beta$ does not define $n_\beta$ until all $S$-strategies $\gamma^\frown\<\infty\>\subseteq\beta$ have $n_\gamma>m_\beta$. Therefore, when $n_\beta$ is chosen large at stage $t_1$, we have that $n_\beta[t_1]>u_{\gamma,m_\beta}[t_1]$.

    For ($2$), when $\beta$ defines $n_\beta$ at stage $t_1$, $f_\alpha^G$ is defined on the $2m_\beta$th and $(2m_\beta+1)$st components of $\A$. Therefore, we have that $n_\alpha[t_1]>m_\beta[t_0]$. At stage $t_1$, $\beta$ takes the $w_1$ outcome for the first time, and so all strategies $\gamma$ where $\beta^\frown\<w_1\>\subseteq\gamma$ choose parameters larger than $n_\alpha[t_1]$. In particular, none of these strategies can lower the value of $n_\alpha$ below $m_\beta$. The only other strategies which can lower $n_\alpha$ without initializing $\alpha$ are $\gamma$ where $\alpha^\frown\<\infty\>\subseteq\gamma\subset\beta$. However, these $\gamma$ would initialize $\beta$. Hence, we have that for all $t_1\leq t<t_2$ that $n_\alpha[t]>m_\beta$. In addition, at stage $t_2$, $\beta$ acts after $\alpha$, so $n_\alpha$ does not get redefined to reflect $\beta$'s action until the $\alpha$-stage $t_3$. It follows that $n_\alpha[t]>m_\beta$ for $t_2\leq t<t_3$.

    For ($3$), at stage $t_3$, $\alpha$ sets $n_\alpha[t_3]$ to be the least $m$ such that $f_\alpha^G[t_3]$ is not defined on the $2m$th and $(2m+1)$st components of $\A$. At stage $t_2$, $\beta$ enumerated $u_{\gamma,n_\beta}$ for $S$-strategies $\gamma^\frown\<\infty\>\subseteq\beta$. However, $u_{\gamma,n_\beta}>n_\beta$ because $u_{\gamma,n_\beta}$ was defined large when it was chosen and $n_\beta>u_{\alpha,m_\beta}$ by ($1$). Therefore, the computation $f_\alpha^G$ on the $2m_\beta$th and $(2m_\beta+1)$st components is not destroyed by $\beta$'s action at stage $t_2$. It follows that $n_\alpha[t_3]>m_\beta[t_3]=m_\beta[t_0]$. 
\end{proof}

\begin{lemma}\label{lemma: R strategy - lower bounds}
    Let $\alpha$ be an $S$-strategy and $\beta$ be an $R$-strategy such that $\alpha^\frown\<\infty\>\subseteq\beta$. Suppose that there are stages $t_0<t_1<t_2<t_3$ such that $\beta$ defines $m_\beta$ at $t_0$, $\beta$ defines $n_\beta$ at $t_1$, $\beta$ acts in \textit{Case 3} at $t_2$, $t_3$ is the next $\alpha$-stage after $t_2$, and $\beta$ is not initialized between $t_0$ and $t_3$. Then,
    \begin{enumerate}
        \item $m_\beta<n_\alpha[t_1]<n_\beta[t_1]$ and $n_\beta[t_1]>u_{\alpha,m_\beta}[t_1]$, which is the use of $f_\alpha^G[t_1]$ on the $2m_\beta$th and $(2m_\beta+1)$st components of $\A$;
        \item for all $t_1\leq t<t_3$, $n_\alpha[t]>m_\beta$; and
        \item $n_\alpha[t_3]>m_\beta$.
    \end{enumerate}
\end{lemma}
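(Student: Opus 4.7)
The plan is to adapt the proof of Lemma \ref{lemma: P strategy - lower bounds} almost verbatim, since the structure of the $R$-strategy's outcomes $w_0, w_1, w_2$ closely mirrors that of the $P$-strategy. The only substantive change is that $\beta$'s Case 3 action redefines $G[t_2]=\tau^\frown 0^\omega$ with $\tau\supseteq G[t_2-1]\restriction n_\beta$, rather than enumerating a single use into $G$. So the main new thing to verify in (3) is that this wholesale redefinition of $G$ above $n_\beta$ does not harm the existing $f_\alpha^G$ computations on the $2m_\beta$th and $(2m_\beta+1)$st components.

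For (1), I would argue as in the $P$ case: at $t_1$, $\beta$ is in Case 2, which requires that $f_\gamma^G$ has converged on the $2m_\beta$th and $(2m_\beta+1)$st components for every $S$-strategy $\gamma^\frown\<\infty\>\subseteq\beta$. Applying this to $\gamma=\alpha$ yields $n_\alpha[t_1]>m_\beta$, and the Case 2 instructions explicitly choose $n_\beta$ large enough to exceed both $n_\alpha[t_1]$ and all uses $u_{\gamma,m_\beta}[t_1]$, so $m_\beta<n_\alpha[t_1]<n_\beta[t_1]$ and $n_\beta[t_1]>u_{\alpha,m_\beta}[t_1]$. For (2), once $\beta$ takes the $w_1$ outcome at $t_1$, any strategy $\gamma$ extending $\beta^\frown\<w_1\>$ (and later $\beta^\frown\<w_2\>$) defines fresh parameters larger than the current value of $n_\alpha$ and hence cannot lower $n_\alpha$ below $m_\beta$; strategies strictly between $\alpha^\frown\<\infty\>$ and $\beta$, or to the left of $\beta$, would initialize $\beta$, contradicting the hypothesis. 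Between $t_1$ and $t_2$, $\beta$ itself stays at the $w_1$ outcome and does not alter $G$. At $t_2$, $\alpha$ acts before $\beta$ and can only keep $n_\alpha$ fixed or increment it.

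For (3), I need to check that $\beta$'s redefinition of $G$ at $t_2$ preserves $f_\alpha^G$ on the $2m_\beta$th and $(2m_\beta+1)$st components. Since $\alpha$ is not initialized (because $\alpha\subset\beta$ and $\beta$ is not initialized), and since between $t_1$ and $t_2$ no strategy can destroy the $f_\alpha^G$ computation on those components without either initializing $\beta$ or using witnesses chosen above $n_\beta$, the use $u_{\alpha,m_\beta}$ has not changed since $t_1$. By (1), $u_{\alpha,m_\beta}<n_\beta$. Since $\tau\supseteq G[t_2-1]\restriction n_\beta$ we have $G[t_2]\restriction(u_{\alpha,m_\beta}+1)=G[t_2-1]\restriction(u_{\alpha,m_\beta}+1)$, so the computation $f_\alpha^G$ on the $2m_\beta$th and $(2m_\beta+1)$st components survives at $t_2$. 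When $\alpha$ acts at $t_3$ and resets $n_\alpha$ to the least $m$ with $f_\alpha^G$ undefined on the $2m$th and $(2m+1)$st components, this least $m$ therefore exceeds $m_\beta$.

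The one subtlety I expect to be the main obstacle is the additional homogenization step built into the $R$-strategy's Case 3 (discussed in Section \ref{subsection: interactions}): one must argue that homogenization touches only components with index strictly greater than $m_\beta$. This follows because homogenization targets components that were diagonalized by $P$-strategies $\gamma\supseteq\beta^\frown\<w_1\>$, and such strategies, by the argument in (2), chose their witnesses $n_\gamma$ above $n_\alpha[t_1]>m_\beta$; hence no homogenization can occur on the $2m_\beta$th and $(2m_\beta+1)$st components, and (3) is preserved.
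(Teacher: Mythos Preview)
Your proposal is correct and follows essentially the same approach as the paper: parts (1) and (2) are handled exactly as in Lemma~\ref{lemma: P strategy - lower bounds}, and for (3) you use $u_{\alpha,m_\beta}<n_\beta$ together with $G[t_2-1]\restriction n_\beta\subseteq\tau$ to conclude the relevant $f_\alpha^G$ computation survives, just as the paper does. Two minor remarks: the paper explicitly notes that no requirement can change $G$ below $n_\beta$ between $t_2$ and $t_3$ without initializing $\beta$ (you implicitly rely on this), and your final paragraph on homogenization is extra caution not present in the paper's proof---homogenization only alters loops in $\A$ and $\B$ and does not touch $G$ or the $f_\alpha^G$ oracle computations, so it does not affect whether $f_\alpha^G[t_3-1]$ is defined on the $2m_\beta$th and $(2m_\beta+1)$st components in the sense relevant to resetting $n_\alpha$.
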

\begin{proof}
    ($1$) and ($2$) hold as in Lemma \ref{lemma: P strategy - lower bounds}. For ($3$), when $\beta$ acts at stage $t_2$, it defines $G[t_2]=\tau^\frown0^\omega$ where $G[t_2-1]\restriction n_\beta\subseteq\tau$. Therefore, if $u_{\alpha,m_\beta}[t_1]$ is the use of the map $f_\alpha^G[t_1]$ on the $2m_\beta$th and $(2m_\beta+1)$st components, then 
    \[
    G[t_2]\restriction u_{\alpha,m_\beta}=G[t_2-1]\restriction u_{\alpha,m_\beta}.
    \]
    Moreover, no requirements can change $G$ below $n_\beta$ between stages $t_2$ and $t_3$ without initializing $\beta$. Hence,
    \[
    G[t_3-1]\restriction u_{\alpha,m_\beta}=G[t_2-1]\restriction u_{\alpha,m_\beta}.
    \]
    In particular, when $\alpha$ acts in \textit{Case 2} at stage $t_3$, either $\alpha$ acts in \textit{Subcase 1} and doesn't change $n_\alpha$, or it acts in \textit{Subcase 3} and we retain $n_\alpha[s_3]>m_\beta$ because $G$ has not changed below $u_{\alpha,m_\beta}$. 
\end{proof}

We now combine Lemmas \ref{lemma: P strategy - lower bounds} and \ref{lemma: R strategy - lower bounds} into the following single lemma.

\begin{lemma}\label{lemma: lower bounds for S parameters}
    Let $\alpha$ be an $S$-strategy. An $R$- or $P$-strategy $\beta\supseteq\alpha^\frown\<\infty\>$ cannot cause $n_\alpha$ to drop below $m_\beta$.
\end{lemma}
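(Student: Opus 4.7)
The plan is to reduce the statement to a direct combination of parts (3) of Lemmas \ref{lemma: P strategy - lower bounds} and \ref{lemma: R strategy - lower bounds}. First I would observe that the only points in the construction at which $n_\alpha$ can be lowered are Subcase 2 and Subcase 3 of Case 2 of the $S$-strategy, and in each such subcase the reset is provoked by a single $R$- or $P$-strategy $\beta\supseteq\alpha^\frown\<\infty\>$ taking action in its own Case 3 at some stage $t_2$. In Subcase 2 this action is $\beta$ (a $P$-strategy) challenging $\alpha$ by enumerating $u_{\alpha,n_\beta}$ into $G$; in Subcase 3 it is $\beta$ (an $R$-strategy) redefining $G[t_2]=\tau^\frown 0^\omega$.

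In both subcases $\alpha$ then resets $n_\alpha$ at the next $\alpha$-stage $t_3$ to be the least $m$ such that $f_\alpha^G[t_3-1]$ is not defined on the $2m$th and $(2m+1)$st components of $\A$. So the statement amounts to showing that this least $m$ exceeds $m_\beta$. If $\beta$ is a $P$-strategy, Lemma \ref{lemma: P strategy - lower bounds}(3) gives exactly $n_\alpha[t_3]>m_\beta$; if $\beta$ is an $R$-strategy, Lemma \ref{lemma: R strategy - lower bounds}(3) gives the same. Thus the two cases together yield the conclusion.

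The real content, already absorbed into those lemmas, is that $\beta$'s Case 3 action cannot destroy the $f_\alpha^G$-computation on the $2m_\beta$th and $(2m_\beta+1)$st components: when $\beta$ defined $n_\beta$ at stage $t_1$, it was chosen large enough that $n_\beta[t_1]>u_{\alpha,m_\beta}[t_1]$, and any $G$-change caused by $\beta$ at $t_2$ occurs at or above position $n_\beta$. Hence the computation of $f_\alpha^G$ on the $2m_\beta$th and $(2m_\beta+1)$st components survives $\beta$'s action and remains part of $f_\alpha^G[t_3-1]$, forcing the reset value of $n_\alpha$ to be strictly greater than $m_\beta$.

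I do not expect any genuine obstacle; the lemma is essentially a bookkeeping consolidation of the two preceding lemmas, and the proof should occupy only a few lines, namely a case split on the type of $\beta$ followed by a direct citation of the relevant part (3).
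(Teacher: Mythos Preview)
Your proposal is correct and matches the paper's approach exactly: the paper presents this lemma explicitly as a combination of Lemmas~\ref{lemma: P strategy - lower bounds} and~\ref{lemma: R strategy - lower bounds} and does not even write out a separate proof. Your case split on the type of $\beta$ followed by citation of part~(3) of the relevant lemma is precisely the intended argument.
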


With the help of Lemma \ref{lemma: lower bounds for S parameters}, we now prove the following important fact about the $S$-strategies.

\begin{lemma}\label{lemma: S parameters are unbounded}
    Let $\alpha$ be an $S$-strategy that is never initialized after stage $s$ and takes the $\infty$ outcome infinitely often. For all $m$, there exists a stage $t_m$ such that $n_\alpha[t]\geq m$ for all $t\geq t_m$.
\end{lemma}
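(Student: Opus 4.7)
The plan is to proceed by induction on $m$. The base case $m=0$ is immediate since $n_\alpha$ is always a non-negative integer once defined. For the inductive step, I assume there is a stage $t_m \geq s$ with $n_\alpha[t] \geq m$ for all $t \geq t_m$, and I produce $t_{m+1}$. The key tool is the strict refinement of Lemma~\ref{lemma: lower bounds for S parameters} provided by part~(3) of Lemmas~\ref{lemma: P strategy - lower bounds} and~\ref{lemma: R strategy - lower bounds}: since $\alpha$ is never initialized after stage $s$, the only mechanism that can lower $n_\alpha$ is an $R$- or $P$-strategy $\beta \supseteq \alpha^\frown\<\infty\>$ acting in \textit{Case 3}, and any such action leaves $n_\alpha > m_\beta$ at the next $\alpha$-stage. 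Hence an action by any $\beta$ with $m_\beta \geq m$ is harmless, giving $n_\alpha \geq m+1$.

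The crux of the inductive step is the claim that only finitely many $\beta \supseteq \alpha^\frown\<\infty\>$ can act in \textit{Case 3} after stage $t_m$ with $m_\beta < m$. If $\beta$ is such a strategy and its current $m_\beta$ was most recently defined at stage $\tau_\beta$, then because $m_\beta = \max\{n_\gamma : \gamma \subset \beta\}$ and $\alpha \subsetneq \beta$, we have $n_\alpha[\tau_\beta] \leq m_\beta < m$. The inductive hypothesis then forces $\tau_\beta < t_m$. Only finitely many strategies have parameters defined by stage $t_m$, and each such $\beta$ can act in \textit{Case 3} at most once between initializations. If such a $\beta$ is later initialized and subsequently redefines $m_\beta$, it does so at a stage $\geq t_m$, and by the inductive hypothesis the new $m_\beta \geq n_\alpha \geq m$, so $\beta$ is no longer problematic in its new life.

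With the claim in hand, let $T_1 \geq t_m$ be a stage past all problematic $\beta$-actions, and since $\alpha$ takes $\infty$ infinitely often, choose an $\alpha$-stage $T_2 \geq T_1$ at which $\alpha$ takes $\infty$; the increment there ensures $n_\alpha[T_2] \geq m+1$. For every $t \geq T_2$, $n_\alpha[t] \geq m+1$: $\alpha$'s own $\infty$-actions increment $n_\alpha$, its $w_k$-actions preserve it, and any reset in \textit{Subcase 2} or \textit{Subcase 3} is triggered by a $\beta$-action with $m_\beta \geq m$, yielding $n_\alpha > m_\beta \geq m$. Setting $t_{m+1} = T_2$ completes the induction. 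The main obstacle is the finiteness argument for problematic $\beta$'s; it hinges on the construction's rule that $m_\beta$ is the maximum of the ancestor $n$-parameters, so that the inductive lower bound on $n_\alpha$ automatically gives a matching lower bound on $m_\beta$ for any definition occurring at or after stage $t_m$.
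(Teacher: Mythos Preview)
Your proof is correct and follows essentially the same approach as the paper, relying on Lemma~\ref{lemma: lower bounds for S parameters} (via parts~(3) of Lemmas~\ref{lemma: P strategy - lower bounds} and~\ref{lemma: R strategy - lower bounds}) to bound how far $n_\alpha$ can drop when some $\beta\supseteq\alpha^\frown\langle\infty\rangle$ acts. The one structural difference is that you organize the argument as an explicit induction on $m$: this lets you use the inductive hypothesis $n_\alpha[t]\geq m$ for $t\geq t_m$ to force $\tau_\beta<t_m$ whenever $m_\beta<m$, thereby cleanly establishing that only finitely many problematic $\beta$-actions remain---a finiteness the paper's proof asserts directly by listing $\beta_0,\dots,\beta_k$ without spelling out why the list is finite.
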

\begin{proof}
    Fix $m$. Let $\beta_0,\dots,\beta_k$ be the $R$- and $P$-strategies such that $\alpha^\frown\<\infty\>\subseteq\beta_i$ and $m_{\beta_i}$ is defined at some stage where $m_{\beta_i}<m$. Let $t>s$ be a stage such that no $\beta_i$ acts as in \textit{Case 3} to change $G$ with $m_{\beta_i}<m$ after stage $t$. After stage $t$, no requirement can cause $n_\alpha$ to drop below $m$. Moreover, no other $R$- or $P$-requirement $\delta$ will act until $n_\alpha>m_\delta>m$. Let $t_m>t$ be the least stage such that $n_\alpha[t_m]>m$. 
\end{proof}

\subsubsection{Lemmas on interactions between multiple strategies}

In this section, we prove several lemmas that detail how tension between multiple strategies are resolved in our construction. We first show that for a lower priority $R$-strategy, it won't be able to injure higher priority $S$-strategies at arbitrarily small numbers.

\begin{lemma}\label{lemma: waiting procedure works}
    Let $\alpha$ be an $R_j$-strategy that acts in \textit{Case 3} at stage $s$. For all $P$- and $R$-strategies $\beta\subset\alpha$ and all $S$-strategies $\gamma$ such that $\gamma^\frown\<\infty\>\subseteq\alpha$, if $f_\gamma^G$ is defined on the $2n_\beta$th and $(2n_\beta+1)$st components of $\A$, then
    \[
    G[s]\restriction u_{\gamma,n_\beta}=G[s-1]\restriction u_{\gamma,n_\beta}.
    \]
    Therefore, these maps remain defined.
\end{lemma}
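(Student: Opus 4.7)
The plan is to show the use bound $u_{\gamma, n_\beta} \le n_\alpha$ holds at stage $s$ and then invoke Lemma \ref{lemma: G does not change under n alpha} to conclude that $G[s]$ and $G[s-1]$ agree on positions below $u_{\gamma, n_\beta}$. By Lemma \ref{lemma: G does not change under n alpha}, when $\alpha$ acts in Case 3 at stage $s$ the only changes to $G$ occur at positions $\ge n_\alpha$, so it suffices to prove $u_{\gamma, n_\beta}[s-1] \le n_\alpha$.

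Next, I would let $t$ be the stage at which $\alpha$ defined $n_\alpha$, so $\alpha$ acted in Case 2 at $t$. By the waiting procedure in Case 1, $\alpha$ had already set $m_\alpha = \max\{n_\beta : \beta \subset \alpha\}$ and waited for $f_\gamma^G$ to converge on the $2m_\alpha$th and $(2m_\alpha+1)$st components of $\A$ for every $S$-strategy $\gamma$ with $\gamma^\frown\<\infty\> \subseteq \alpha$. Since $f_\gamma^G$ is built by extending $n_\gamma$ upward from $0$, this convergence implies that $f_\gamma^G$ is defined on the $2n_\beta$th and $(2n_\beta+1)$st components for every $\beta \subset \alpha$. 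As stated in Case 2, $n_\alpha$ was then chosen large enough to exceed $u_{\gamma, n_\beta}[t]$ for every such $\gamma$ and $\beta$.

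The main obstacle is to show that the use has not grown between $t$ and $s$, that is, $u_{\gamma, n_\beta}[s-1] = u_{\gamma, n_\beta}[t]$. For this use to change, $\gamma$ would have to lose and then reestablish its computation on these components, which would require $G$ to change below $u_{\gamma, n_\beta}[t] < n_\alpha$ at some intermediate stage. The key claim is that no such change can occur without initializing $\alpha$: any $R$- or $P$-strategy $\delta$ that could cause such a change is either (i) to the left of $\alpha$, which would directly initialize $\alpha$; (ii) a proper extension of $\alpha$, in which case $\delta$'s relevant parameters (and the uses it could enumerate into $G$) were defined large after $t$, exceeding $n_\alpha$; or (iii) a proper prefix of $\alpha$ that acts in Case 3 and takes the $w_2$ outcome, which by the ordering $w_2 <_\Lambda w_1 <_\Lambda w_0$ would place the new true path strictly to the left of $\alpha$ at $\delta$, hence initializing $\alpha$, contradicting that $\alpha$ acts at $s$.

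Putting these pieces together gives $u_{\gamma, n_\beta}[s-1] = u_{\gamma, n_\beta}[t] < n_\alpha$, and since $G[s]$ and $G[s-1]$ agree below $n_\alpha$, they agree below $u_{\gamma, n_\beta}$, yielding the claimed equality. Because the $\Phi_i^G$-computations defining $f_\gamma^G$ on these components depend only on $G\restriction u_{\gamma, n_\beta}$, the maps $f_\gamma^G$ therefore remain defined on the $2n_\beta$th and $(2n_\beta+1)$st components of $\A$ at stage $s$.
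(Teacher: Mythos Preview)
Your proof is correct and follows the same approach as the paper: establish $n_\alpha > u_{\gamma,n_\beta}$ via the waiting procedure in Cases~1--2, and then note that $\alpha$'s Case~3 action only alters $G$ at positions $\geq n_\alpha$. You are in fact more careful than the paper's own proof, which simply asserts $n_\alpha > u_{\gamma,n_\beta}$ without explicitly verifying (as you do in your case analysis on $\delta$) that the use $u_{\gamma,n_\beta}$ cannot have been redefined between the stage $t$ at which $n_\alpha$ was chosen and stage $s$.
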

\begin{proof}
   When $\alpha$ first acts, it waits for $n_\beta$ to be defined for all $\beta\subset\alpha$ and then defines $m_\alpha=\max\{n_\beta : \beta\subset\alpha\}$ and does not define $n_\alpha$ until it sees that for all $S$-strategies $\gamma^\frown\<\infty\>\subseteq\alpha$, the map $f_\gamma^G$ has been defined on the $2m_\alpha$th and $(2m_\alpha+1)$st components of $\A$. When $\alpha$ is finally able to define $n_\alpha$, it picks $n_\alpha$ large and so $n_\alpha>u_{\gamma,n_\beta}$ for all $S$-strategies $\gamma^\frown\<\infty\>\subseteq\alpha$ and for all $P$- and $R$-strategies $\beta\subset\alpha$. If $\alpha$ finds a $\tau$ such that $\tau\in W_j[s]$ and $G[s-1]\restriction n_\alpha\subseteq\tau$ at a stage $s$, it sets $G[s]=\tau^\frown0^\omega$. This will not change $G$ below $u_{\gamma,n_\beta}$ for all $\gamma$ and $\beta$ as above since $n_\alpha>u_{\gamma,n_\beta}$.
\end{proof}

The following lemma details how the $S$-strategies are able to undo any of their previously defined maps on $\A$-components in the event that a pair of components were used to diagonalize against a computable $\Phi_e$ by some $P_e$-strategy.

\begin{lemma}\label{lemma: uses of loops are smaller than uses of maps}
     Let $\alpha$ be an $S_i$-strategy. If $f_\alpha^G[s]$ is defined on the $2m$th and $(2m+1)$st components of $\A$ at stage $s$, then $l_m[s]<u_{\alpha,m}[s]$ where $l_m[s]$ is the max use of the edges in the image of these components in $\mathcal{M}_i^G$ and $u_{\alpha,m}[s]$ is the use of this computation. In particular, any change in these loops in $\mathcal{M}_i^G$ causes the computation to be destroyed.
\end{lemma}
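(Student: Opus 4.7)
The plan is to trace through the construction to find the unique moment at which the current value of $u_{\alpha,m}$ is set, and invoke the standard ``large use'' convention. The quantity $u_{\alpha,m}$ is assigned only inside the \emph{module for checking for extensions of $f_\alpha^G$} (entered from any of Subcases 1, 2, or 3 of the $S_i$-strategy), and at the moment of assignment, say stage $s_0$, $\alpha$ has already located the oldest lex-least isomorphic copies of the $2m$th and $(2m+1)$st components inside $\mathcal{M}_i^G[s_0]$, together with their $\Phi_i^G[s_0]$-uses. The construction then picks $u_{\alpha,m}$ \emph{large}, which by the standard convention means larger than every number appearing in the construction through stage $s_0$; in particular $u_{\alpha,m}[s_0] > l_m[s_0]$, which handles the stage of assignment.

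For stages $s > s_0$ at which $f_\alpha^G[s]$ remains defined on these components with the same use $u_{\alpha,m}$ inherited from $s_0$, I would argue that the inequality persists via the use principle. Because $f_\alpha^G$ is a $G$-computable functional with use $u_{\alpha,m}$, its being defined at stage $s$ forces $G[s]\restriction(u_{\alpha,m}+1) = G[s_0]\restriction(u_{\alpha,m}+1)$. Since the $\Phi_i^G[s_0]$-use of every edge in the image copies is at most $l_m[s_0] < u_{\alpha,m}$, each such edge computation is preserved at stage $s$; thus the image copies in $\mathcal{M}_i^G[s]$ contain the same edges with the same uses, and $l_m[s] = l_m[s_0] < u_{\alpha,m}$. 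The ``in particular'' consequence is then immediate: any change in one of these edges between consecutive stages forces a change in $G\restriction(l_m+1) \subseteq G\restriction u_{\alpha,m}$, which by the use principle destroys the $\Phi_i^G$-computation underlying $f_\alpha^G$ on these components.

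I do not anticipate a significant obstacle. The only point that deserves attention is that $f_\alpha^G$ on the $2m$th and $(2m+1)$st components may be injured several times in the course of the construction (via a $P$-challenge of a lower priority extension of $\alpha^\frown\<\infty\>$, or via an $R$-redefinition of $G$), and each time $\alpha$ later re-extends its map to these components it does so through the same module and picks a fresh large $u_{\alpha,m}$; the argument above therefore applies to the current instance of the computation, which is what the statement asserts. Apart from recording this, the proof is a direct unpacking of the $S_i$-strategy together with the meaning of ``large.''
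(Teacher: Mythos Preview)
Your proposal is correct and follows essentially the same approach as the paper: establish $l_m<u_{\alpha,m}$ at the moment the use is chosen large, then argue that whenever the computation is present at a later stage with that same use, the relevant initial segment of $G$ agrees with its value at the defining stage, so the image loops and their uses are unchanged. The paper phrases the second step as ``if the computation is later restored with the same use then $l_m$ is restored as well,'' which is exactly what your use-principle argument shows; your additional remark about fresh large uses after re-extension is consistent with this and simply restarts the argument from a new $s_0$.
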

\begin{proof}
    Whenever an $f_\alpha^G[s]$ computation is defined, its use is defined large. Therefore, we have that $l_m[s]<u_{\alpha,m}[s]$ when the computation is first defined. The computation may be destroyed later by a change in $G$ below $u_{\alpha,m}[s]$. However, if it is later restored by $u_{\alpha,m}[t]=u_{\alpha,m}[s]$ for $t>s$, then $l_m[t]=l_m[s]$ and so the loops in $\mathcal{M}_i^G$ are restored as well.
\end{proof}

We now show that enumerating these uses has the intended effect of deleting a map $f_\alpha^G$ defined previously by some $S$-strategy $\alpha$ on some $\A$-components.

\begin{lemma}\label{lemma: uses were never in G before}
    Let $\alpha$ be a $P$-strategy and $\beta$ be an $S$-strategy such that $\beta^\frown\<\infty\>\subseteq\alpha$. Suppose $s_0<s_1<s_2$ are stages such that $\alpha$ defines $n_\alpha$ at $s_0$, $\beta$ defined $f_\beta^G$ on the $2n_\alpha$th and $(2n_\alpha+1)$st components in $\A$ at $s_1$, $\alpha$ acts in \textit{Case 3} at $s_2$ and puts $u_{\beta,n_\alpha}$ into $G[s_2]$, and $\alpha$ is not initialized between $s_0$ and $s_2$. Then,
    \begin{enumerate}
        \item $n_\alpha[s_0]>n_\beta[s_0]$,
        \item for all $s_1\leq t<s_2$, we have that the values of $n_\alpha[t]=n_\alpha[s_0]$ and $u_{\beta,n_\alpha}[t]=u_{\beta,n_\alpha}[s_1]$, so we denote them by $n_\alpha$ and $u_{\beta,n_\alpha}$,
        \item for all $t<s_2$, $u_{\beta,n_\alpha}\not\in G[t]$ and therefore $G[s_2]\restriction(u_{\beta,n_\alpha}+1)\neq G[t]\restriction(u_{\beta,n_\alpha}+1)$ for all $t<s_2$,
        \item unless $\alpha$ is initialized, $u_{\beta,n_\alpha}\in G[s]$ for all $s\geq s_2$, and
        \item if there is an $s\geq s_2$ such that $u_{\beta,n_\alpha}\not\in G[s]$, then the $2n_\alpha$th and $(2n_\alpha+1)$st components of $\A$ and $\B$ are homogenized.
    \end{enumerate}
\end{lemma}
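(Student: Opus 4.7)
The plan is to verify (1)–(5) in order, relying on two recurring ideas: the \emph{large} convention (each freshly declared parameter exceeds every previously mentioned number), and the fact that $G$ is altered only by $R$- and $P$-strategies, exactly when they pass from $w_1$ to $w_2$ in their Case~3. Since $w_2<_\Lambda w_1$, any such action by a strict predecessor $\delta\subsetneq\alpha$ would move the current true path to the left of $\alpha$ and initialize it; together with the hypothesis, this rules out strict predecessors of $\alpha$ as sources of $G$-changes during $(s_0,s_2]$.

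For (1), $\alpha$'s Case~1 refuses to enter Case~2 until $n_\beta$ is defined for every $\beta\subset\alpha$, so when $\alpha$ declares $n_\alpha$ large at $s_0$ we get $n_\alpha[s_0]>n_\beta[s_0]$. For (2), the value $n_\alpha$ changes only on initialization and is therefore constant on $[s_0,s_2)$. For $u_{\beta,n_\alpha}$ to change on $[s_1,s_2)$, the computation $f_\beta^G$ on the $2n_\alpha$-th and $(2n_\alpha+1)$-st components would have to be destroyed by a change of $G\restriction(u_{\beta,n_\alpha}[s_1]+1)$. The strict-predecessor case is already eliminated. An $R$-strategy $\delta\supseteq\alpha^\frown\<w_1\>$ acting in Case~3 is handled by Lemma~\ref{lemma: waiting procedure works}, applied with the lemma's $R$-strategy being $\delta$, its $P/R$-predecessor being our $\alpha$, and its $S$-strategy being our $\beta$; this preserves $G\restriction u_{\beta,n_\alpha}$. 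A $P$-strategy $\delta\supseteq\alpha^\frown\<w_1\>$ first waits in its Case~1 for $f_\beta^G$ to converge on its relevant components, so by the time it declares $n_\delta$ large the use $u_{\beta,n_\alpha}$ has already been announced, forcing $n_\delta>u_{\beta,n_\alpha}$, and any use it later enumerates is larger still.

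For (3), the large choice of $u_{\beta,n_\alpha}$ at $s_1$ gives $u_{\beta,n_\alpha}\notin G[s_1]$, and combining with (2) yields $u_{\beta,n_\alpha}\notin G[t]$ for every $t<s_2$; at $s_2$ strategy $\alpha$ enumerates $u_{\beta,n_\alpha}$ into $G$, so $G[s_2]\restriction(u_{\beta,n_\alpha}+1)\neq G[t]\restriction(u_{\beta,n_\alpha}+1)$ for each such $t$. For (4), we rerun the same dichotomy past $s_2$: any strict predecessor that would change $G$ takes $w_2$ and thus initializes $\alpha$; and any extension of $\alpha^\frown\<w_2\>$ or $\alpha^\frown\<s\>$ (the only outcomes $\alpha$ adopts after $s_2$, with all prior $\alpha^\frown\<w_1\>$-extensions initialized at $s_2$) declares its parameters after $s_2$, so $n_\delta>u_{\beta,n_\alpha}$. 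Since only $R$-strategies remove bits from $G$, and such a $\delta$ redefines $G[t]=\tau^\frown 0^\omega$ with $\tau\supseteq G[t-1]\restriction n_\delta$, the bit at $u_{\beta,n_\alpha}<n_\delta$ is preserved. Finally, (5) is immediate from (4): a disappearance of $u_{\beta,n_\alpha}$ from $G$ after $s_2$ forces $\alpha$ to have been initialized, and since $\alpha$ took $w_2$ at $s_2$ the $2n_\alpha$-th and $(2n_\alpha+1)$-st components of $\A$ and $\B$ already carry their diagonalizing loops, so the homogenization clause of the initialization rule triggers on exactly these components.

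The delicate step is the case analysis in (2) and (4), where every possible tree position of a would-be $G$-mutator (strict predecessor; extensions of $\alpha^\frown\<w_1\>$, $\alpha^\frown\<w_2\>$, or $\alpha^\frown\<s\>$; and strategies to the left or right) must be ruled out via the waiting clauses, the large convention, and Lemma~\ref{lemma: waiting procedure works}.
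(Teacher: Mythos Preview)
Your proof is correct and follows essentially the same approach as the paper's: both rely on the large convention, the fact that only $R$- and $P$-strategies alter $G$ (and only when passing through Case~3), and Lemma~\ref{lemma: waiting procedure works} to control extensions of $\alpha$. Your argument is in fact more explicit than the paper's, which for (2)--(4) simply asserts that any strategy changing $G$ below $u_{\beta,n_\alpha}$ would initialize $\alpha$ and cites Lemma~\ref{lemma: waiting procedure works}; you unpack this into the separate $R$- and $P$-cases and correctly invoke the waiting clause to force $n_\delta>u_{\beta,n_\alpha}$ for extensions of $\alpha^\frown\langle w_1\rangle$.
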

\begin{proof}
    ($1$) follows from the fact that $n_\alpha$ is defined large at stage $s_0$. For ($2$), $n_\alpha$ changes values only when $\alpha$ is initialized, and the only strategies that can change $G$ below $u_{\beta,n_\alpha}$ would initialize $\alpha$. For ($3$), when $\beta$ defines $u_{\beta,n_\alpha}$ at stage $s_1$, the use was chosen large so $u_{\beta,n_\alpha}\not\in G[t]$ for $t\leq s_1$. For $s_1<t<s_2$, the only strategies that could change $G$ below $u_{\beta,n_\alpha}$ would initialize $\alpha$ by Lemma \ref{lemma: waiting procedure works}. Lastly for ($4$), only an $R$-strategy $\gamma$ could remove $u_{\beta,n_\alpha}$ and only if $n_\gamma<u_{\beta,n_\alpha}$. But such an $R$-strategy with $n_\gamma<u_{\beta,n_\alpha}$ would initialize $\alpha$ when it acts by Lemma \ref{lemma: waiting procedure works}. Furthermore, the path $\pi_s<_L\alpha$ causes the $2n_\alpha$th and $(2n_\alpha+1)$st components of $\A$ and $\B$ to be homogenized, proving ($5$).
\end{proof}

\begin{lemma}\label{lemma: 2.3.18}
    Let $\alpha$ be a $P$-strategy and $\beta$ be an $S$-strategy such that $\beta^\frown\<\infty\>\subseteq\alpha$. Suppose $\alpha$ acts at stage $t$ in \textit{Case 3} and puts $u_{\beta,n_\alpha}$ into $G$ and challenges $\beta$. Unless $\beta$ is initialized, at the next $\beta$-stage $s$, $\beta$ defines $n_\beta[s]\leq n_\alpha[s]=n_\alpha[t]$.
\end{lemma}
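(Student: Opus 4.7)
The plan is to apply Lemma \ref{lemma: uses were never in G before} to conclude that the use $u_{\beta,n_\alpha}$ that $\alpha$ enumerated into $G$ at stage $t$ remains in $G[s-1]$, so that the $f_\beta^G$-computation on the $2n_\alpha$th and $(2n_\alpha+1)$st components of $\A$ stays destroyed between stages $t$ and $s$; then $\beta$'s reset rule when it acts in Subcase $2$ at stage $s$ forces $n_\beta[s]$ to be at most $n_\alpha$, and since $\alpha$'s parameters are preserved across the interval we also get $n_\alpha[s] = n_\alpha[t]$.

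First I would note that under the standing hypothesis that $\beta$ is not initialized between $t$ and $s$, and that $\alpha$'s challenge on $\beta$ is still in force at stage $s$ (which is implicit in $\beta$ acting under Subcase $2$), $\alpha$ is also not initialized in this interval: if the current true path had moved to the left of $\alpha$ at some intermediate stage, then $\alpha$'s parameters and its challenge on $\beta$ would have been cleared, and $\beta$ would not be under challenge at stage $s$. Consequently $n_\alpha[s] = n_\alpha[t]$ and $u_{\beta,n_\alpha}$ is unchanged across the interval. Applying Lemma \ref{lemma: uses were never in G before}(4) gives $u_{\beta,n_\alpha} \in G[r]$ for every $r$ with $t \leq r \leq s-1$. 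Letting $s_1 < t$ be the stage at which $\beta$ defined $f_\beta^G$ on the $2n_\alpha$th and $(2n_\alpha+1)$st components, the use $u_{\beta,n_\alpha}$ was picked large at $s_1$ and hence $u_{\beta,n_\alpha} \notin G[s_1]$, so $G[s-1]\restriction(u_{\beta,n_\alpha}+1) \neq G[s_1]\restriction(u_{\beta,n_\alpha}+1)$; this destroys the computation, and since $\beta$ has not acted between $s_1$ and $s$ (as $s$ is the next $\beta$-stage after $t$), the computation has not been redefined. Thus $f_\beta^G[s-1]$ is undefined on the $2n_\alpha$th and $(2n_\alpha+1)$st components of $\A$.

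Finally, at stage $s$, $\beta$ enters Subcase $2$ and sets $n_\beta[s]$ to be the least $m$ such that $f_\beta^G[s-1]$ is not defined on the $2m$th and $(2m+1)$st components of $\A$; since $m = n_\alpha$ witnesses such an $m$, we obtain $n_\beta[s] \leq n_\alpha = n_\alpha[s] = n_\alpha[t]$, as desired. The main obstacle is ruling out the possibility that an intermediate strategy secretly restores the destroyed computation between stages $t$ and $s$, and this is precisely the content of Lemma \ref{lemma: uses were never in G before}(4): any strategy that could remove $u_{\beta,n_\alpha}$ from $G$ would have to initialize $\alpha$ first, which our hypothesis rules out.
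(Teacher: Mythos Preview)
Your proposal is correct and follows the same route as the paper's proof: apply Lemma~\ref{lemma: uses were never in G before} to conclude $u_{\beta,n_\alpha}\in G[s-1]$, deduce that $f_\beta^G[s-1]$ is undefined on the $2n_\alpha$th and $(2n_\alpha+1)$st components, and then read off $n_\beta[s]\leq n_\alpha$ from $\beta$'s reset rule in Subcase~2. The paper's argument is the same, just stated more tersely.

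Two small corrections to your write-up. First, the claim that ``$\beta$ has not acted between $s_1$ and $s$'' is false: $\beta\subset\alpha$, so $\beta$ certainly acts at stage $t$. What you actually need, and what holds, is that there is no $\beta$-stage strictly between $t$ and $s$, so the destroyed computation cannot be redefined before $\beta$ examines $f_\beta^G[s-1]$. Second, the construction does not explicitly say that initializing $\alpha$ clears the challenge it placed on $\beta$, so your justification that $\alpha$ is not initialized via ``the challenge would have been cleared'' is shaky. A cleaner argument goes directly through the tree: for each $t<r<s$, since $r$ is not a $\beta$-stage and $\beta$ is not initialized, we must have $\beta<_L\pi_r$; as $\alpha\supseteq\beta$, this gives $\alpha<_L\pi_r$ as well, so $\alpha$ is not initialized either, and Lemma~\ref{lemma: uses were never in G before}(4) applies.
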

\begin{proof}
    By Lemma \ref{lemma: uses were never in G before} and the fact that $\beta$ is not initialized, we get that $u_{\beta,n_\alpha}\in G[s-1]$. Therefore, $f_\beta^G[s-1]$ is no longer defined on the $2n_\alpha$th and $(2n_\alpha+1)$st components of $\A$ and hence $n_\beta$ is redefined to a value which is at most $n_\alpha$.
\end{proof}

We now state and prove the main verification lemma for our construction.

\begin{lemma}[Main Verification Lemma]\label{lemma: main verification lemma}
    Let $\pi=\liminf_s \pi_s$ be the true path of the construction, where $\pi_s$ denotes the current true path at stage $s$ of the construction. Let $\alpha\subset \pi$.
    \begin{enumerate}
        \item If $\alpha$ is an $R_j$-strategy, then the parameters $m_\alpha$ and $n_\alpha$ are eventually permanently defined and there is an outcome $o$ and an $\alpha$-stage $t_\alpha$ such that for all $\alpha$-stages $s\geq t_\alpha$, $\alpha$ takes outcome $o$ where $o$ ranges over $\{s,w_1\}$. 
        \item Let $\alpha$ be a $P_e$-strategy, then the parameters $m_\alpha$ and $n_\alpha$ are eventually permanently defined and there is an outcome $o$ and an $\alpha$-stage $t_\alpha$ such that for all $\alpha$-stages $s\geq t_\alpha$, $\alpha$ takes outcome $o$ where $o$ ranges over $\{s,w_1\}$.
        \item If $\alpha$ is an $S_i$-strategy, then either $\alpha$ takes outcome $\infty$ infinitely often or there is an outcome $w_n$ and a stage $t_\alpha$ such that for all $\alpha$-stages $s>t_\alpha$, $\alpha$ takes outcome $w_n$. If $\A\cong\mathcal{M}_i^G$, then $\alpha$ takes the $\infty$ outcome infinitely often and defines an embedding $f_\alpha^G:\A\to\mathcal{M}_i^G$ which can be extended in a $G$-computable way to a $G$-computable isomorphism $\hat{f}_\alpha^G$ between $\A$ and $\mathcal{M}_i^G$.
    \end{enumerate}
    In addition, $\alpha$ satisfies its assigned requirement.
\end{lemma}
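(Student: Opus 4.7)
My plan is to prove the lemma by simultaneous induction on $|\alpha|$, using the inductive hypothesis on each prefix $\beta\subset\alpha$ to fix a stage $s^*$ beyond which $\alpha$ is never initialized and every proper prefix has its parameters permanently defined. The base case $\alpha=\lambda$ is vacuous. For the inductive step I would split into the three cases of the statement. In case (1), with $\alpha$ an $R_j$-strategy, after $s^*$ the parameter $m_\alpha$ becomes defined because every $P$- and $R$-prefix has $n_\beta$ defined, and Lemma \ref{lemma: S parameters are unbounded} applied to each $S$-prefix $\gamma$ with $\gamma^\frown\langle\infty\rangle\subseteq\alpha$ gives $n_\gamma>m_\alpha$ cofinitely, so each $f_\gamma^G$ is eventually defined on the $2m_\alpha$th and $(2m_\alpha+1)$st components of $\A$; hence $\alpha$ exits \textit{Case 1} and picks $n_\alpha$ large. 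Lemma \ref{lemma: R parameters stay defined}(1) then implies $G\restriction n_\alpha$ stabilizes, so either no extension $\tau\in W_j$ of $G\restriction n_\alpha$ ever appears and $\alpha$ takes $w_1$ forever (satisfying $R_j$ by avoidance), or such a $\tau$ is found, $\alpha$ takes $w_2$ and then $s$ forever, and Lemma \ref{lemma: R parameters stay defined}(2) pins down the initial segment of $G$ meeting $W_j$. Case (2) is essentially identical up to the definition of the parameters; the diagonalization step is handled by Lemmas \ref{lemma: P diagonalization works as intended} and \ref{lemma: uses were never in G before}, which together show that once $\alpha$ reaches \textit{Case 3} the configurations of $\A$ and $\B$ are permanently made inconsistent with $\Phi_e$ being an isomorphism.

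For case (3), let $\alpha$ be an $S_i$-strategy. If $\alpha$ takes the $\infty$ outcome only finitely often, then $n_\alpha$ and $f_\alpha^G$ eventually stabilize and $\alpha$ settles on some $w_n$. The essential content is the case $\A\cong\mathcal{M}_i^G$: I claim $\alpha$ takes $\infty$ infinitely often and that $f_\alpha^G$ converges to a $G$-computable isomorphism $\hat f_\alpha^G$. Fix $n$. By Lemma \ref{lemma: true copies of components remain}, the true copies in $\mathcal{M}_i^G$ of the $2n$th and $(2n+1)$st components of $\A$ are eventually permanently the oldest and lex-least such copies. Combining Lemmas \ref{lemma: P strategy - lower bounds}, \ref{lemma: R strategy - lower bounds}, and \ref{lemma: lower bounds for S parameters} with Lemma \ref{lemma: S parameters are unbounded}, $n_\alpha$ eventually exceeds $n$, and when $\alpha$ defines $f_\alpha^G$ on these components its use is large enough to sit above the $G$-use of the loops in the true copies (Lemma \ref{lemma: uses of loops are smaller than uses of maps}); once both stabilization events have occurred, $f_\alpha^G$ on these components is never disturbed again and agrees with the true isomorphism. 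Taking $\hat f_\alpha^G=\lim_s f_\alpha^G[s]$ gives a $G$-computable embedding defined on all of $\A$, and Lemma \ref{lemma: graph isomorphisms} upgrades it to an isomorphism, satisfying $S_i$.

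The hard part will be the bookkeeping for case (3). The subtlety is to verify that the finitely many $P$- and $R$-strategies $\beta\supseteq\alpha^\frown\langle\infty\rangle$ with $m_\beta\le n$ exhaust their ability to drop $n_\alpha$ below $n$ at some finite stage, so their collective injury on $n_\alpha$ at level $n$ is finite; Lemma \ref{lemma: lower bounds for S parameters} is the pivotal tool here. One also has to verify that the homogenization step of a successful $R$-strategy does not leave $f_\alpha^G$ permanently wrong: whenever homogenization occurs, the current true path moves strictly left of the offending diagonalizing $P$-strategy (initializing it), and the homogenized components are made symmetric, so $f_\alpha^G$ maps them correctly regardless of which true copy $\mathcal{M}_i^G$ eventually settles on. Once these two bookkeeping points are in hand, the proof is a direct assembly of the auxiliary lemmas above.
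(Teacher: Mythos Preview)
Your approach is essentially the paper's own: induction along the true path, using Lemma~\ref{lemma: S parameters are unbounded} to get $m_\alpha$ and then $n_\alpha$ permanently defined for $R$- and $P$-strategies, Lemma~\ref{lemma: R parameters stay defined} to pin down $G$ in the $R$-case, Lemma~\ref{lemma: P diagonalization works as intended} in the $P$-case, and the combination of Lemmas~\ref{lemma: true copies of components remain}, \ref{lemma: S parameters are unbounded}, and the lower-bound lemmas in the $S$-case.

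There is one genuine gap in case~(3). You set $\hat f_\alpha^G=\lim_s f_\alpha^G[s]$ and assert this is a $G$-computable embedding defined on all of~$\A$. It is not. When a pair of components is homogenized \emph{after} $f_\alpha^G$ has already been defined on them---and the old computation is later restored by an $R$-strategy's change to $G$ rather than rebuilt from scratch---the limit map $f_\alpha^G$ is simply undefined on the newly added $(5n+3)$- and $(5n+4)$-loops, since $\alpha$ never revisits a component whose $f_\alpha^G$-computation persists. Your paragraph on homogenization correctly argues that $f_\alpha^G$ remains \emph{correct} on the root nodes (because the components are now symmetric), but it does not address \emph{totality}. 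The paper handles this explicitly: it acknowledges that $f_\alpha^G$ is only partial and then argues that one can $G$-computably extend it to $\hat f_\alpha^G$ on the homogenizing loops, using Lemma~\ref{lemma: true copies of components remain} to locate the true copies of those loops in $\mathcal M_i^G$ and the $G$-computability of $\mathcal M_i^G$ to carry out the search. Once you insert this extension step, your argument and the paper's coincide.
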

\begin{proof}
    For ($1$), let $\alpha\subset \pi$ be an $R_j$-strategy and let $s_0$ be the least stage such that $\alpha\leq_L \pi_s$ for all $s\geq s_0$ and for all $R$- and $P$-strategies $\beta\subset\alpha$, $n_\beta$ is defined. At stage $s_0$, $\alpha$ defines $m_\alpha$ permanently since $\alpha$ is never initialized again. By Lemma \ref{lemma: S parameters are unbounded}, there is an $s_1>s_0$ such that for all $S$-strategies $\beta$ where $\beta^\frown\<\infty\>\subseteq\alpha$, we have $n_\beta[t]>m_\alpha$ for all $t\geq s_1$. At stage $s_1$ (if not before), $\alpha$ defines $n_\alpha$ permanently and takes outcome $w_1$.
    
    If $\alpha$ remains in the first part of \textit{Case 3} from its description for all $\alpha$-stages $s\geq s_0$, then $R_j$ is satisfied because for all extensions $\tau$ of $\sigma=G[s_0]\restriction n_\alpha$, $\tau\not\in W_j$. Moreover, since $\alpha\subset \pi$, we have that $G[s_0]\restriction n_\alpha=G\restriction n_\alpha$ by Lemma \ref{lemma: R parameters stay defined}. Otherwise, there is a stage $s\geq s_0$ such that $\alpha$ finds an extension $\tau\supseteq G[s_0]\restriction n_\alpha$ where $\tau\in W_j[s]$. At the end of stage $s$, $\alpha$ takes the $w_2$ outcome, and since $\alpha\subset \pi$, $\alpha$ will be able to act again and finally take the $s$ outcome. For the rest of the construction, it will be in \textit{Case 5} of its description after defining $\tau\subseteq G$ where $\tau\in W_j$. Again by Lemma \ref{lemma: R parameters stay defined}, we have that $G[t]\restriction|\tau|=\tau$ for all $t\geq s$, and $R_j$ is satisfied.

    We now prove ($2$). Suppose $\alpha\subset \pi$ is a $P_e$-strategy. Let $s_0$ be the least stage such that $\alpha\leq_L \pi_s$ for all $s\geq s_0$ and for all $R$- or $P$-strategies $\beta\subset\alpha$, $n_\beta$ is defined. Like before, at stage $s_0$, $\alpha$ defines $m_\alpha$ permanently since $\alpha$ is never initialized. By Lemma \ref{lemma: S parameters are unbounded}, there exists a stage $s_1>s_0$ such that for all $S$-strategies $\beta$ with $\beta^\frown\<\infty\>\subseteq\alpha$, we have that $n_\beta[t]>m_\alpha$ for all $t\geq s_1$. Then at stage $s_1$ (if not before), $\alpha$ defines $n_\alpha$ permanently and takes outcome $w_1$.
    
    If $\alpha$ remains in the first part of \textit{Case 3} from its description for all $\alpha$-stages $s\geq s_0$, then $P_e$ is trivially satisfied since $\Phi_e$ is not total or maps $\A$ incorrectly into $\B$, and so it cannot be an isomorphism between $\A$ and $\B$. In this case, $\alpha$ takes outcome $w_1$ cofinitely often.

    Otherwise, there is an $\alpha$-stage $s_1>s_0$ where $\Phi_e[s_1]$ maps the $2n$th and $(2n+1)$st components of $\A$ isomorphically into $\B$. Then, $\alpha$ carries out all actions described in the second part of \textit{Case 3}. Let $s_2>s_1$ be the next $\alpha$-stage, and now $\alpha$ is in \textit{Case 4} of its description and can now take the $s$ outcome. Moreover, since $\alpha\subset \pi$, $\alpha$ will never get initialized again and so the $2n$th and $(2n+1)$st components of $\A$ and $\B$ are never homogenized. By Lemma \ref{lemma: P diagonalization works as intended} we have that $\Phi_e(a_{2n})=b_{2n}$, but $a_{2n}$ is connected to a cycle of length $5n+3$ whereas $b_{2n}$ is connected to a cycle of length $5n+4$. Similarly, $\Phi_e(a_{2n+1})=b_{2n+1}$, but $a_{2n+1}$ is connected to a cycle of length $5n+4$ whereas $b_{2n+1}$ is connected to a cycle of length $5n+3$. Hence, $\Phi_e$ cannot be an isomorphism between $\A$ and $\B$, and so $P_e$ is satisfied.

    For ($3$), let $\alpha\subset \pi$ be an $S_i$-strategy and let $s_0$ be the least stage such that for all stages $s\geq s_0$, $\alpha\leq_L \pi_s$. Under all of the subcases of \textit{Case 3}, $\alpha$ enacts the module to search for extensions of $f_\alpha^G$ on the $2n_\alpha$th and $(2n_\alpha+1)$st components of $\mathcal{A}$ for its currently defined $n_\alpha$ parameter. If an extension cannot be found for these components, then we have that $\alpha$ takes the $w_{n_\alpha}$ outcome at cofinitely many $\alpha$-stages after $s_0$ and we are done since $\A\not\cong\mathcal{M}_i^G$. 
    
    We can then assume that $\A\cong\mathcal{M}_i^G$ and that $\alpha$ takes the $\infty$ outcome infinitely often. $\alpha$ can then eventually find a correct extension on these components by Lemma \ref{lemma: true copies of components remain}. Additionally, we have by Lemma \ref{lemma: S parameters are unbounded} that $n_\alpha[s]\to\infty$ as $s\to\infty$ where $n_\alpha[s]$ denotes the value of $n_\alpha$ at the end of stage $s$. We also have that by Lemmas \ref{lemma: uses of loops are smaller than uses of maps} and \ref{lemma: 2.3.18}, these embeddings will remain or be able to recover if they are challenged by lower priority strategies.

    Although $f_\alpha^G$ is not defined on the homogenizing loops added at the end of each stage, we can $G$-computably extend $f_\alpha^G$ to a map $\hat{f}_\alpha^G$ defined on all of $\mathcal{A}$ in the following way. Since $\mathcal{A}\cong\mathcal{M}_i^G$, then these loops added at the end of each stage will eventually have true copies in the images under $f_\alpha^G$ on the affected components, and they will become the oldest and lex-least such copies by Lemma \ref{lemma: true copies of components remain}. We can find copies of these new loops by using $G$ since $\mathcal{M}_i^G$ is $G$-computable, and once we know when these loops appear, we can extend $f_\alpha^G$ appropriately on these new components to obtain $\hat{f}_\alpha^G$. Our new map $\hat{f}_\alpha^G$ is still $G$-computable, and by Lemma \ref{lemma: graph isomorphisms}, $\hat{f}_\alpha^G$ is our $G$-computable isomorphism between $\mathcal{A}$ and $\mathcal{M}_i^G$.
\end{proof}

\section{Other classes of structures}\label{section: other classes of structures}
For this section, we restate the main result from \cite{villano2024computable}.

\begin{theorem}\label{thm: poset result}
    Let $P=(P,\leq)$ be a computable partially ordered set and let $P=P_0\sqcup P_1$ be a computable partition. Then, there exists a computable computably categorical directed graph $\G$ and an embedding $h$ of $P$ into the c.e.\ degrees where $\G$ is computably categorical relative to each degree in $h(P_0)$ and is not computably categorical relative to each degree in $h(P_1)$.
\end{theorem}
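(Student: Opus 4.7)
The plan is a priority construction on a tree of strategies that simultaneously builds a computable directed graph $\G$, a uniformly c.e.\ family $\{A_p : p \in P\}$ realizing the embedding $h(p) = \deg(A_p)$, and for each $p \in P_1$ and each $e$ an $A_p$-computable copy $\B_{p,e}$ witnessing non-categoricity relative to $h(p)$. The components of $\G$ would come in pairs carrying loop-signatures of the same template as in Section \ref{section: informal not c.c.}, with disjoint infinite families of component-pair indices reserved for each element of $P_1$ (for diagonalization) and a background family carrying the computable categoricity of $\G$ itself. Because $\G$ must itself be computably categorical, any asymmetries introduced on the $\G$-side during diagonalization have to be erased at the end; the actual failure of $A_p$-categoricity is realized on the $\B_{p,e}$-side, which is $A_p$-computable but not computable.

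For the embedding I would use the standard technique for embedding a computable partial order into the c.e.\ degrees: code $A_p \leq_T A_q$ whenever $p \leq q$ by enumerating $A_p$ into a designated coding column of $A_q$, and satisfy non-reducibility requirements $N^{p,q}_e : \Phi_e^{A_q} \neq A_p$ with Sacks-style preservation whenever $p \not\leq q$. For each $p \in P_0$ and each $i$ I would include an $S^p_i$-requirement that, using $A_p$ in place of $G$, enacts the ``oldest and lex-least copy'' procedure of Section \ref{section: informal categoricity strategy} to define an $A_p$-computable isomorphism from $\G$ to each $A_p$-computable copy $\mathcal{M}^{A_p}_i$. For each $p \in P_1$ and each $e$, the $P^p_e$-requirement asserts that $\Phi_e^{A_p}$ is not an isomorphism $\G \to \B_{p,e}$; the diagonalization is effected by $A_p$-permitted asymmetries inserted into $\B_{p,e}$ rather than by permanent asymmetries in $\G$. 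A parallel scheme of $C_j$-requirements ensures $\G$ is computably categorical against every computable copy, using a homogenization step analogous to Section \ref{subsection: interactions} to wipe any transient asymmetries in $\G$ caused by abandoned $P^p_e$-strategies.

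The hard part will be coordinating three conflicting pulls on the tree of strategies: the $N^{p,q}_e$-preservation restraints want $A_q$ stable below certain uses, the $P^q_e$-diagonalizations demand $A_q$-changes to permit new $\B_{q,e}$-edges, and the $S^p_i$-strategies for $p$ with $A_q \leq_T A_p$ depend on $A_p$-computations whose use involves parts of $A_q$ through the coding. The resolution, as in the current paper, is to pick every new witness and use larger than all active restraints and to arrange the tree so that each node's guess about lower-priority $P^q_e$-successes is reflected in which $\B_{q,e}$-edges are built and which are withheld. The verification would then mirror Lemmas \ref{lemma: P strategy - lower bounds} through \ref{lemma: 2.3.18}, adapted to the multi-oracle setting where $A_p$ and $A_q$ for $p,q$ incomparable in $P$ must behave independently; showing that on the true path every surviving $P^p_e$ for $p \in P_1$ produces a permanent $A_p$-detectable asymmetry in $\B_{p,e}$, while $\G$'s global homogenization keeps it computably categorical, is where the real work lies.
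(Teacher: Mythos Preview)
This theorem is not proved in the paper you were given: it appears (as Theorem~1.2 and again as Theorem~\ref{thm: poset result}) only as a restatement of the main result of \cite{villano2024computable}, and Sections~\ref{section: informal strategies for 1-generic}--\ref{section: main proof for generic result} adapt that earlier machinery to the $1$-generic setting rather than reproving the poset theorem. So there is no in-paper proof to compare your sketch against directly.

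That said, your outline has a genuine gap. You propose, for each $p\in P_1$ and each $e$, a separate $A_p$-computable copy $\B_{p,e}$, with $P^p_e$ asserting that $\Phi_e^{A_p}$ is not an isomorphism $\G\to\B_{p,e}$. Meeting all of these does \emph{not} show that $\G$ fails to be computably categorical relative to $A_p$: for that you need a \emph{single} $A_p$-computable copy $\B_p$ such that no $\Phi_e^{A_p}$ is an isomorphism $\G\to\B_p$. Diagonalizing $\Phi_e^{A_p}$ only against its own private $\B_{p,e}$ says nothing about whether, say, $\Phi_0^{A_p}$ is an isomorphism onto $\B_{p,1}$. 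The requirement scheme must read $P^p_e:\Phi_e^{A_p}:\G\to\B_p$ is not an isomorphism, with one common $\B_p$ per $p\in P_1$, exactly as the $P_e$-requirements of Section~\ref{section: informal not c.c.} all act on a single copy $\B$.

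Your first-paragraph claim that ``any asymmetries introduced on the $\G$-side during diagonalization have to be erased at the end'' is also suspect, and in tension with your later, more modest claim that only \emph{transient} asymmetries from \emph{abandoned} $P^p_e$-strategies are homogenized. A rigid $\G$ (every component with a distinct loop signature, as in state~2 of Section~\ref{section: informal not c.c.}) is entirely compatible with computable categoricity; rigidity does not obstruct it. More to the point, the homogenization of Section~\ref{subsection: interactions} is forced by a phenomenon peculiar to building a $1$-generic: an $R$-strategy can \emph{remove} numbers from the oracle and thereby resurrect a stale $f_\alpha$-computation on components whose shape has since changed. With c.e.\ oracles $A_p$ this cannot happen---elements only enter $A_p$, so a destroyed $f_\alpha^{A_p}$-computation stays destroyed---and the rationale you give for wiping $\G$-asymmetries does not carry over. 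The genuine issue for ensuring $\G$ is computably categorical is coordinating the \emph{timing}: any loops a $P^p_e$-strategy adds to $\G$ must appear before the relevant categoricity strategies commit on that pair, which is arranged by choosing $n_\alpha$ large, not by erasing loops after the fact.
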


We now show that the structure which witnesses the above behavior need not be a directed graph.

\begin{theorem}\label{thm: in other classes of structures}
    Let $P=(P,\leq)$ be a computable partially ordered set and let $P=P_0\sqcup P_1$ be a computable partition. For the following classes of structures, there exists a computable example in each class which satisfies the conclusion of Theorem \ref{thm: poset result}: symmetric, irreflexive graphs; partial orderings; lattices; rings with zero-divisors; integral domains of arbitrary characteristic; commutative semigroups; and $2$-step nilpotent groups. 
\end{theorem}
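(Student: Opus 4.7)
The plan is to derive Theorem \ref{thm: in other classes of structures} from Theorem \ref{thm: poset result} via the standard Hirschfeldt--Khoussainov--Shore--Slinko (HKSS) coding theorems. For each class $\mathcal{K}$ on the list, one has a uniformly effective interpretation $F_\mathcal{K}$ sending a (computable) directed graph $\mathcal{H}$ to a (computable) structure $F_\mathcal{K}(\mathcal{H}) \in \mathcal{K}$, together with computable translations in both directions, such that the categories of computable copies of $\mathcal{H}$ and of $F_\mathcal{K}(\mathcal{H})$ are equivalent via computable functors: a computable isomorphism between copies of $\mathcal{H}$ corresponds, uniformly computably, to a computable isomorphism between the corresponding copies of $F_\mathcal{K}(\mathcal{H})$, and conversely.

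The point I would then emphasize is that the construction of $F_\mathcal{K}$ uses only computable operations on the input graph, so the functor relativizes to any oracle. That is, for any set $X$, the $X$-computable copies and $X$-computable isomorphisms correspond under $F_\mathcal{K}$ in a uniformly $X$-computable way. Consequently, for any Turing degree $\mathbf{d}$, a computable directed graph $\mathcal{H}$ is $\mathbf{d}$-computably categorical if and only if $F_\mathcal{K}(\mathcal{H})$ is $\mathbf{d}$-computably categorical.

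Given this, the proof is immediate: let $\G$ and $h : P \to \{\text{c.e.\ degrees}\}$ be supplied by Theorem \ref{thm: poset result}. For each class $\mathcal{K}$ on the list, set $\mathcal{S}_\mathcal{K} = F_\mathcal{K}(\G)$. Then $\mathcal{S}_\mathcal{K}$ is a computable structure in $\mathcal{K}$ which is computably categorical, and for every $p \in P$ the structure $\mathcal{S}_\mathcal{K}$ is computably categorical relative to $h(p)$ if and only if $\G$ is, i.e.\ if and only if $p \in P_0$. This produces the required example in each of the listed classes, with the same embedding $h$ witnessing the partition.

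The main obstacle will be verifying, for each of the seven classes, that the appropriate coding functor genuinely has the uniform relativization property stated above. For the more combinatorial classes (symmetric irreflexive graphs, partial orderings, lattices, commutative semigroups) the coding is explicit and reversible by inspection, so the relativization is routine. For rings with zero-divisors, integral domains of arbitrary characteristic, and $2$-step nilpotent groups the underlying HKSS constructions encode the directed graph via a computable scheme that uses only the input graph's edge relation, and the backward translation recovering an isomorphism of graphs from an isomorphism in $\mathcal{K}$ is equally uniform; thus relativization to an arbitrary oracle $\mathbf{d}$ follows once one has written down the unrelativized equivalence. The only real work is recording those uniform translations case by case, which amounts to reading off the HKSS constructions rather than producing any new combinatorics.
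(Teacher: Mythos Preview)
Your proposal is correct and follows essentially the same route as the paper: invoke the HKSS codings, observe that the relevant properties and translation lemmas relativize uniformly to any oracle, deduce (this is the paper's Lemma~\ref{lemma: coding preserves categoricity relative to a degree}) that for every degree $\mathbf{d}$ the graph $\G$ is computably categorical relative to $\mathbf{d}$ iff the coded structure is, and then read off the conclusion from Theorem~\ref{thm: poset result}. One terminological caution: in the middle paragraph you write ``$\mathbf{d}$-computably categorical'' where you mean ``computably categorical relative to $\mathbf{d}$''; the paper carefully distinguishes these two notions, and it is the latter (quantifying over $\mathbf{d}$-computable copies, not just computable ones) that your relativized functor argument actually delivers and that the theorem requires.
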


We will use the codings given by the authors in \cite{HKSS02} in order to code the directed graph $\G$ (and presentations of it) in the statement of Theorem \ref{thm: poset result} into a structure in one of the above classes of structures. 

\subsection{Codings}
Suppose an abstract graph $\G$ and a structure $\A$, from one of the listed classes in the statement of Theorem \ref{thm: in other classes of structures}, have computable presentations. Let $G$ and $A$ be particular copies of $\G$ and $\A$, respectively. We first state the following definitions before defining the coding from \cite{HKSS02}.

\begin{definition}
    A relation $U$ on a structure $\A$ is \textit{invariant} if for every automorphism $f:\A\cong\A$, we have that $f(U)=U$.
\end{definition}

Here, an $n$-ary relation $U$ on a structure $\A$ is some subset of $|\A|^n$ where $|\A|$ denotes the underlying domain of $\A$.

\begin{definition}
    A relation $U$ on the domain of a structure $\A$ is \textit{intrinsically computable} if for any computable $\mathcal{B}$ and computable isomorphism $f:\mathcal{A}\to\mathcal{B}$, the image $f(U)$ is computable.
\end{definition}

\begin{definition}
    Let $\mathbf{d}$ be a degree. A $\mathbf{d}$-\textit{computable defining family} for a structure $\A$ is a $\mathbf{d}$-computable set of existential formulas $\phi_0(\vec{a},x),\phi_1(\vec{a},x),\dots$ such that $\vec{a}$ is a tuple of elements of $|\A|$, each $x\in|\A|$ satisfies some $\phi_i$, and no two elements of $|\A|$ satisfy the same $\phi_i$.
\end{definition}

The main idea of the coding methods given in \cite{HKSS02} is that there are intrinsically computable, invariant relations $D(x)$ and $R(x,y)$ on the domain of $\A$ such that taking the elements in $|\A|$ satisfying $D(x)$ and adding the relation on them defined by $R(x,y)$ gives a copy of the graph $\G$. This gives us a map from copies of the structure $\A$ to copies of $\G$, which we will write as $A\mapsto G_A$. In addition, there is a uniform computable functional taking copies of $\G$ to copies of $\A$, which we will write as $G\mapsto A_G$. Note that these maps can be applied repeatedly. For example, we can apply a map to go from $A$ to $G_A$ and then to $A_{G_A}$, a copy of the structure $\A$. Note that $A_{G_A}$ and $A$ are isomorphic as both are copies of the structure $\A$, but they are \textit{not} the same presentation. 

The coding methods in \cite{HKSS02} satisfy the following list of properties:
\begin{enumerate}
    \item[(P0)] For every presentation $G$ of $\G$, the structure $A_G$ is $\deg(G)$-computable.
    \item[(P1)] For every presentation $G$ of $\G$, there is a $\deg(G)$-computable map $g_G:|A_G|\to G$ such that $R^{A_G}(x,y)\iff E^G(g_G(x),g_G(y))$ for all $x,y\in |A_G|$.
    \item[(P2)] If $f:|\A|\to |\A|$ is $1$-to-$1$ and onto and $R(x,y)\iff R(f(x),f(y))$ for all $x,y\in |\A|$, then $f$ can be extended to an automorphism of $\A$.
    \item[(P3)] For every presentation $G$ of $\G$, there is a $\deg(G)$-computable set of existential formulas $\phi_0(\vec{a},\vec{b}_0,x), \phi_1(\vec{a},\vec{b}_1,x),\dots$ such that $\vec{a}$ is a tuple of elements from the universe of $A_G$, each $\vec{b}_i$ is a tuple of elements of $|A_G|$, each $x$ in the universe of $A_G$ satisfies some $\phi_i$, and no two elements of the universe of $A_G$ satisfy the same $\phi_i$.
\end{enumerate}

Each of these properties are key in proving Lemmas $2.6$-$2.9$ in \cite{HKSS02}, whose relativized versions below are needed to prove Theorem \ref{thm: in other classes of structures}. Here, we let $\G$ be the directed graph that satisfies the conclusion of Theorem \ref{thm: poset result}, but the lemmas hold in general for any directed graph and for all degrees $\mathbf{d}$.

\begin{lemma}\label{lemma: relativized lemma 1}
    For every $\mathbf{d}$-computable presentation $G$ of $\G$, there is a $\mathbf{d}$-computable isomorphism from $G_{A_G}$ onto $G$.
\end{lemma}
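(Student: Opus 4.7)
The plan is to show that the natural candidate, $f := g_G \upharpoonright \{x \in |A_G| : D^{A_G}(x)\}$ with $g_G$ coming from property (P1), is the desired $\mathbf{d}$-computable isomorphism. By the definition of the coding, the universe of $G_{A_G}$ is precisely $\{x \in |A_G| : D^{A_G}(x)\}$, and its edge relation is the restriction of $R^{A_G}$ to that set. Property (P1) then gives edge preservation and reflection for free: $R^{A_G}(x,y) \iff E^G(g_G(x), g_G(y))$ for all $x,y \in |A_G|$, and in particular for the pairs from the universe of $G_{A_G}$.

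For $\mathbf{d}$-computability, I would combine three ingredients. By (P0), $A_G$ is $\mathbf{d}$-computable. Since $D$ is an intrinsically computable invariant relation on $\mathcal{A}$, the relativized version of intrinsic computability ensures that $D^{A_G}$ is a $\mathbf{d}$-decidable subset of $|A_G|$. Together with the $\mathbf{d}$-computability of $g_G$ itself (from (P1)), this makes the restriction $f$ a $\mathbf{d}$-computable map.

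The substantive remaining step is verifying that $f$ is a bijection from $|G_{A_G}|$ onto $|G|$. This is a purely structural argument that does not touch the computability layer and essentially reproduces the unrelativized Lemma~2.6 of \cite{HKSS02}. Surjectivity reflects the way $A_G$ is built from $G$: each vertex $v \in G$ has a canonical $D$-representative placed inside $A_G$ which $g_G$ sends to $v$. For injectivity, if distinct $D$-elements $x,y$ satisfied $g_G(x)=g_G(y)$, then by (P1) the transposition swapping $x$ and $y$ on $|A_G|$ preserves $R^{A_G}$ (because all $R$-questions involving $x$ or $y$ translate via $g_G$ into identical $E^G$-questions), so by (P2) this transposition extends to an automorphism of $A_G$ actually sending $x$ to $y$; the uniqueness clause of the defining family supplied by (P3) then produces a contradiction, since no two elements of $|A_G|$ can realize the same formula from the family. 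I expect the main obstacle to be not mathematical but bookkeeping: one has to confirm that the intrinsic computability of $D$ and each of (P0)--(P3) relativize verbatim from the computable to the $\mathbf{d}$-computable setting, after which the HKSS argument transfers with no change in structure.
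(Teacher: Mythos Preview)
The paper does not actually prove this lemma; it simply records it as the relativization of Lemma~2.6 in \cite{HKSS02} and remarks that properties (P0)--(P3) are what make that lemma go through. Your proposal therefore goes further than the paper by sketching an argument, and the overall shape --- take $g_G$ from (P1), restrict to the $D$-elements, read off edge preservation from (P1), and check $\mathbf{d}$-computability via (P0) and the relativized intrinsic computability of $D$ --- is exactly the intended approach and matches the HKSS proof.

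That said, your injectivity argument via (P2) and (P3) has a gap. Suppose $x\neq y$ are $D$-elements with $g_G(x)=g_G(y)$, so the transposition $(x\ y)$ preserves $R$ and, by (P2), is an automorphism of $A_G$. You then invoke (P3): $x$ satisfies some $\phi_i(\vec a,\vec b_i,\cdot)$, and you want $y$ to satisfy the \emph{same} formula with the \emph{same} parameters, contradicting the uniqueness clause. But applying the automorphism only yields $A_G\models\phi_i\bigl((x\ y)\vec a,(x\ y)\vec b_i,y\bigr)$, and nothing in (P3) prevents $x$ or $y$ from appearing among $\vec a$ or $\vec b_i$; in that case the parameters are genuinely different and no contradiction arises. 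In the actual HKSS argument injectivity (and surjectivity) of $g_G$ on $D$-elements is immediate from the explicit construction of $A_G$ from $G$: each vertex of $G$ is assigned a single canonical $D$-representative and $g_G$ just reads off that assignment. The fix is therefore to drop the (P2)/(P3) detour and appeal directly to the construction, which relativizes verbatim --- precisely what the paper is implicitly invoking when it cites \cite{HKSS02}.
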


\begin{lemma}\label{lemma: relativized lemma 2}
    For every $\mathbf{d}$-computable presentation $A$ of $\A$, there is a $\mathbf{d}$-computable isomorphism from $A_{G_A}$ onto $A$.
\end{lemma}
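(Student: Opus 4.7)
The plan is to emulate the proof of the non-relativized version of this statement (Lemma 2.9 of \cite{HKSS02}), replacing ``computable'' by ``$\mathbf{d}$-computable'' throughout, and to verify that properties (P0)--(P3) of the coding relativize uniformly so that each step survives.

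First, because $A$ is $\mathbf{d}$-computable and the relations $D(x)$ and $R(x,y)$ are intrinsically computable and invariant on $\A$, the induced directed graph $G_A$ on the $D$-elements of $A$ is $\mathbf{d}$-computable. Property (P0) then makes $A_{G_A}$ $\mathbf{d}$-computable. Applying (P1) to the presentation $G_A$ produces a $\mathbf{d}$-computable map $g \colon |A_{G_A}| \to G_A$ satisfying $R^{A_{G_A}}(x,y) \iff E^{G_A}(g(x),g(y))$; since $G_A$ is by construction the restriction of $A$ to its $D$-elements, and the defining data of $A_{G_A}$ forces $g$ to act as the canonical identification on $D$-elements, the restriction $f_0 := g \restriction D^{A_{G_A}}$ is a $\mathbf{d}$-computable bijection from $D^{A_{G_A}}$ onto $D^A$ that preserves $R$.

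Next, invoke (P2): since $f_0$ is a bijection on the $D$-parts preserving $R$, it extends (abstractly) to an isomorphism $\hat{f} \colon A_{G_A} \to A$ of $\A$-structures. To make this extension $\mathbf{d}$-computable, use the defining families provided by (P3): for $A_{G_A}$, a $\mathbf{d}$-computable list $\{\phi_i(\vec{a},\vec{b}_i,x)\}_i$ of existential formulas such that every element satisfies exactly one $\phi_i$; and analogously for $A$. Organize the construction of $\hat{f}$ as a $\mathbf{d}$-computable back-and-forth: at each step we have a finite partial map extending $f_0$; to add the next unmatched element $x$ on either side, $\mathbf{d}$-computably search for the defining formula and parameter tuple that uniquely picks out $x$, then declare its image (or preimage) to be the unique element of the other structure satisfying the transported formula under the partial map so far. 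Existential satisfaction is $\mathbf{d}$-c.e., and the uniqueness clause in (P3) guarantees that the first match found is the correct one; existence of the matching element on the other side is guaranteed by the abstract isomorphism produced by (P2).

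The main obstacle is that the parameters $\vec{a},\vec{b}_i$ appearing in (P3) need not lie in the $D$-part, so a naive one-sided recursion ``define $\hat{f}(x)$ in terms of $\hat{f}$ on the parameters of its defining formula'' is not well-founded. This is exactly why the extension must be organized as a two-sided back-and-forth: running (P3) on both $A_{G_A}$ and $A$ simultaneously lets the forward direction request images of parameters that may only get supplied later by the backward direction, and vice versa, with every element eventually enumerated on both sides. Checking that all the uniformities present in the coding of \cite{HKSS02} hold uniformly in the oracle $\mathbf{d}$ is then routine.
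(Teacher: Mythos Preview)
The paper does not actually prove this lemma; it simply records it as the relativization of the corresponding lemma in \cite{HKSS02} and asserts that properties (P0)--(P3) suffice. Your proposal is an attempt to spell that relativization out, and the overall shape---use intrinsic computability and (P0) to see that $G_A$ and $A_{G_A}$ are $\mathbf{d}$-computable, use (P1) to get a $\mathbf{d}$-computable bijection $f_0$ between the $D$-parts, invoke (P2) for an abstract extension, and use (P3) to make the extension $\mathbf{d}$-effective---is exactly the outline of the HKSS argument.

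The gap is in your final paragraph. The back-and-forth you describe does not dissolve the circularity you correctly identify: if the defining formula for $x$ has a parameter $b$ not yet in the current partial map, alternating directions does nothing for you---you still cannot transport $\phi_i$ until $b$ has been handled, and (P3) as you have read it gives no reason why the parameter-dependency relation among elements should be well-founded, let alone effectively so. A related problem is your appeal to a defining family ``analogously for $A$'': (P3) is only stated for structures of the form $A_G$, and you have no a priori defining family for the arbitrary presentation $A$.

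The way \cite{HKSS02} actually handles this is simpler and bypasses the issue. In each of the concrete codings there, the varying parameters $\vec b_i$ of the defining family are taken from $D^{A_G}$ (a detail the paper's summary of (P3) elides), while $\vec a$ is a single fixed finite tuple. Since $f_0$ already sends $D^{A_{G_A}}$ bijectively onto $D^A$, and $\hat f(\vec a)$ can be hardcoded non-uniformly, one computes $\hat f(x)$ directly as the unique $y\in A$ with $A\models\phi_i(\hat f(\vec a),f_0(\vec b_i),y)$ via a single $\mathbf{d}$-c.e.\ search guaranteed to halt by (P2). No back-and-forth is required, and only the defining family on the $A_{G_A}$ side is used.
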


\begin{lemma}\label{lemma: relativized lemma 3}
    If $G_0$ and $G_1$ are $\mathbf{d}$-computable presentations of $\G$ and $h:G_0\to G_1$ is a $\mathbf{d}$-computable isomorphism, then there is a $\mathbf{d}$-computable isomorphism $\hat{h}:A_{G_0}\to A_{G_1}$.
\end{lemma}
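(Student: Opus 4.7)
The plan is to relativize to $\mathbf{d}$ the construction of $\hat h$ from the unrelativized version of this lemma in \cite{HKSS02}. Since properties (P0)--(P3) are each witnessed by a $\mathbf{d}$-computable object whenever the underlying graph presentation $G$ is $\mathbf{d}$-computable, no new ideas are required beyond careful bookkeeping to verify that each step in the construction of $\hat h$, when supplied with $\mathbf{d}$-computable inputs, produces $\mathbf{d}$-computable outputs.

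I would proceed in two stages. First, using (P0) and (P1), both $A_{G_0}$ and $A_{G_1}$ are $\mathbf{d}$-computable, and the maps $g_{G_0}, g_{G_1}$ are $\mathbf{d}$-computable and compatible with $R$ and $E$. Since the relation $D$ is intrinsically computable and invariant, its image in any $\mathbf{d}$-computable copy of $\A$ is $\mathbf{d}$-computable, and the map $g_G$ restricts to a bijection from the $D$-part of $A_G$ onto $G$. On the $D$-part of $A_{G_0}$, I would set $\hat h(x) = g_{G_1}^{-1}(h(g_{G_0}(x)))$; since the $D$-part of $A_{G_1}$ is a $\mathbf{d}$-computable subset of $|A_{G_1}|$, the preimage under $g_{G_1}$ can be found by a $\mathbf{d}$-computable search. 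This yields a $\mathbf{d}$-computable, $R$-preserving bijection between the backbones of $A_{G_0}$ and $A_{G_1}$.

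Second, to extend $\hat h$ to all of $|A_{G_0}|$, I would invoke (P3): for each $x \in |A_{G_0}|$, $\mathbf{d}$-computably locate a formula $\phi_i$ in the defining family and the parameter tuple $(\vec a, \vec b_i)$ with $A_{G_0} \models \phi_i(\vec a, \vec b_i, x)$; apply $\hat h$ to those parameters (with the recursion reducing to the backbone); and $\mathbf{d}$-computably search for the unique $y \in |A_{G_1}|$ with $A_{G_1} \models \phi_i(\hat h(\vec a), \hat h(\vec b_i), y)$. Setting $\hat h(x) = y$ yields a $\mathbf{d}$-computable map, which is a bijection by the uniqueness clause of (P3) applied symmetrically on both sides, and which preserves $R$ on the backbones by construction; by (P2), the resulting map is an isomorphism of $A_{G_0}$ onto $A_{G_1}$.

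The main obstacle will be arranging the parameter-recursion in the extension step so that it terminates $\mathbf{d}$-computably. In the concrete codings of \cite{HKSS02}, the parameters in the defining family lie either in the $D$-part or depend on a fixed finite tuple of distinguished generators of $A_G$, so the recursion bottoms out uniformly in one or two steps. This structural feature of the codings is preserved under relativization, so the approach works uniformly across all the classes of structures listed in Theorem \ref{thm: in other classes of structures}.
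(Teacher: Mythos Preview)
Your proposal is correct and is essentially the same approach the paper takes: the paper does not give a standalone proof of this lemma but simply presents it as the relativized version of Lemma~2.8 in \cite{HKSS02}, relying on the fact that properties (P0)--(P3) each produce $\mathbf{d}$-computable witnesses when the input graph presentation is $\mathbf{d}$-computable. Your write-up is in fact more detailed than what appears in the paper, and the backbone-plus-defining-family extension you outline is exactly the argument from \cite{HKSS02} that the paper is implicitly relativizing.
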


\begin{lemma}\label{lemma: relativized lemma 4}
    If $A_0$ and $A_1$ are $\mathbf{d}$-computable presentations of $\A$ and $h:A_0\to A_1$ is a $\mathbf{d}$-computable isomorphism, then there is a $\mathbf{d}$-computable isomorphism $\hat{h}:G_{A_0}\to G_{A_1}$.
\end{lemma}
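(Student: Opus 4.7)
The plan is to realize $\hat h$ as the natural restriction of $h$ to the coded subgraph inside each $A_i$, reorganized onto the domain $\omega$. The key observation is that since the relations $D(x)$ and $R(x,y)$ that carve the graph out of an $\A$-structure are intrinsically computable (in the sense relativized to $\mathbf{d}$), their interpretations $D^{A_0}, R^{A_0}, D^{A_1}, R^{A_1}$ are all $\mathbf{d}$-computable. Consequently the subsets $\{x \in |A_i| : D^{A_i}(x)\}$ are $\mathbf{d}$-computable, and listing each in increasing order (under the fixed identification of the domain of $A_i$ with $\omega$) yields $\mathbf{d}$-computable bijections $\iota_i : \omega \to \{x \in |A_i| : D^{A_i}(x)\}$. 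These identifications are what present the coded graphs $G_{A_0}$ and $G_{A_1}$ as $\mathbf{d}$-computable directed graphs with edge relation pulled back from $R^{A_i}$.

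Next I would define $\hat h : G_{A_0} \to G_{A_1}$ by
\[
\hat h(n) \ = \ \iota_1^{-1}\bigl(h(\iota_0(n))\bigr).
\]
Because $D$ is invariant under isomorphisms of $\A$-structures, $D^{A_0}(\iota_0(n))$ forces $D^{A_1}(h(\iota_0(n)))$, so $h(\iota_0(n))$ lies in the range of $\iota_1$ and $\hat h$ is well-defined; applying the same observation to $h^{-1}$ shows $\hat h$ is a bijection. Invariance of $R$ together with the fact that $h$ is an $\A$-isomorphism gives
\[
R^{G_{A_0}}(n,m) \iff R^{A_0}(\iota_0(n),\iota_0(m)) \iff R^{A_1}(h(\iota_0(n)), h(\iota_0(m))) \iff R^{G_{A_1}}(\hat h(n),\hat h(m)),
\]
so $\hat h$ preserves the edge relation of $\G$. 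Finally, $\hat h$ is $\mathbf{d}$-computable because it is the composition of the three $\mathbf{d}$-computable functions $\iota_0$, $h$, and $\iota_1^{-1}$.

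The only subtlety worth flagging is the distinction between the full domain $|A_i|$ on which $h$ acts and the coded subdomain $\{x : D^{A_i}(x)\}$ that carries the graph structure; conceptually the argument is just a restriction, but one must pass through the enumerations $\iota_i$ to land in the presentations $G_{A_i}$ with domain $\omega$. Once the invariance of $D$ and $R$ under $\A$-isomorphisms and the relativized intrinsic $\mathbf{d}$-computability of these relations are in hand, no approximation or priority argument is required, so the proof reduces to verifying the commuting diagram of $\mathbf{d}$-computable maps above — a direct calculation rather than a genuine construction.
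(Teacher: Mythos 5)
The paper does not prove this lemma directly: it cites the unrelativized Lemmas 2.6--2.9 of Hirschfeldt--Khoussainov--Shore--Slinko and asserts that they relativize, so there is no in-text proof to compare against. Your argument supplies the underlying proof and it is the right one: restrict $h$ to the coded subdomain carved out by $D$, then transport through the enumerations $\iota_i$ to land on the presentations $G_{A_i}$ with domain $\omega$. Two points are worth tightening. First, the paper's stated definition of ``intrinsically computable'' quantifies over computable isomorphisms from a fixed $\A$; what you actually need, and what the HKSS coding delivers (via computable $\Sigma_1$ and $\Pi_1$ definitions of $D$ and $R$), is the stronger property that $D^{A}$ and $R^{A}$ are $\deg(A)$-computable in \emph{every} copy $A$, with no appeal to a $\mathbf{d}$-computable isomorphism back to a base copy. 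This is what licenses the $\mathbf{d}$-computability of $\iota_0$, $\iota_1$, and hence of $\hat h$. Second, invariance as the paper defines it is preservation under automorphisms of $\A$, whereas you apply it to the isomorphism $h:A_0\to A_1$ between two different copies; to bridge this, note that because $D$ and $R$ are automorphism-invariant, the interpretations $D^{A_i}$, $R^{A_i}$ are well-defined by the isomorphism type of $A_i$ (any two isomorphisms $\A\to A_i$ differ by an automorphism of $\A$, which fixes $D$ and $R$ setwise), so any isomorphism between copies carries $D^{A_0}$ onto $D^{A_1}$ and respects $R$. Neither point undermines your argument --- both are standard and implicit in the HKSS machinery --- but they are exactly where the paper's local definitions are looser than what the proof uses, and a careful write-up should say so.
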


We now prove a key lemma with the help of these relativized lemmas.

\begin{lemma}\label{lemma: coding preserves categoricity relative to a degree}
    For any degree $\mathbf{d}$, $\G$ is computably categorical relative to $\mathbf{d}$ if and only if $\A$ is computably categorical relative to $\mathbf{d}$.
\end{lemma}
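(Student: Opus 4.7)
The plan is to prove both directions by the same template: apply the appropriate coding to convert $\mathbf{d}$-computable copies of one structure into $\mathbf{d}$-computable copies of the other, invoke the hypothesis there to obtain a $\mathbf{d}$-computable isomorphism, and then transfer this isomorphism back using the relativized versions of the HKSS lemmas already stated in this section. The key observation that makes the whole strategy go through is that both codings preserve $\mathbf{d}$-computability of presentations: the map $G \mapsto A_G$ does so by property (P0), and the map $A \mapsto G_A$ does so because the defining relations $D(x)$ and $R(x,y)$ are intrinsically computable, hence $\mathbf{d}$-computable in any $\mathbf{d}$-computable copy.

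For the forward direction, suppose $\G$ is computably categorical relative to $\mathbf{d}$, and let $A_0$ and $A_1$ be $\mathbf{d}$-computable copies of $\A$. The associated presentations $G_{A_0}$ and $G_{A_1}$ are then $\mathbf{d}$-computable copies of $\G$, so by hypothesis there is a $\mathbf{d}$-computable isomorphism $h \colon G_{A_0} \to G_{A_1}$. Lemma \ref{lemma: relativized lemma 3} lifts $h$ to a $\mathbf{d}$-computable isomorphism $\hat{h} \colon A_{G_{A_0}} \to A_{G_{A_1}}$, while Lemma \ref{lemma: relativized lemma 2} supplies $\mathbf{d}$-computable isomorphisms $\iota_0 \colon A_{G_{A_0}} \to A_0$ and $\iota_1 \colon A_{G_{A_1}} \to A_1$. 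Composing $\iota_1 \circ \hat{h} \circ \iota_0^{-1}$ (using that the inverse of a $\mathbf{d}$-computable isomorphism between $\mathbf{d}$-computable structures is again $\mathbf{d}$-computable) yields the required $\mathbf{d}$-computable isomorphism $A_0 \to A_1$.

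The backward direction is entirely symmetric. Given $\mathbf{d}$-computable copies $G_0$ and $G_1$ of $\G$, property (P0) gives $\mathbf{d}$-computable copies $A_{G_0}$ and $A_{G_1}$ of $\A$, so the hypothesis produces a $\mathbf{d}$-computable isomorphism $f \colon A_{G_0} \to A_{G_1}$; Lemma \ref{lemma: relativized lemma 4} lifts this to a $\mathbf{d}$-computable isomorphism $\hat{f} \colon G_{A_{G_0}} \to G_{A_{G_1}}$, and Lemma \ref{lemma: relativized lemma 1} provides $\mathbf{d}$-computable isomorphisms $G_{A_{G_i}} \to G_i$ to compose on either side. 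All of the substantive content is already encapsulated in Lemmas \ref{lemma: relativized lemma 1}--\ref{lemma: relativized lemma 4}, so once one records that the codings genuinely preserve $\mathbf{d}$-computability, the proof is a straightforward bookkeeping argument; I do not anticipate any real obstacle beyond verifying this preservation and keeping track of the directions of the isomorphisms when composing.
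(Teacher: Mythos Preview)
Your proposal is correct and follows essentially the same approach as the paper's proof: convert copies via the coding, apply the hypothesis, then use Lemmas \ref{lemma: relativized lemma 1}--\ref{lemma: relativized lemma 4} to transfer the isomorphism back, composing exactly as you describe. If anything, you are slightly more explicit than the paper in justifying why $A\mapsto G_A$ and $G\mapsto A_G$ preserve $\mathbf{d}$-computability.
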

\begin{proof}
    Fix the degree $\mathbf{d}$. Suppose $\G$ is computably categorical relative to $\mathbf{d}$. Let $A_0$ and $A_1$ be $\mathbf{d}$-computable presentations of $\A$. We then obtain $\mathbf{d}$-computable presentations $G_{A_0}$ and $G_{A_1}$ of $\G$. Since $\G$ is computably categorical relative to $\mathbf{d}$, there is a $\mathbf{d}$-computable isomorphism $h:G_{A_0}\to G_{A_1}$. By Lemma \ref{lemma: relativized lemma 3}, we have a $\mathbf{d}$-computable isomorphism $\hat{h}:A_{G_{A_0}}\to A_{G_{A_1}}$. Additionally, by Lemma \ref{lemma: relativized lemma 2}, there are $\mathbf{d}$-computable isomorphisms $f_0: A_{G_{A_0}}\to A_0$ and $f_1: A_{G_{A_1}}\to A_1$. It follows that $f_1\circ \hat{h}\circ f_0^{-1}:A_0\to A_1$ is a $\mathbf{d}$-computable isomorphism as needed.

    For the reverse direction, suppose $\A$ is computably categorical relative to $\mathbf{d}$. To show $\G$ is computably categorical relative to $\mathbf{d}$, we run an identical argument as above using Lemmas \ref{lemma: relativized lemma 1} and \ref{lemma: relativized lemma 4} in place of Lemmas \ref{lemma: relativized lemma 2} and \ref{lemma: relativized lemma 3}, respectively.
\end{proof}

Theorem \ref{thm: in other classes of structures} follows almost immediately.

\begin{proof}[Proof of Theorem \ref{thm: in other classes of structures}]
    Let $P$ be our computable poset where $P=P_0\sqcup P_1$ is a computable partition, let $\G$ be the computable directed graph which witnesses Theorem \ref{thm: poset result}, and let $h$ be the embedding of $P$ into the c.e.\ degrees. By Lemma \ref{lemma: coding preserves categoricity relative to a degree}, we have that since $\G$ is computably categorical, so is $\A$. We also have that because $\G$ is computably categorical relative to all $\mathbf{d}\in h(P_0)$ that $\A$ is computably categorical relative to all $\mathbf{d}\in h(P_0)$. Finally, since $\G$ is not computably categorical relative to any $\mathbf{d}\in h(P_1)$, it follows that $\A$ is also not computably categorical relative to any $\mathbf{d}\in h(P_1)$.
\end{proof}

\subsection{Linear orderings and Boolean algebras}
To end this section, we discuss two classes of structures in which the version of Theorem \ref{thm: in other classes of structures} where the structure is not computably categorical fails. That is, in both classes of structures, it is impossible to produce a computable structure $\mathcal{A}$ which is not computably categorical and a c.e.\ degree $\mathbf{d}$ such that $\mathcal{A}$ is computably categorical relative to $\mathbf{d}$. These observations illustrate the limitations of the techniques from \cite{villano2024computable} as a result of key structural facts about certain classes of structures, and not from the properties of certain degrees as with $2$-generics.

For computable linear orderings, Remmel in \cite{10.2307/2043534} proved the following.

\begin{theorem}[Remmel \cite{10.2307/2043534}]
    A computable linear order is computably categorical if and only if it has finitely many adjacent pairs.
\end{theorem}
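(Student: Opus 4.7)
The proof splits into the two directions; adopt the convention that an \emph{adjacent pair} in a linear order $L$ is a pair $(a,b)$ with $a<b$ and no element of $L$ strictly between.

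For the $(\Leftarrow)$ direction, suppose $L$ has exactly $n$ adjacent pairs. The plan is to show that any two computable copies $L_1,L_2$ of $L$ admit a computable isomorphism built by back-and-forth. The isomorphism-invariant decomposition of $L$ has the form $B_0+[a_1,b_1]+B_1+\cdots+[a_n,b_n]+B_n$, where each $[a_i,b_i]$ is an adjacent pair and each $B_i$ is a (possibly empty or singleton) convex block whose interior contains no adjacency and is therefore densely ordered, with an endpoint configuration determined by whether it is flanked by adjacent pairs or by $\pm\infty$. During the back-and-forth, I would match an incoming element $x$ of one copy to an image $y$ in the correct block of the other, placing $y$ ``deep'' inside the currently available interval rather than at a potential boundary; in this way only finitely many boundary decisions --- those tied to the $n$ global adjacencies --- must ever be committed irrevocably. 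The remaining work inside each block reduces to the classical computable back-and-forth for computable dense linear orders with prescribed endpoints, which is well known to yield a computable isomorphism; splicing these together on the finitely many blocks gives the desired map.

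For the $(\Rightarrow)$ direction, suppose $L$ has infinitely many adjacent pairs. I would construct a computable $\hat L\cong L$ and diagonalize against each $\Phi_e$ as a candidate isomorphism from $L$ to $\hat L$. The key observation is that the predicate ``$z$ has an immediate successor in $L$'' is $\Sigma^0_2$, hence in general not computable, while I retain complete control over where to place elements --- and therefore where adjacencies appear --- in $\hat L$. For each requirement $R_e$ I would designate a follower $\hat z_e\in\hat L$ and wait for $\Phi_e$ to commit a preimage $\Phi_e^{-1}(\hat z_e)=z$ in $L$. Depending on whether $z$ eventually acquires an immediate successor in $L$, I arrange that $\hat z_e$ either does or does not acquire an immediate successor in $\hat L$ by deciding whether a new element is inserted just above $\hat z_e$; this contradicts the requirement that $\Phi_e$ transport the successor relation. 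Because $L$ has infinitely many adjacent pairs, there is always enough freedom in populating $\hat L$ to keep its final order type isomorphic to $L$ regardless of the local decisions.

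The principal obstacle is the coordination inside the $(\Rightarrow)$ construction: each $R_e$ acts on a $\Sigma^0_2$ event about $L$ whose final status is only confirmed with unbounded delay, while simultaneously the global order type of $\hat L$ must remain isomorphic to $L$. The standard resolution is a finite-injury priority argument that reserves ``slack'' regions of $\hat L$ to absorb later insertions and that orders the $R_e$ so higher priority requirements can redefine lower ones only finitely often. In the $(\Leftarrow)$ direction the subtlety is that the locations of the $n$ adjacent pairs in $L_j$ are never known effectively --- adjacency is only $\Pi^0_1$ --- so the back-and-forth must commit to images without certainty as to which committed pairs will turn out to be adjacent; one manages this by placing each image in a generic interior position whenever possible and appealing to the bound $n$ to ensure that only finitely many exceptional commitments need to be made. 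Both issues are handled by Remmel's original techniques.
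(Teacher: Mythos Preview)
The paper does not prove this theorem; it merely states it with attribution to Remmel and invokes it as a black box in the discussion of linear orderings. There is therefore no proof in the paper against which to compare your proposal.

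For what it is worth, your sketch follows the standard shape of Remmel's argument. Two small corrections: in the $(\Leftarrow)$ direction your decomposition $B_0+[a_1,b_1]+B_1+\cdots+[a_n,b_n]+B_n$ tacitly assumes the adjacent pairs are pairwise disjoint and linearly arranged, whereas they may share endpoints (a three-element chain has two overlapping adjacent pairs), so the correct picture is that the finite set of elements appearing in any adjacency cuts $L$ into finitely many convex pieces, each dense, a singleton, or empty; and in the $(\Rightarrow)$ direction the phrase ``$\Phi_e^{-1}(\hat z_e)=z$'' has the map the wrong way round, since $\Phi_e$ goes from $L$ to $\hat L$. With those adjustments the outline is sound, though the coordination you flag in the $(\Rightarrow)$ direction---keeping $\hat L\cong L$ while spoiling each $\Phi_e$ on an adjacency---is where the real content lies and is only gestured at here.
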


Here, we say that for a linear ordering $(L,\leq)$, two elements $x,y$ are an \textit{adjacent pair} if either $x<y$ or $y<x$ and there is no $z\in L$ such that $x<z<y$ or $y<z<x$. Remmel's result also fully characterizes relative computable categoricity for computable linear orderings. So, if $L$ is a computable linear ordering which is computably categorical, then it must be computably categorical relative to all degrees $\mathbf{d}$, and so we cannot create a c.e.\ degree $\mathbf{d}$ such that $L$ is not computably categorical relative to $\mathbf{d}$.

Remmel's construction in the only if direction of his result also relativizes to any set, so for linear orderings which are not computably categorical, they must also fail to be categorical relative to all degrees $\mathbf{d}$. This differs greatly from what happens in the classes of structures listed in Theorem \ref{thm: in other classes of structures}, where computable examples could fail to be computably categorical but still be categorical relative to some c.e.\ degrees.

We now consider the class of computable Boolean algebras. For these structures, we have the following results.

\begin{theorem}[Goncharov \cite{Gon75b}]\label{thm: boolean algebra characterization of relatively c.c.}
    A computable Boolean algebra is computably categorical if and only if it has finitely many atoms.
\end{theorem}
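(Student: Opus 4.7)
The plan is to prove the two directions separately. For the ``if'' direction (finitely many atoms implies computably categorical), the argument reduces to the computable categoricity of the atomless Boolean algebra combined with a finite matching of atoms. Assume $B$ has exactly $n$ atoms. In any computable copy of $B$, the atoms can be computably located by searching for $n$ pairwise disjoint nonzero elements and, using the known count, certifying that each is an atom and that their complement is atomless. Given two computable copies $B_0, B_1$ of $B$, I would first match their atoms by any bijection to obtain a partial isomorphism on the finite subalgebras they generate, and then run the standard effective back-and-forth on the atomless complements: at each stage extend the current partial isomorphism to include the next element of $B_0$ (and then of $B_1$), using the fact that in an atomless Boolean algebra every nonzero element splits into two nonzero pieces, so the required back-and-forth witnesses can always be produced effectively.

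For the ``only if'' direction, suppose $B$ has infinitely many atoms. I would build two computable copies $B_0$ and $B_1$ of $B$ such that no computable function is an isomorphism $B_0 \to B_1$. In $B_0$, arrange the presentation so that the set of atoms is computable (possible because having infinitely many atoms gives us room to enumerate them with controlled delay). In $B_1$, arrange the atoms to form a $\Pi^0_1$ set that is not c.e., by diagonalizing against every c.e.\ set $W_e$ with a requirement $R_e : W_e \neq \mathrm{Atoms}(B_1)$: reserve a fresh candidate $x_e$ in $B_1$, wait to see whether $x_e$ enters $W_e$, and if it does, split $x_e$ into two nonzero pieces (so $x_e$ is no longer an atom), otherwise let $x_e$ remain an atom forever. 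Any computable isomorphism $\varphi : B_0 \to B_1$ would satisfy $\mathrm{Atoms}(B_1) = \varphi(\mathrm{Atoms}(B_0))$, making $\mathrm{Atoms}(B_1)$ c.e.\ from the computable set $\mathrm{Atoms}(B_0)$ and the computable $\varphi$, contradicting the diagonalization.

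The main obstacle is maintaining both copies as isomorphic to $B$ while each carries its distinct complexity of atoms. This is precisely where the hypothesis of infinitely many atoms is essential: only because $B$ has infinitely many atoms does it have enough slack to present one copy with computable atoms and another with non-c.e.\ atoms. In the construction of $B_1$, whenever a requirement decides to split a witness $x_e$, there must be available atoms in $B$ (equivalently, sufficient atomless slack below $x_e$) to realize the split, and the infinite supply of atoms guarantees this throughout the construction. One then verifies that both $B_0$ and $B_1$ are computable Boolean algebras isomorphic to $B$, and that any total computable function $B_0 \to B_1$ would push the computable set $\mathrm{Atoms}(B_0)$ forward to a c.e.\ copy of $\mathrm{Atoms}(B_1)$, contradicting the diagonalization; hence no computable isomorphism exists, and $B$ is not computably categorical.
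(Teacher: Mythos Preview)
The paper does not prove this theorem; it is stated as a cited result of Goncharov and used without proof, so there is no in-paper argument to compare against. A few remarks on your sketch on its own merits.

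For the ``if'' direction, the overall strategy (split off the finite atomic part, run the effective back-and-forth on the atomless complement) is correct, but your justification for ``computably locating'' the atoms does not work: being an atom is $\Pi^0_1$, and $n$ pairwise disjoint nonzero elements found by search may well lie in the atomless part. The clean fix is simply that a finite set of natural numbers is trivially computable, so in each fixed copy the $n$ atoms can be hard-coded non-uniformly into the isomorphism; computable categoricity does not require uniformity in the copies.

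For the ``only if'' direction there is a genuine gap. First, you assert $B_0$ can be presented with computable atom set, but atoms form a $\Pi^0_1$ (not $\Sigma^0_1$) set, so there is nothing to ``enumerate with controlled delay,'' and for arbitrary $B$ this claim is not obvious and is not argued. Second, and more seriously, your construction of $B_1$ does not explain how $B_1 \cong B$ is maintained: when you split $x_e$, the two halves must be realizable in $B$ compatibly with the finite part of $B_1$ already committed, and when you leave $x_e$ an atom, an actual atom of $B$ must be available in the right position. The phrase ``the infinite supply of atoms guarantees this'' hides a genuine finite-injury priority argument in which one carries a moving partial isomorphism from $B_1$ into a fixed master copy of $B$. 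The standard proofs (Goncharov; see also La~Roche and Remmel) diagonalize directly against each $\Phi_e$ while maintaining such partial isomorphisms for both copies; your complexity-of-atom-set shortcut could perhaps be made to work, but not without substantially more argument for general $B$.
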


This fully characterizes relative computable categoricity for computable Boolean algebras. Like with computable linear orderings, if a computable Boolean algebra $\B$ is computably categorical, then it must be computably categorical relative to all degrees $\mathbf{d}$. That is, no computable computably categorical Boolean algebra can witness the nonmonotonic behavior of computably categoricity relative to a degree. Additionally, Bazhenov showed the following.

\begin{theorem}[Bazhenov \cite{Bazhenov14}]\label{thm: d-categoricity of Boolean algebras}
    For every degree $\mathbf{d}<\mathbf{0}'$, a computable Boolean algebra is $\mathbf{d}$-computably categorical if and only if it is computably categorical.
\end{theorem}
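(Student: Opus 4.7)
The plan is to establish the non-trivial direction: if $\mathcal{B}$ is not computably categorical, then $\mathcal{B}$ is not $\mathbf{d}$-computably categorical for any $\mathbf{d}<\mathbf{0}'$. The reverse direction is immediate from Theorem \ref{thm: boolean algebra characterization of relatively c.c.}: a computably categorical Boolean algebra has only finitely many atoms, and in any $\mathbf{d}$-computable copy the atoms form a $\Pi^0_1$ set of fixed finite cardinality (hence computable), while the surrounding atomless summand is itself computably categorical; locating the atoms $\mathbf{d}$-effectively and matching the atomless part then yields a $\mathbf{d}$-computable isomorphism.

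So suppose $\mathcal{B}$ is not computably categorical. By Theorem \ref{thm: boolean algebra characterization of relatively c.c.}, $\mathcal{B}$ has infinitely many atoms. The goal is to exhibit two computable copies $\mathcal{B}_0, \mathcal{B}_1$ of $\mathcal{B}$ such that every isomorphism $f\colon\mathcal{B}_0\to\mathcal{B}_1$ satisfies $f\geq_T\mathbf{0}'$. Once this is done, any $\mathbf{d}$-computable isomorphism would give $\mathbf{0}'\leq_T \mathbf{d}$, contradicting $\mathbf{d}<\mathbf{0}'$, so no such isomorphism exists and $\mathcal{B}$ is not $\mathbf{d}$-computably categorical.

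For the construction, fix an effective enumeration of the halting set $K\equiv_T \mathbf{0}'$. Build $\mathcal{B}_0$ so that its atom relation is computable: whenever a new atom is designated at a stage, it is never split later. Build $\mathcal{B}_1$ so that the atom relation codes $K$: reserve a uniformly computable list of ``candidate atoms'' $x_0, x_1, \ldots$, each of which initially looks like an atom, and when $n$ enters $K$ at some stage, split $x_n$ by enumerating new elements strictly between $0$ and $x_n$ so that $x_n$ becomes a non-atom forever. Then $\{y\in\mathcal{B}_1 : y\text{ is an atom of }\mathcal{B}_1\}$ Turing-computes $\overline{K}$ and so has degree $\mathbf{0}'$. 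Any isomorphism $f\colon\mathcal{B}_0\to\mathcal{B}_1$ preserves atomhood, so atomhood in $\mathcal{B}_1$ is decidable from $f$ together with the computable atom relation on $\mathcal{B}_0$, giving $f\geq_T\mathbf{0}'$ as required.

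The main obstacle is ensuring that both copies really are isomorphic to $\mathcal{B}$ despite the dynamic splittings in $\mathcal{B}_1$. This requires careful matching of the isomorphism invariants of $\mathcal{B}$ --- its atomless quotient type, its atomic summand, and the Vaught/Ketonen invariants on the atomic ideal --- so that each split in $\mathcal{B}_1$ can be absorbed without changing the ambient invariant, and so that $\mathcal{B}_0$'s rigid atom structure produces the same invariants in the limit. Because $\mathcal{B}$ has infinitely many atoms, one can designate an infinite reservoir of slots to serve as the $x_n$'s, so that splitting any subset of them still leaves the atomic summand of the correct invariant type and does not disturb the atomless filler; the symmetric reservoir scheme in $\mathcal{B}_0$ keeps the two presentations matched. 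The technical heart of the argument is carrying out this bookkeeping in full generality, including the case when $\mathcal{B}$ has mixed atomic/atomless structure at several Ketonen levels.
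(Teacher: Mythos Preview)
The paper does not prove this theorem; it is quoted as Bazhenov's result with a citation to \cite{Bazhenov14} and used only as a black box in the discussion of Boolean algebras. So there is no ``paper's own proof'' to compare against, and your write-up is being measured against the actual content of Bazhenov's argument rather than anything in this paper.

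On the merits: your easy direction is overcomplicated and slightly confused. The definition of $\mathbf{d}$-computable categoricity quantifies only over \emph{computable} copies, not $\mathbf{d}$-computable ones, so there is no need to discuss ``any $\mathbf{d}$-computable copy.'' If $\mathcal{B}$ is computably categorical, then between any two computable copies there is already a computable isomorphism, which is trivially $\mathbf{d}$-computable; nothing about atoms or atomless summands is needed.

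For the hard direction, your high-level strategy---produce two computable copies $\mathcal{B}_0,\mathcal{B}_1$ so that every isomorphism computes $\mathbf{0}'$---is the right shape, and is indeed how results of this type are established. But the part you label ``bookkeeping'' is the entire content of the proof, not a detail to be waved at. Two specific gaps: first, you assert that $\mathcal{B}_0$ can be built with a computable atom relation while remaining isomorphic to the given $\mathcal{B}$; this is a nontrivial theorem in its own right and you have not indicated how to achieve it for an arbitrary computable Boolean algebra with infinitely many atoms. Second, your description of $\mathcal{B}_1$ (``split $x_n$ when $n$ enters $K$'') does not explain why the resulting algebra stays isomorphic to $\mathcal{B}$---if $x_n$ was meant to be an atom, splitting it changes the isomorphism type, and your appeal to ``absorbing'' splits via Ketonen invariants and reservoir schemes is a promissory note, not an argument. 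Bazhenov's actual proof handles exactly these issues, and they are where the work lies.
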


By Bazhenov's result, if $\mathcal{B}$ is not computably categorical, then for all degrees $\mathbf{d}<\mathbf{0}'$, $\mathcal{B}$ is not $\mathbf{d}$-computably categorical. This implies that $\mathcal{B}$ is not computably categorical relative to any $\mathbf{d}<\mathbf{0}'$, and so Theorem \ref{thm: in other classes of structures} also fails for the class of computable Boolean algebras.

In light of the observations made above for linear orderings and Boolean algebras, we pose the following question.

\begin{question}
    For other classes of structures with an algebraic characterization of relative computable categoricity, does there exist an example of a structure which is not computably categorical but is computably categorical relative to some degree $\mathbf{d}>\mathbf{0}$?
\end{question}

The list of structures in this section is not exhaustive, and there are several examples in the literature in which computable categoricity and relativized computable categoricity coincide. One such example is the class of trees, in which all trees with finite height are computably categorical and relatively computably categorical \cite{Lempp05}. For those classes of structures, it would be interesting to know if their limiting behavior in regards to categoricity relative to a degree already occurs at the level of $\mathbf{0}$.

\printbibliography

\end{document}